\newtheorem{lem}{Lemma}[section]
\newtheorem{cor}{Corollary}[section]
\newtheorem{pro}{Proposition}[section]
\newtheorem{theo}{Theorem}[section]
\theoremstyle{remark}
\newtheorem{rem}{Remark}
\newcommand{\C}{\mathcal{C}}
\renewcommand{\L}{\mathscr{L}}
\renewcommand{\P}{\mathbb{P}}
\newcommand{\e}{\varepsilon}
\newcommand{\llangle}{\left\langle}
\newcommand{\rrangle}{\right\rangle}
\newcommand{\E}{\mathbb{E}}
\newcommand{\I}{\mathbf{1}}
\newcommand{\R}{\mathbb{R}}
\newcommand{\N}{\mathbb{N}}
\DeclareMathOperator*{\argmin}{\arg\min}
\newcommand{\Bigpar}[1]{\Bigl(#1\Bigr)}
\newcommand{\bigpar}[1]{\bigl(#1\bigr)}
\newcommand{\biggpar}[1]{\biggl(#1\biggr)}
\newcommand{\bigcro}[1]{\bigl[#1\bigr]}
\newcommand{\biggcro}[1]{\biggl[#1\biggr]}
\let\phi=\varphi
\definecolor{vert1}{rgb}{0.0, 0.5, 0.0}
\title{On principal curves with a length constraint}
\author{Sylvain Delattre \& Aurélie Fischer\footnote{The research work of this author has been partially supported by the French National Research Agency via the TopData project ANR-13-BS01-0008}}
\newcommand{\Addresses}{{
		\vspace{1.5cm}
		\footnotesize
				\noindent Sylvain Delattre \& Aurélie Fischer\\ Laboratoire de Probabilités et Modèles Aléatoires\\
			Université Paris Diderot\\Bâtiment Sophie Germain\\
			Case courrier 7012\\75205 Paris Cedex 13, France
		
}}
\begin{document}
	\maketitle

	\begin{abstract}
			In this paper, we are interested in the problem of finding a parametric curve $f$ minimizing the quantity  $\E\left[\min_{t\in[0,1]}\|X-f(t)\|^2\right]$, where $X$ is a random variable,  under a length constraint. This question is known in the probability and statistical learning  context as length-constrained principal curves optimization, as introduced by \cite{KKLZ}, and it also corresponds to a version of the ``average-distance problem'' studied in the calculus of variation and shape optimization community (\cite{BOS, BS03}).
	
	We investigate the theoretical properties satisfied by a  principal curve $f:[0,1]\to\R^d$ with length at most $L$ associated to a probability distribution with second-order moment. We suppose that the probability distribution is not supported on the image of a curve with length $L$. Studying open as well as closed optimal curves, we show  that they have finite curvature.		We also derive  a  first order Euler-Lagrange equation. This equation is then used to show that a  length-constrained principal curve in two dimension  has no multiple point. Finally, some  examples of optimal curves are presented.
	\end{abstract}

\textit{Keywords} -- Principal curves, average-distance problem, quantization of probability measures, length constraint, finite curvature.

\textit{2000 Mathematics Subject Classification}: \textit{Primary} 60E99; \textit{Secondary} 35B38, 49Q10, 49Q20.

\section{Introduction}

\subsection{Context of the problem and motivation}

We focus on the problem:
	\begin{equation}
\begin{minipage}{0.9\textwidth}
find a  curve $f:[0,1]\to \R^d$ minimizing the quantity  $$
\E\left[d(X,\mbox{Im} f)^2\right]=\int d(x,\mbox{Im} f)^2d\mu(x),
$$
over all curves with length $\L(f)$, such that $\L(f)\leq L$.
\end{minipage}\label{eq:crit1}
\end{equation}
 Here, $d(\cdot,\cdot)$ is the Euclidean distance from a point
 to a set, $\mbox{Im} f$ is the image of $f$, and $X$ is some random vector with distribution $\mu$, taking its values in $\R^d$. As an illustration, two examples of length-constrained principal curves, fitted via a stochastic gradient descent algorithm, are presented in Figure \ref{fig:exPC}.
 \begin{figure}
 	\centering\includegraphics[width=0.5\textwidth]{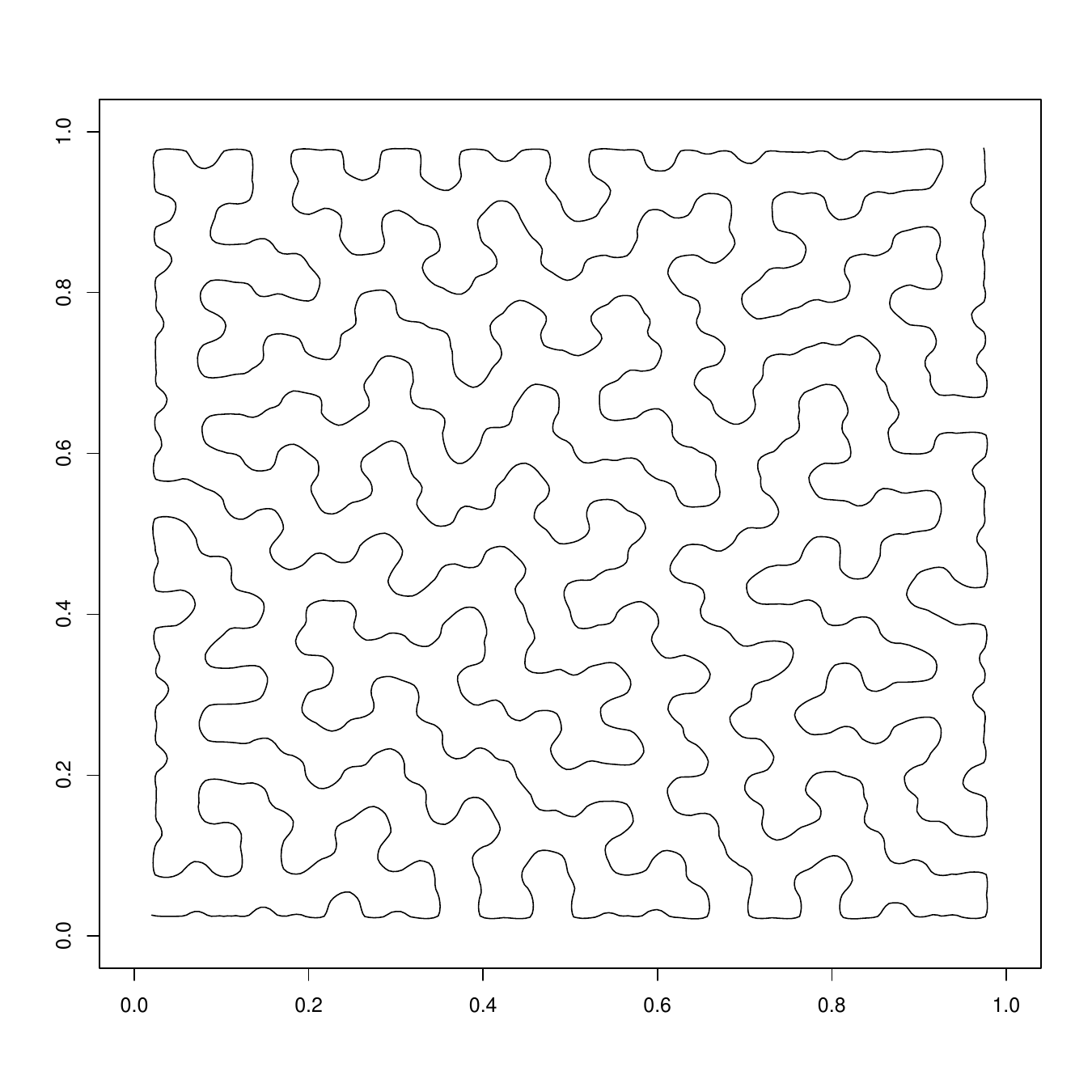}\hfill
 	\includegraphics[width=0.5\textwidth]{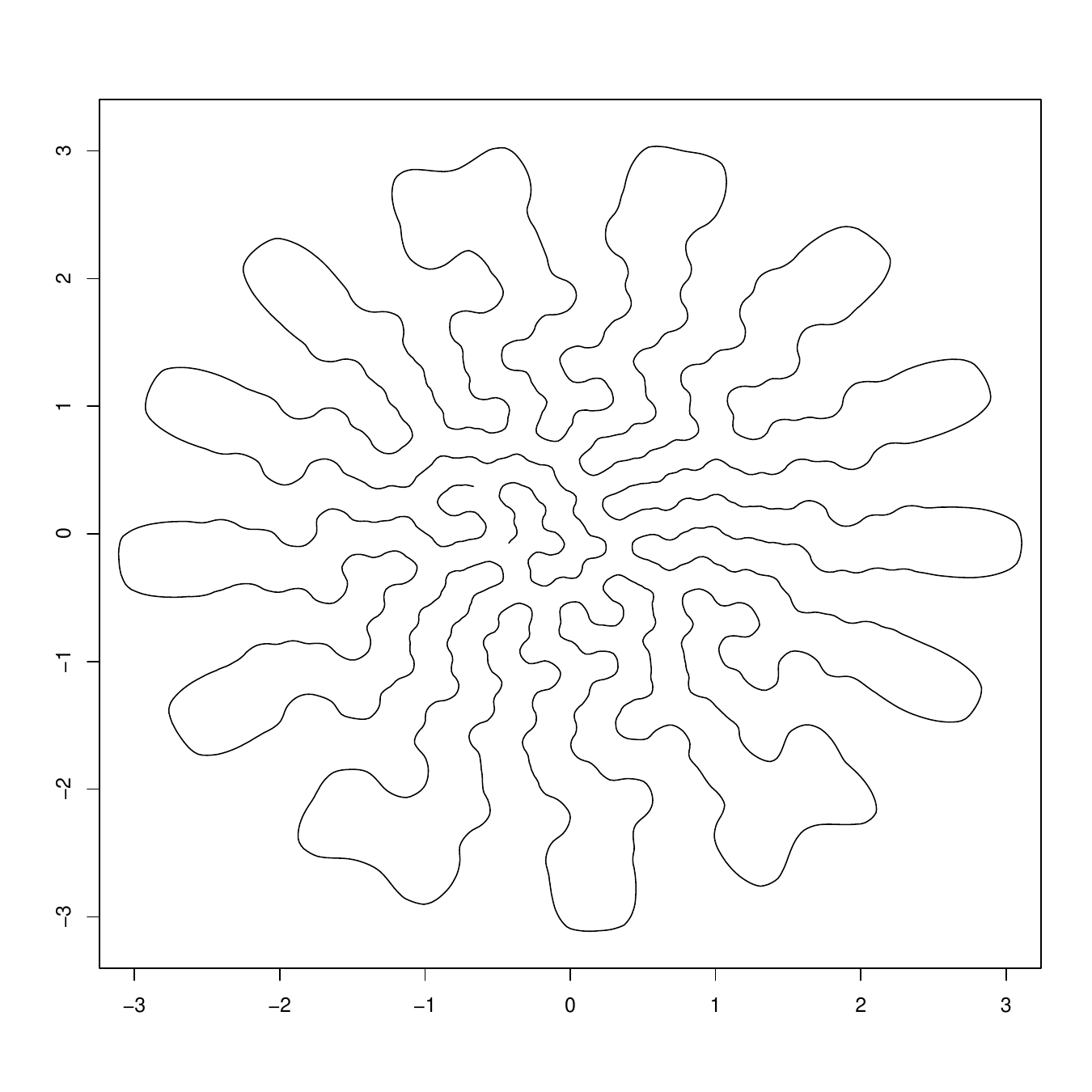}
 	\caption{Two examples of principal curves with length constraint: (a) Uniform distribution over the square $[0,1]^2$. (b) Standard Gaussian distribution.}\label{fig:exPC}
 \end{figure}
This corresponds to principal curves with length constraint, as described  by  \cite{KKLZ}. These authors show that there  exists indeed a minimizer  whenever $X$ is square integrable.
Observe that such a length constraint makes perfectly sense in the empirical case, that is in the statistical framework, when the random vector is replaced by a data cloud. Indeed, from a  practical point of view, it is essential to appropriately tune some parameter reflecting the complexity of the curve, in order to achieve a trade-off between a curve passing through all data points and a too rough one. The parameter selection issue was addressed in this statistical context for instance in \cite{BF}, \cite{AF} and \cite{GW}.

Originally, principal curves were introduced by \cite{HasStuet}, with a different definition,  based on the so-called self-consistency property.  In this point of view, a  curve $f$ is said to be self-consistent for a random  vector $X$ with finite second moment if it satisfies: $$f(t_f(X))=\E[X|t_f(X)]\quad \mbox{a.s.} ,$$ where the projection index $t_f$ is given by $$t_f(x)=\max\argmin_{t} \|x-f(t)\| .$$
The self-consistency property may be interpreted as follows: each point on the curve is the average of the mass of the probability distribution projecting there (for more details about the notion of self-consistency, see  \cite{TarFl}). Some regularity assumptions are made in addition: the  principal curve is required to be smooth ($C^\infty$), it does not intersect itself, and has finite length inside any ball in $\R^d$.
 The existence of principal curves designed	according to this definition cannot be proved in general (see \cite{DSextr96}, \cite{DSgeo96} for results obtained in the case of some particular distributions in two dimensions), which is the main  motivation for the least-square minimization definition proposed by \cite{KKLZ}.

Note that several other principal curve definitions, as well as algorithms, were proposed in the literature (\cite{Tib}, \cite{VVK}, \cite{Del},  \cite{Sankulk}, \cite{ETE}, \cite{OE},  \cite{GW}). Note also that principal curves, in their empirical version, have many applications in various areas (see for example \cite{HasStuet}, \cite{FO} for applications in physics, \cite{KK}, \cite{RN} in character and speech recognition, \cite{Brun}, \cite{SR}, \cite{BanRaf},  \cite{ETE, ETE1} in mapping and geology, \cite{De}, \cite{CAM}, \cite{ETE} in natural sciences, \cite{CCDH} in pharmacology, and \cite{WC},  \cite{DSD} in medicine, for the study of cardiovascular disease or cancer).

\subsection{Description of our results}

In this paper, we consider general distributions, assuming only that $X$ has a second order moment, and search for a   curve  which is optimal for problem  \eqref{eq:crit1}. We deal with open curves (with endpoints), as well as closed curves ($f(0)=f(1)$). Throughout, we will assume that the length-constraint is effective, that is the support of $X$ is not the image of a curve with length less than or equal to $L$. In this context, we prove that a minimizing curve cannot be self-consistent.
We also show that, for an optimal curve, the set of points with several different projections of the curve, called  ridge set in studies about the ``average-distance problem'' (see Section \ref{section:CompAna}), or ambiguity points in the principal curves literature, is negligible for the distribution of $X$.
 Then, we establish  that an optimal curve  is right- and left-differentiable everywhere and has bounded curvature.  Moreover, we obtain a first order Euler-Lagrange equation: we show that there exist $\lambda>0$ and a random variable $\hat{t}$ taking its values in $[0,1]$ such that  $\|X-f(\hat{t})\|=d(X,\mbox{Im} f)$ a.s. and 
\begin{equation}\label{eq:form1}
\E\left[ X-f(\hat{t})|\hat{t}=t\right]m_{\hat{t}}(dt)=-\lambda f ''(dt),
\end{equation} where $m_{\hat{t}}$ stands for the distribution of $\hat{t}$. To obtain that $\lambda \neq 0$, we use the fact that an optimal curve is not self-consistent.
Formula \eqref{eq:form1} allows us to propose in  dimension  $d=2$ a proof of the injectivity of an open principal curve as well as of a closed principal curve restricted to $[0,1)$.

\subsection{Comparison with previous results}\label{section:CompAna}

	Our framework is related to the  constrained  problem:
		\begin{equation}
	 \begin{minipage}{0.9\textwidth}
		minimize 
	$ \displaystyle \int_{\R^d}d(x,\Sigma)^pd\mu(x)$ over compact connected sets $\Sigma$ such that $\mathcal H^1(\Sigma)\leq L$.
	\end{minipage}\label{eq:irrigcons}
	\end{equation}

Here, $\mathcal H^\ell$ denotes $\ell$-dimensional Hausdorff measure. A connected question is the minimization of the penalized version of the criterion:
	 \begin{equation}
	\int_{\R^d}d(x,\Sigma)^pd\mu(x)+\lambda\mathcal H^1(\Sigma).\label{eq:irrigpen}
	\end{equation}

This issue, called in the calculus of variations and shape optimization community ``average-distance problem'' or, for $p=1$, ``irrigation problem'', has been  introduced by \cite{BOS,BS03} (see also the survey \cite{survAL}, and the references therein). Considering a compactly supported distribution, the penalized form is  studied for connected sets,  with $p=1$, in \cite{LuSle13}, and for curves, with $p\geq 1$, in \cite{LuSle}. In the first article, the authors prove that a minimizer is a tree made of a finite union of curves with finite length, and they provide a bound on the total curvature of these curves. In the second one, they show existence of a curve minimizing the penalized criterion
\begin{equation}\label{eq:lengthpen}
\int_{\R^d}d(x,\mbox{Im} f)^pd\mu(x)+\lambda\mathcal \L(f).
\end{equation} They give a bound on the curvature of the minimizer, and  prove that, in two dimensions, if  $p\geq 2$ or the distribution $\mu$ has a bounded density with respect to Lebesgue measure,  a minimizing curve is injective.

For the penalized irrigation problem \eqref{eq:irrigpen}, under the assumption that the distribution $\mu$, with compact support, does not charge the sets that have finite  $\mathcal H^{d-1}$ measure, which is true for instance if it has a density with respect to Lebesgue measure, an Euler-Lagrange equation is obtained for $p=1$ in \cite{BMS}, whereas \cite{LemenantR2} uses arguments involving  endpoints to derive one  in the case of the constrained version \eqref{eq:irrigcons}, in $\R^2$, under the same assumption on $\mu$. This assumption implies that $X$ is almost surely different from its projection on the curve, which is required for differentiability when $p= 1$, and, moreover, it is used to ensure negligibility of the ridge set.

For the constrained problem \eqref{eq:irrigcons}, if $\Sigma^*$ denotes a minimizer and  $\int_{\R^d}d(x,\Sigma)^pd\mu(x)>0$, it is shown in \cite{PaoSte} that $\mathcal H^1(\Sigma^*)=L$. A similar result in our context is stated in Corollary \ref{cor:L(f)} below.

Another related setting is the ``lazy travelling salesman problem'' of \cite{PoWo}: in $\R^2$, taking for $\mu$ an empirical distribution and considering closed curves, the authors study the penalized problem \eqref{eq:lengthpen} for $p=2$ (with $\lambda \L(f)$ replaced by  $\lambda \L^2(f)$). They show that for $\lambda$ large enough, the problem is reduced to a convex optimization.

Recall that we study in this manuscript the constrained problem \eqref{eq:crit1}, for open or closed  curves. In our context, the distribution of $X$ is not required to be compactly supported, and we do not need to assume that $\mu$ does not charge the sets with finite  $\mathcal H^{d-1}$ measure to derive an Euler-Lagrange equation. Indeed,  our proof does not rely on the fact that the ridge set is negligible. Besides, we prove that ambiguity points are actually negligible, which implies in particular that, for a given optimal curve, the Lagrange multiplier $\lambda$ in equation \eqref{eq:form1} only depends on the curve $f$.
We decided to focus on the case $p=2$
for which we can state the more complete  results.
In particular, we are only able to show the default of 
self-consistency of an optimal curve when  $p=2$. As already mentioned, this is a key point to get the main result.
Observe that it would be interesting to define a counterpart of the default of self-consistency when considering  other values of $p$.

\subsection{Organization of the paper}

Our document is organized as follows.
Section \ref{section:not} introduces relevant notation and recalls some basic facts about length-constrained principal curves. 
In Section \ref{section:res}, negligibility of ambiguity points is given in Proposition \ref{prop:hatXneg}, and the main result is stated in his complete form  in Theorem \ref{theo:main}.

Injectivity results are presented in \ref{section:inj}.
Finally, we give in Section \ref{section:exe} explicit examples of optimal curves.

\section{Definitions and notation}\label{section:not} 
For $d\geq 1$, the space $\R^d$ is equipped with the standard Euclidean norm, denoted by $\|\cdot\|$. The associated  inner product between two elements $u$ and $v$ is denoted by $\langle u,v\rangle$. Let $\mathcal H^1$ denotes the 1-dimensional Hausdorff measure in $\R^d$.

For $x\in\R^d$, $A\subset \R^d$, let $ d(x, A)=\inf_{y\in A}\|x-y\|$ denote the distance from point $x$ to set $A$.
For $r>0$, let $B(x,r)$ and $\bar B(x,r)$ denote, respectively, the open and the closed balls with center $x$ and radius $r$. Also, let $\partial A$ stand for the boundary of $A$, $\mbox{Card}(A)$ for its cardinality, and $\mbox{diam}(A)=\sup_{x,y\in A}\|x-y\|$  for its diameter.

For every $x\in \R^d$, let $x^j$ be its $j$-th component, for $j=1,\dots,d$, that is $x=(x^1,\dots,x^d)$. For every $x=(x^1,\dots,x^d)\in \R^d$, we set $\|x\|_\infty=\max_{1\leq i\leq d}|x^j|$.

 Let $(\Omega,\mathcal F, \P)$ be a probability space and $X$  a random vector on $(\Omega,\mathcal F, \P)$ with values in $\R^d$, such that $\E[\|X\|^2]<\infty$.
We will consider curves, that are continuous functions
\begin{align*}
f:[0,1]&\to\R^d\\
t &\mapsto (f^1(t),\dots,f^d(t)).
\end{align*} 
For such a curve $f:[0,1]\to\R^d$, let $\L(f)\in[0,\infty]$ denote its length, defined by
\begin{equation}
\L(f)=\sup\sum_{i=1}^n\|f(t_i)-f(t_{i-1})\|,\label{eq:length}
\end{equation}
where the supremum is taken over all possible subdivisions $0=t_0\leq \dots\leq t_n=1$, $ n\geq 1$ (see, e.g., 
\cite{AlexRes}). Let $\mbox{Im} f$ denote the image of $f$.

Let $$\Delta(f)=\E\left[d(X,\mbox{Im}f)^2\right],$$
and, for $L\geq 0$, $$G(L)=\min\{\Delta(f),f\in\mathcal C_L\},$$
where, in the sequel,  $\mathcal C_L$ will denote either one of the following sets of curves:  
\begin{align*}
&\{f\in[0,1]\to\R^d,\L(f)\leq L\},\\
&\{f\in[0,1]\to\R^d,\L(f)\leq L,f(0)=f(1)\}.
\end{align*}
Curves belonging to the latter set are closed curves. Note that $G$ is well-defined. Indeed, \cite{KKLZ} have shown the existence of an open curve $f$ with $\L(f)\leq L$ achieving the infimum of the criterion $\Delta(f)$, and the same proof applies for closed curves.

It will be useful to rewrite $ G(L)$, for every $L\geq 0$,
 as the minimum of the quantity $$\E[\|X-\hat X\|^2]$$ over all possible random vectors $\hat X$ taking their values in the image $\mbox{Im} f$ of a curve $f\in\mathcal C_L$.

\begin{rem}If $f:[0,1]\to\R^d$ is Lipschitz with constant $L$, its length is at most $L$. This follows directly from the definition of the length \eqref{eq:length}.
	Conversely, if the curve $f:[0,1]\to\R^d$ has length $\L(f)\leq L$, then there exists a curve with the same image which is Lipschitz with constant $L$. Indeed, a curve with finite length may be parameterized by arc-length (1-Lipschitz) (see, e.g., \citet[Theorem 2.1.4]{AlexRes}). 
\end{rem}

\begin{rem}
	Let $L\geq 0$. Suppose that $\hat{X}$ satisfies $G(L)=\E[\|X-\hat{X}]\|^2]$. Writing 
$$
\E[\|X-\hat X\|^2]=\E[\|X-\hat X-\E[X-\hat{X}]\|^2]+\|\E[X]-\E[\hat{X}]\|^2,
$$we see that, necessarily, \begin{equation}\label{eq:XhatX}
\E[X]=\E[\hat{X}],
\end{equation} since, otherwise, the criterion could be made strictly smaller by replacing $\hat{X}$ by the translated variable $\hat{X}+\E[X]-\E[\hat{X}]$, which contradicts the optimality of $\hat{X}$.

Observe that \eqref{eq:XhatX} remains true in a more general setting, as soon as the constraint corresponds to a quantity invariant by translation.

\end{rem}

\section{Main results and proofs}\label{section:res}

\subsection{Negligibility of the ridge set}

Given a curve $f :[0,1]\to \R^d$, consider the set $$\mathcal P_f(x)=\{y\in \mbox{Im} f,\|x-y\|=d(x, \mbox{Im} f)\}=\bar B(x,d(x,\mbox{Im} f))\cap \mbox{Im} f.$$ 
If $\mathcal P_f(x)$ has cardinality at least 2, $x$ is called an ambiguity point in the principal curves literature (see \cite{HasStuet}). Properties of the set of such points, named ridge set in the shape optimization community, have been studied for instance in \cite{ManMen}. In particular, the ridge set is measurable.  Using property \eqref{eq:XhatX}, it may be shown that the ridge set of an optimal curve for $X$ is negligible for the distribution of $X$. Section \ref{section:proof:hatXneg} below presents the proof of this result, as well as the proof of measurability, provided for the sake of completeness.

\begin{pro}\label{prop:hatXneg}\begin{enumerate}
		Let $f\in\mathcal C_L $ be an optimal curve for $X$ ($\Delta(f)=G(L)$).
		\item The set  $ \mathcal A_f=\{x\in\R^d,\mbox{Card}(\mathcal P_f(x))\geq 2\}$ of ambiguity points is measurable.
		\item The set $ \mathcal A_f$ is negligible for the distribution of $X$.
	\end{enumerate}

\end{pro}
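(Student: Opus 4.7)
My plan is to verify measurability of $\mathcal A$ and then use the mean-matching identity \eqref{eq:XhatX} to exclude $\P(X\in\mathcal A)>0$ via a two-selection argument. For item 1, let $K=f([0,1])$, which is compact, and observe that $x\mapsto\mathrm{diam}\,\mathcal P(x)$ is upper semicontinuous: if $x_n\to x$ and $y_n,z_n\in\mathcal P(x_n)$ attain $\mathrm{diam}\,\mathcal P(x_n)$, then by compactness of $K$ and continuity of $d(\cdot,K)$ any subsequential limits $y,z$ satisfy $\|x-y\|=\|x-z\|=d(x,K)$, so $y,z\in\mathcal P(x)$ and $\limsup_n\mathrm{diam}\,\mathcal P(x_n)\leq\mathrm{diam}\,\mathcal P(x)$. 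Consequently $\mathcal A=\bigcup_{n\geq 1}\{x:\mathrm{diam}\,\mathcal P(x)\geq 1/n\}$ is $F_\sigma$, hence Borel.

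For item 2, suppose by contradiction that $\P(X\in\mathcal A)>0$. I would first construct two Borel measurable selections $\hat X_1,\hat X_2$ of the compact-valued multifunction $x\mapsto\mathcal P(x)$ such that $\hat X_1(x)\neq\hat X_2(x)$ precisely on $\mathcal A$; a concrete choice is to take $\hat X_1(x)$ and $\hat X_2(x)$ to be the lexicographic minimum and maximum of $\mathcal P(x)$, respectively (these coincide iff $\mathcal P(x)$ is a singleton). Since $\|X-\hat X_i\|=\min_t\|X-f(t)\|$ a.s., each $\hat X_i$ takes values in $f([0,1])$ and realises $G(L)$, so \eqref{eq:XhatX} yields $\E[\hat X_i]=\E[X]$. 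The decisive observation is that the same identity applies to mixtures: for every $A\in\mathcal F$ the random vector $\hat X_A:=\hat X_1\I_A+\hat X_2\I_{A^c}$ still takes its values in $f([0,1])$ and satisfies $\|X-\hat X_A\|=d(X,K)$ a.s., so $\E[\hat X_A]=\E[X]$ by \eqref{eq:XhatX}. Subtracting $\E[\hat X_2]=\E[X]$ produces
\[
\E\bigl[(\hat X_1-\hat X_2)\I_A\bigr]=0\qquad\text{for every }A\in\mathcal F.
\]
Taking successively $A=\{(\hat X_1-\hat X_2)^j>0\}$ and $A=\{(\hat X_1-\hat X_2)^j<0\}$ for each $j\in\{1,\dots,d\}$ forces $\hat X_1=\hat X_2$ a.s., contradicting $\P(\hat X_1\neq\hat X_2)\geq\P(X\in\mathcal A)>0$.

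The main obstacle is the measurable selection step: one must confirm that the lexicographic extrema of $\mathcal P(x)$ are Borel measurable in $x$. This follows by iterating, coordinate by coordinate, the fact that for an upper hemicontinuous compact-valued multifunction the componentwise infimum and supremum of a continuous function are Borel measurable; alternatively, the Kuratowski--Ryll-Nardzewski selection theorem applies to the closed-graph multifunction $x\mapsto\mathcal P(x)$. Once measurability is granted, the proof reduces to the mean-matching bookkeeping above, and the uniqueness of $\hat X$ claimed before the proposition follows as an immediate corollary.
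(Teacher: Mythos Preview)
Your argument is correct, and the overall strategy---construct two distinct projection selections and confront them with the mean-matching identity \eqref{eq:XhatX}---is the same as the paper's. The execution, however, differs in both parts.

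For item~1, the paper expresses $\mathrm{diam}\,\mathcal P(x)$ as a countable limit via the dense set $f([0,1]\cap\mathbb Q)$ and open balls of radius $d(x,f([0,1]))+1/n$, then argues measurability directly from the countable description. Your route through upper semicontinuity of $x\mapsto\mathrm{diam}\,\mathcal P(x)$ is shorter and yields the extra information that $\mathcal A$ is $F_\sigma$.

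For item~2, the paper does \emph{not} invoke any measurable selection theorem: it builds by hand, for a fixed coordinate $j$, a measurable $\hat X_\varepsilon$ landing near the point of $\mathcal P(x)$ with largest $j$-th coordinate, then passes to a limit in distribution to obtain a projection $\hat X$ with $\hat X^j=\max\pi_j(\mathcal P(X))$; a symmetric construction gives $\hat Y$ with $\hat Y^j=\min\pi_j(\mathcal P(X))$. The contradiction then comes from the pointwise ordering $\hat X^j\ge \hat Y^j$ together with $\E[\hat X^j]=\E[\hat Y^j]$. Your approach replaces this explicit approximation by an appeal to standard selection theory (lexicographic extrema of an upper hemicontinuous compact-valued multifunction), and since lexicographic extrema are not coordinate-wise ordered, you compensate with the mixture identity $\E[(\hat X_1-\hat X_2)\I_A]=0$ for all $A$, which is a neat strengthening. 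One minor point: after the first coordinate the restricted multifunction need no longer be upper hemicontinuous, only Borel measurable with compact values, so the iteration you sketch relies on the measurable (rather than u.h.c.) version of the $\min/\max$ lemma; this is standard, but worth stating precisely. The trade-off is that your proof is considerably shorter at the cost of importing a selection theorem, while the paper's is fully self-contained but takes roughly a page of careful construction.
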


\begin{rem}
The fact that the ridge set is negligible for the distribution of $X$  may be extended to the context of computing optimal trees under $\mathcal H^1$ constraint. Indeed, the result relies on property \eqref{eq:XhatX}, and $\mathcal H^1$ measure  is  translation invariant.
\end{rem}

\subsection{Main theorem and comments}
Recall that a signed measure on $(\Omega,\mathcal F)$ is a function $m:\mathcal F\to \R$ such that $m(\emptyset)=0$ and $m$ is $\sigma$-additive, that is $m\left(\bigcup_{k\geq 1}A_k\right)=\sum_{k\geq 1}m(A_k)$ for any sequence $(A_k)_{k\geq1}$ of pairwise disjoint sets.
For an $\R^d$-valued signed measure $m$ on $[0,1]$, that is $m=(m^1, \dots,m^d)$, where each $m^j$ is a signed measure, and for  $g:[0,1]\to\R^d$ a measurable function, we will use the following notation: $\int \langle g(t),m(dt)\rangle=\sum_{j=1}^d\int g^j(t)m^j(dt). $

A probability space $(\tilde{\Omega},\tilde{\mathcal F}, \tilde{\P})$ will be called an extension of $(\Omega,\mathcal F,\P)$ if there exists a random vector $\tilde{X}$ defined on $(\tilde{\Omega},\tilde{\mathcal F}, \tilde{\P})$, with  the same distribution $\mu$ as $X$. For simplicity, we still denote this random vector by $X$ throughout the paper.

\begin{theo}\label{theo:main}
	Let $L>0$ such that $G(L)>0$ and let $f\in\mathcal C_L$ such that $\Delta(f)=G(L)$. Then, $\L(f)= L$. Assuming that $f$ is  $L$-Lipschitz, we obtain that

\begin{itemize}	
		\item $f$ is right-differentiable on $[0,1)$, $\|f '_r(t)\|=L$ for all $t\in[0,1)$,
		\item $f$ is left-differentiable on $(0,1]$, $\|f '_\ell(t)\|=L$ for all $t\in (0,1]$,
	\end{itemize} and there exists a unique signed measure $f ''$ on $[0,1]$ (with values in $\R^d$) such that \begin{itemize}
	\item $f ''((s,t])=f'_r(t)-f'_r(s)$ for all $0\leq s\leq t<1$,
	\item $f ''([0,1])= 0$. 
\end{itemize}
In the case $\mathcal C_L=\{f:[0,1]\to \R^d, \L(f)\leq L\}$, we also have \begin{itemize}
	\item $f''(\{0\})=f'_r(0)$,
	\item $f''(\{1\})=-f'_\ell(1)$.
\end{itemize}

Moreover, there exists a unique $\lambda>0$ and,  there exists  a random variable $\hat{t}$ with values in $[0,1]$, defined on an extension $(\tilde{\Omega},\tilde{\mathcal F}, \tilde{\P})$ of the probability space $(\Omega,\mathcal F,\P)$, such that
\begin{itemize}
	\item $\|X-f(\hat{t})\|=d(X,\mbox{Im}f)$ a.s.,
	\item for every bounded Borel function $g:[0,1]\to \R^d$, 
	\begin{equation}\label{eq:formuleth}
	\E\left[\langle X-f(\hat{t}), g(\hat{t})\rangle\right]=-\lambda\int_{[0,1]}\langle g(t), f ''(dt)\rangle.
	\end{equation}
\end{itemize} 
\end{theo}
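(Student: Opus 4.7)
The plan proceeds in three phases: tightness of the length constraint, derivation of the Euler--Lagrange identity \eqref{eq:formuleth} and from it the regularity of $f$ and construction of $f''$, and measurable lifting of $\hat X$ to $\hat t$. I first show $\L(f)=L$ by contradiction: if $\L(f)<L$, then $G(L)>0$ forces $\P(X\neq \hat X)>0$, so there exists a ball $B\subset\R^d$ with $\P(X\in B,\,X\neq \hat X)>0$. Grafting to $f$ a segment of length $L-\L(f)$ reaching into $B$ (attached at an endpoint in the open case, or as a small excursion in the closed case) produces an admissible curve with strictly smaller $\Delta$, a contradiction. Once $\L(f)=L$, the hypothesis that $f$ is $L$-Lipschitz combined with Rademacher and $\int_0^1\|f'\|\,dt=L$ gives $\|f'\|=L$ almost everywhere, so any existing right-derivative is automatically of norm $L$; pointwise existence of $f'_r$ at every point will follow from bounded variation of $f'_r$, itself established in the next step.

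The heart of the argument is \eqref{eq:formuleth}. For a smooth test function $g:[0,1]\to\R^d$ (with $g(0)=g(1)$ in the closed case), set $f_\e=f+\e g$. By Proposition~\ref{prop:hatXneg}, $\hat t$ is a.s.\ the unique minimizer of $t\mapsto\|X-f(t)\|^2$, so the envelope theorem yields
\[\left.\tfrac{d}{d\e}\Delta(f_\e)\right|_{\e=0}=-2\,\E\bigl[\langle X-f(\hat t),g(\hat t)\rangle\bigr],\]
while $\|f'\|=L$ a.e.\ gives $\tfrac{d}{d\e}\L(f_\e)|_{\e=0}=L^{-1}\int_0^1\langle f'(t),g'(t)\rangle\,dt$. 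The Karush--Kuhn--Tucker condition for the constrained problem, with non-negative multiplier, yields after absorbing constants
\[\E\bigl[\langle X-f(\hat t),g(\hat t)\rangle\bigr]=\lambda\int_0^1\langle f'(t),g'(t)\rangle\,dt,\]
with $\lambda>0$ because $G(L)>0$ makes the constraint active. The left-hand side is a bounded linear functional of $g\in C([0,1];\R^d)$, with representing measure $\E[X-f(\hat t)\mid \hat t=\cdot]\,\nu_{\hat t}$, so the same holds for the right-hand side, which forces $f'_r$ to be of bounded variation and its distributional derivative to be a finite signed $\R^d$-valued measure $f''$ on $[0,1]$. Pointwise existence of one-sided derivatives then follows from the BV property; the atoms $f''(\{0\})=f'_r(0)$ and $f''(\{1\})=-f'_\ell(1)$ in the open case, as well as the identity $f''([0,1])=0$, emerge as the boundary terms of the integration by parts. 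Substituting yields \eqref{eq:formuleth}; uniqueness of $\lambda$ follows by testing against a $g$ for which $\int\langle g,f''(dt)\rangle\neq 0$ (non-degenerate under $G(L)>0$), and the extension from smooth to bounded Borel $g$ is by density. Finally, $\hat t$ is extracted from $\hat X$ by measurable selection in $f^{-1}(\hat X)$, enlarging the probability space on the null set of self-intersection to randomize the choice.

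The main obstacle is the bootstrapping of the regularity of $f$ from a weak identity derived only for smooth perturbations: one must simultaneously justify the envelope computation for a projection $\hat t$ that depends on $f_\e$, use the weak identity to promote $f'_r$ to a BV function and thus extract the measure $f''$, and keep track of the boundary terms that encode the open versus closed nature of the problem.
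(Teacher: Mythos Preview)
Your approach is genuinely different from the paper's: you attempt a direct first-variation argument with KKT in the space of curves, whereas the paper sidesteps the infinite-dimensional subtleties entirely by building a discrete approximation (minimizing a smoothed, penalized $F_n$ over $n$-tuples in $(\R^d)^n$ under a discrete length constraint), applying finite-dimensional Lagrange multipliers there, and passing to the limit. The discretization buys clean differentiability of the objective, a standard multiplier theorem, and a natural construction of $\hat t$ as a distributional limit of $\hat t_n$; your route is more conceptual but exposes you to delicate regularity and selection issues.

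There is a real gap in your argument: the claim that $\lambda>0$ follows from the constraint being active is false in general. An active inequality constraint guarantees only $\lambda\ge 0$; strict positivity requires ruling out that $f$ is already a critical point of $\Delta$ on the full space. In the paper this is the content of Lemma~\ref{lem:noneq}: if $\lambda=0$ then the Euler--Lagrange identity reduces to $\E[\langle X-f(\hat t),g(\hat t)\rangle]=0$ for all $g$, i.e.\ $\E[X\mid \hat X]=\hat X$ a.s., and the lemma shows by a careful length-budget construction that self-consistency is incompatible with optimality when $G(L)>0$. This step is substantial and is not supplied by your outline. Note also that you need $\lambda>0$ \emph{before} you can bootstrap the BV regularity of $f'$ from the weak identity, so without this lemma your scheme is circular.

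Two further issues deserve attention. First, your envelope computation $\tfrac{d}{d\e}\Delta(f_\e)|_{\e=0}=-2\,\E[\langle X-f(\hat t),g(\hat t)\rangle]$ relies on uniqueness of the minimizing $t$, but Proposition~\ref{prop:hatXneg} gives only uniqueness of $\hat X=f(\hat t)$; if $f$ is not injective the argmin set $f^{-1}(\hat X)$ can be nontrivial and Danskin's theorem yields only one-sided derivatives depending on $g$. Second, invoking KKT in this Banach-space setting requires a constraint qualification for the nonsmooth length functional, which you have not verified. The paper's discretization avoids both problems.
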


\begin{rem}
	Let $m_{\hat{t}|X}$ denote the conditional distribution of $\hat{t}$ given $X$. Then, equation \eqref{eq:formuleth} can be written in the following form: $$\int_{\R^d}\int_{[0,1]}\langle x-f(t),g(t)\rangle m_{\hat{t}|X}(x,dt) d\mu(x)=-\lambda\int_{[0,1]}\langle g(t), f ''(dt)\rangle.$$ 
\end{rem}

\begin{rem}\label{rem:formuleIP} Whenever the function  $g$ is absolutely continuous, an integration by parts (see for instance \citet[Theorem 21.67 $\&$ Remarks 21.68]{HewStr}) shows that equation \eqref{eq:formuleth}  may also be written
	\begin{equation}\label{eq:formuleIP}
	\E\left[\langle X-f(\hat{t}), g(\hat{t})\rangle\right]=\lambda\int_{0}^1\langle g'(t), f_r '(t)\rangle dt.
	\end{equation}
	To see this, let us write 
	$$ f ''([0,1])g(1)=f''(\{0\})g(0)+\int_{(0,1]}\langle g(t), f ''(dt)\rangle+\int_{(0,1]}\langle g'(s),f''([0,s])\rangle ds.$$
	Since $f''([0,1])=0$, we have $$0=\int_{[0,1]}\langle g(t), f ''(dt)\rangle+\int_{(0,1]}\langle g'(s), f_r'(s)\rangle ds,$$ which, combined with \eqref{eq:formuleth}, implies the announced formula \eqref{eq:formuleIP}.
\end{rem}

\begin{rem}If the curve $f$ has an angle at  $t$, which means that $f'_r(t)\neq f'_\ell(t)$, we see that 	$$\E[(X-f(\hat{t}))\I_{\{\hat{t}=t\}}]=-\lambda f''(\{t\})=\lambda(f_\ell'(t)-f_r'(t))\neq 0.$$ So, at an angle, $\P(\hat{t}=t)>0$.
	
	Besides, when $\mathcal C_L=\{f:[0,1]\to \R^d, \L(f)\leq L\}$, we have
	$$\E[(X-f(\hat{t}))\I_{\{\hat{t}=0\}}]=-\lambda f''(\{0\})=-\lambda f'_r(0),$$
	which cannot be zero, since $f'_r(0)$ has norm $L>0$. This implies that $\P(\hat{t}=0)>0.$

\end{rem}

\begin{rem}
	Regarding the random variable	$\hat t$,
	let us mention that $\hat{t}$ is unique almost surely whenever the curve is injective since $f(\hat t)$ is unique almost surely (it  is the case in dimension $d\leq 2$ ; see Section \ref{section:inj}). 
	 In general, it is worth pointing out that  Theorem \ref{theo:main} does not ensure that it is a function of $X$, as $(X,\hat{t})$ is, in fact, obtained as a limit in distribution of $(X,\hat{t}_n)$ for some sequence $(\hat{t}_n)_{n\geq 1}$.
	 	Besides, note that we do not know whether $\lambda$ depends on the curve $f$.

\end{rem}

\begin{rem}[Principal curves in dimension 1]

	Let $\mathcal C_L=\{f:[0,1]\to \R^d, \L(f)\leq L\}.$ 
	It may be of interest to consider the simplest case of dimension 1, where the problem may be solved entirely and explicitly
	Assume that  $X$ is a real-valued random variable, and that, for some length $L>0$, $G(L)>0$. Consider an optimal curve $f$ with length $\L(f)\leq L$. Using  Corollary \ref{cor:L(f)}  below, we have that, in fact, $\L(f)=L$, so that the image of $f$  is given by an interval $[a,a+L]$.
	In this context,
	solving directly the length-constrained principal curve problem in dimension 1 leads to minimizing in $a$ the quantity  $$\Delta(a):=\E\left[d(X,\mbox{Im}f)^2\right]=\E[(X-a)^2\I_{\{X<a\}}]+\E[(X-a-L)^2\I_{\{X>a+L\}}].$$ 
	The function $\Delta$ is differentiable in $a$, with derivative given by $$\Delta'(a)=2\E[(a-X)\I_{\{X<a\}}]+2\E[(a+L-X)\I_{\{X>a+L\}}].$$ Moreover, $\Delta'$  admits a right-derivative $\Delta''_r(a)=2(\P(X<a)+\P(X>a+L))$, which is positive since $G(L)>0$ implies that we do not have $X\in [a,a+L]$ almost surely.
	Hence, $\Delta$ is strictly convex, which shows that the minimizing $a$ is unique, so that the image of the principal curve $f$ is also uniquely defined.

	Besides, observe that equation \eqref{eq:formuleth} from Theorem \ref{theo:main} takes the following form in dimension 1: for every bounded Borel function $g:[0,1]\to \R^d$,  $$\E[(X-a)\I_{\{X<a\}}g(0)] +\E[(X-a-L)\I_{\{X>a+L\}}g(1)]=\lambda L (g(1)-g(0)).$$
In particular, we get
\begin{align*}
&\E[(X-a)\I_{\{X<a\}}] =-\lambda L ,\\
&\E[(X-a-L)\I_{\{X>a+L\}}]=\lambda L,
\end{align*}which characterizes $\lambda$.
Let us stress that we directly see in this case that $\lambda>0$, since, otherwise $X\in [a,a+L]$ almost surely, which contradicts the fact that $G(L)>0.$

\end{rem}

\subsection{Proof of Proposition \ref{prop:hatXneg}}\label{section:proof:hatXneg}	
\begin{enumerate}
		\item Note that \begin{align*}
			\mathcal A&=\{x\in\R^d,\mbox{Card}(\bar B(x,d(x,\mbox{Im} f))\cap \mbox{Im} f)\geq 2\}\\
			&=\{x\in\R^d,\mbox{diam}(\bar B(x,d(x,\mbox{Im} f))\cap \mbox{Im} f)> 0\}\\
			&=\R^d\setminus\{x\in\R^d,\mbox{diam}(\bar B(x,d(x,\mbox{Im} f))\cap \mbox{Im} f)= 0\}.
		\end{align*}
		For every $x\in \R^d$, we may write $$\mbox{diam}(\bar B(x,d(x,\mbox{Im} f))\cap \mbox{Im} f)=\lim_{n\to\infty} \mbox{diam}(B(x,d(x,\mbox{Im} f)+1/n)\cap \mbox{Im} f).$$
		Since $f$ is continuous, $f([0,1]\cap \mathbb Q)$ is 
		dense in $\mbox{Im} f$. For every $n\geq 1$, the countable set
		$B(x,d(x,\mbox{Im} f)+1/n)\cap f([0,1]\cap \mathbb Q)$ is dense in $B(x,d(x,\mbox{Im} f)+1/n)\cap \mbox{Im} f$, so that both sets have the same diameter.
		Yet, it can be easily checked that the diameter of a countable set	is measurable, and finally, we obtain that the set $\mathcal A$ of ambiguity points is measurable.
		\item To begin with, we prove that, for every $j=1,\dots,d$, it is possible to construct a random vector $\hat X$ with values in $\mbox{Im} f$ such that $\|X-\hat X\|=d(X,\mbox{Im}f)$ a.s., and $$\hat X^j=\max \pi_j(\bar B(X,d(X,\mbox{Im} f))\cap \mbox{Im} f).$$
			Here,   $\pi_j$ stands for the projection onto direction $j$, that is, for $x=(x^1,\dots,x^{d})\in \R^d$,  $\pi_j(x)=x^j$.
			Let $\{t_1,t_2,\dots\}$ be an enumeration of the countable set $[0,1]\cap \mathbb Q$. 	
			Let  $\e>0$, $x\in \R^d$. 	
			First, note that the set $\{t\in[0,1], \|f(t)-x\|< d(x, \mbox{Im} f)+\e\}$ is open. It is nonempty since the distance from $x$ to the closed set $\mbox{Im} f$ is attained. We deduce from this that $\mbox{Card}(\{t\in[0,1]\cap \mathbb Q, \|f(t)-x\|\leq d(x, \mbox{Im} f)+\e\})=\infty.$ 
			Let us define the sequence $(k_\e^n(x))_{m\in\N}$ by\begin{align*}
				&k_\e^1(x)=\min\{k: \|f(t_k)-x\|\leq d(x, \mbox{Im} f)+\e\}\\
				&k_\e^{m+1}(x)=\min\{k>k_\e^m(x): \|f(t_k)-x\|\leq d(x, \mbox{Im} f)+\e\},\quad m\in\N.
			\end{align*}	
			
			Let $j\in\{1,\dots,d\}$. We set	$$p^*(x)=\min\{p\geq 1, f^j(t_{k_\e^p(x)})\geq  \sup_{m\in\N}f^j(t_{k_\e^m(x)})-\e\}.$$  We define $\hat X_\e(x)=f(t_{k_\e^{p^*(x)}(x)})$, which is a measurable choice.
			Notice that, since $\{f^j(t_{k_\e^m(x)}),m\in\N\}=\pi_j(\bar B(x,d(x,\mbox{Im} f)+\e)\cap f([0,1]\cap\mathbb Q))$ is dense in $\pi_j(\bar B(x,d(x,\mbox{Im} f)+\e)\cap \mbox{Im} f)$, both sets have the same supremum.
			
			Let 
			$$\Pi_\e(x)=\pi_j(\bar B(x,d(x,\mbox{Im} f)+\e)\cap \mbox{Im} f),\quad \Pi(x)=\pi_j(\bar B(x,d(x,\mbox{Im} f))\cap \mbox{Im} f).$$
			The limit of $\hat X^j_\e(x)$ is given by $\lim_{\e\to 0}\max \Pi_\e(x)$. Yet, note that, for every $\e$, $\Pi(x)\subset \Pi_\e(x)$ so that \begin{equation}\label{eq:pi1}
				\max\Pi(x)\leq\max\Pi_\e(x).
			\end{equation} Moreover, if $\e$ is small enough, then for all $y\in\Pi_\e(x)$, $d(y, \Pi(x))\leq \eta(\e)$, where $\eta$ tends to 0 with $\e$, and, thus,  \begin{equation}\label{eq:pi2}\max\Pi_\e(x)\leq \max\Pi(x) + \eta(\e).\end{equation} Combining inequalities \eqref{eq:pi1} and \eqref{eq:pi2}, we obtain that $\lim_{\e\to 0}\max \Pi_\e(x)=\max \Pi(x)$.
			
			Set $\e_n=1/n$. Up to an extraction, we may assume that $(\hat X_{\e_n}(X),X)$ converges in distribution to $(\hat X,X)$ as $n\to \infty$. The random vector $\hat X$ satisfies $\|X-\hat X\|=d(X,\mbox{Im} f)$ and $\hat X^j=\max \Pi(X)$.
			
			Similarly, as may be seen by replacing $X$ by $-X$,  there exists 
			a random vector $\hat Y$ with values in $\mbox{Im} f$ such that $\|X-\hat Y\|=d(X,\mbox{Im}f)$ a.s., and   $$\hat Y^j=\min \pi_j(\bar B(X,d(X,\mbox{Im} f))\cap \mbox{Im} f).$$ 
			
			Now, we use this result to show that $\mathcal A$ is negligible for the distribution of $X$.	Assume that $\P(\mbox{Card}(\mathcal P_f(X))\geq 2)>0.$ There exists a first coordinate $j$ such that $\P(\mbox{Card}(\mathcal \pi_j(\mathcal P_f(X)))\geq 2)>0.$
			Then, it is possible to construct $\hat X^j$ and $\hat Y^j$ such that $\P(\hat X^j\geq\hat Y^j)=1$ and $\P(\hat X^j>\hat Y^j)>0$. Yet, by property \eqref{eq:XhatX}, $\E[\hat X]=\E[X]=\E[\hat Y]$, and, in particular, $\E[\hat X^j]=\E[\hat Y^j]$, which leads to a contradiction. Thus,  $\P(\mbox{Card}(\mathcal P_f(X))=1)=1.$

	\end{enumerate}

\bigskip

In the next sections, we present two lemmas, which are important both independently and  for obtaining the main result Theorem \ref{theo:main}.

\subsection{Properties of the function $G$}

The first lemma is about the monotonicity and continuity  properties of the function $G$.
Observe that $G$ is nonincreasing, since $\{f
:[0,1]\to \R^d,\L(f)\leq L_1\}\subset \{f:[0,1]\to \R^d,\L(f)\leq L_2\}$ when $L_1<L_2$, so that $G(L_2)\leq G(L_1)$.

\begin{lem}\label{lem:propG}\begin{enumerate}
		
		\item The function $G$ is continuous.
		\item The function $G$ is strictly decreasing over $[0,L_0)$, where $L_0=\inf\{L\geq 0,G(L)=0\}\in\R_+\cup\{\infty\}$.
		
	\end{enumerate}
	
\end{lem}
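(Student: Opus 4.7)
For continuity, the idea is to shrink an optimal $L_2$-Lipschitz $f \in \mathcal{C}_{L_2}$ (the Lipschitz property being granted by the first remark of Section~\ref{section:not}) into $\mathcal{C}_{L_1}$ by contraction around its starting point: set $\alpha = L_1/L_2$ and $\tilde f(t) = f(0) + \alpha(f(t) - f(0))$. Then $\tilde f$ is $L_1$-Lipschitz, so $\L(\tilde f) \leq L_1$, and $\tilde f(0) = \tilde f(1) = f(0)$ whenever $f(0) = f(1)$, so $\tilde f \in \mathcal{C}_{L_1}$ in both open and closed settings. The pointwise bound $\|\tilde f(t) - f(t)\| = (1-\alpha)\|f(t) - f(0)\| \leq L_2 - L_1$ combines with the triangle inequality for $d(\cdot, f([0,1]))$ and Jensen's inequality $\E[d(X,f)] \leq \sqrt{\Delta(f)}$ to give
$$G(L_1) - G(L_2) \leq \Delta(\tilde f) - \Delta(f) \leq 2\sqrt{G(L_2)}\,(L_2 - L_1) + (L_2 - L_1)^2.$$
Since $G$ is nonincreasing and bounded above by $G(0) = \E[\|X - \E[X]\|^2] < \infty$, this yields both left continuity (taking $L_2 = L$, $L_1 = L - \e$) and right continuity (taking $L_1 = L$, $L_2 = L + \e$) at every $L$.

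For the strict monotonicity on $[0, L_0)$, fix $L_1 < L_2 < L_0$ and take an optimal $f \in \mathcal{C}_{L_1}$, so that $\Delta(f) = G(L_1) > 0$. Then $\{x : d(x, f([0,1])) > 0\}$ is an open set with positive $X$-probability, hence it meets $\mathrm{supp}(X)$ at some $X_0$ with $r_0 := d(X_0, f([0,1])) > 0$. Let $p = f(s_0)$ be a nearest point of the range of $f$ to $X_0$, $v = (X_0 - p)/r_0$, and pick any $\delta \in (0, \min\{r_0, (L_2-L_1)/2\}]$. I construct $\tilde f$ by inserting at $s_0$ a there-and-back segment of length $\delta$ in direction $v$: trace $f$ up to $p$, go linearly from $p$ to $p + \delta v$ and back to $p$, then resume $f$ to its endpoint. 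The modification adds length $2\delta \leq L_2 - L_1$ and does not move $f(0)$ or $f(1)$, so $\L(\tilde f) \leq L_2$ and closedness is preserved; hence $\tilde f \in \mathcal{C}_{L_2}$.

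It remains to show $\Delta(\tilde f) < \Delta(f)$. The range of $\tilde f$ contains that of $f$, so $d(x, \tilde f) \leq d(x, f)$ for every $x$, and at $X_0$ this is strict since $d(X_0, p + \delta v) = r_0 - \delta < r_0$. By continuity, the strict inequality $d(x, \tilde f) < d(x, f)$ persists on an open neighborhood $U$ of $X_0$, and $\P(X \in U) > 0$ because $X_0 \in \mathrm{supp}(X)$; integration then gives $\Delta(\tilde f) < \Delta(f) = G(L_1)$, hence $G(L_2) \leq \Delta(\tilde f) < G(L_1)$. The main point requiring care throughout is respecting the closed-curve constraint $f(0) = f(1)$: this is precisely what dictates contracting around the fixed point $f(0)$ (rather than reparametrizing by arclength and truncating) in the first step, and using a there-and-back detour (rather than an endpoint extension) in the second.
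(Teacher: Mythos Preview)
Your proof is correct and takes a genuinely different, more elementary route than the paper.

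For continuity, the paper argues by compactness: given $L_k\to L$, it takes optimal $L_k$-Lipschitz curves $f_k$, applies Arzel\`a--Ascoli to extract a uniformly convergent subsequence, and shows the limit is optimal for $L$. Your contraction around $f(0)$ gives instead a direct quantitative bound $0\le G(L_1)-G(L_2)\le 2\sqrt{G(0)}\,(L_2-L_1)+(L_2-L_1)^2$, which is shorter and in fact shows that $G$ is locally Lipschitz.

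For strict monotonicity, the paper builds a ``cross'' $\mathcal C(t_0,r)$ of $2d$ short segments at a point $f(t_0)$ of the curve, locates a good $t_0$ by a pigeonhole argument over the grid $\{k/p\}$, and derives an explicit lower bound on $G(L_1)-G(L_1+4dr)$ in terms of $\eta,\delta,K$. Your argument picks a single point $X_0\in\mathrm{supp}(X)\setminus f([0,1])$ and appends one there-and-back segment toward it; the support property then gives $\Delta(\tilde f)<\Delta(f)$ immediately. This is considerably simpler. The trade-off is that the paper's cross construction is not just scaffolding for this lemma: the same $\hat Z_{t_0,r}$ device, together with the same pigeonhole localization, is reused verbatim in the proof of Lemma~\ref{lem:noneq} (lack of self-consistency), where one must beat a second-order term $\e^2\E[\|\hat X\|^2]$ and the simple support argument would not suffice. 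So the paper's version pays for itself later, while yours is the cleaner standalone proof of the present lemma.
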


In particular, Lemma \ref{lem:propG} admits the next useful  corollary. \begin{cor}\label{cor:L(f)}
	For  $L>0$, if $G(L)>0$ and  $f\in\mathcal C_L$ is such that $\Delta(f)=G(L), $ then $\L(f)=L$. 

\end{cor}
\begin{proof}
	If $\L(f)<L$, then Lemma \ref{lem:propG} would imply  $G(\L(f))>G(L)=\Delta(f)$, which contradicts the definition of $G$.
\end{proof}

\begin{proof}[Proof of Lemma \ref{lem:propG}]
1.	Set $L\geq 0$. Let us show that $G$ is continuous at the point $L$. Let $(L_k)_{k\in \N}$ be a sequence in $\R_+$ converging to  $L$, with $L_k\neq L$ for all $k\in\N$. Let $f\in\mathcal C_L$ be such that  $\Delta(f)=G(L)$, and let $\hat X$ stands for a random vector taking its values in $\mbox{Im} f$ such that $\|X-\hat X\|=d(X,\mbox{Im}f)$ a.s. For every $k\in\N$, let $f_k:[0,1]\to\R^d$ be a curve such that $\L(f_k)\leq L_k$, $\Delta(f_k)=G(L_k)$ and $\|f_k(t)-f_k(t')\|\leq L_k |t-t'|$ for $t,t'\in[0,1]$.
		
		Observe that the sequence $(G(L_k))_{k\in\N}$ is bounded since $\E[\|X\|^2]<\infty$. Let us show that $G(L)$ is the unique limit point of this sequence. Let $\gamma:\N\to\N $ be any increasing function. Our purpose is to show that the sequence  $(G(L_{\gamma(k)}))_{k\in\N}$ converges to $G(L)$.
		
		Let us check that the  $f_k$ are equi-uniformly continuous and that the sequence $(f_k(0))$ is bounded. Since the sequence $(L_k)_{k\in\N}$ is bounded, say by $L'$, the $f_k$ are Lipschitz with common Lipschitz constant $L'$, and, thus, they are equi-uniformly continuous. For every $k\in\N$, $t\in[0,1]$, we have $\|f_k(t)\| \ge \|f_k(0)\| - L' t\ge \|f_k(0)\| - L'$. Thus, if there exists an increasing function $\kappa:\N\to \N$ such that $\|f_{\kappa(k)}(0)\| \to\infty$, one has $G(L_{\kappa(k)})\to\infty$, which is impossible since $G(L_k)\le \E[\|X\|^2]<\infty$. So, the sequence $(f_k(0))_{k\in\N}$ is bounded.

		Consequently, there exists an increasing function $\sigma:\N\to\N $ such that the subsequence $(f_{\sigma\circ\gamma(k)})_{k\in\N}$ converges uniformly to some function $\phi:[0,1]\to\R^d$.
		Note that the curve $\phi$ is $L$-Lipschitz, since for all $t,t'$, 
		\begin{align*}\|\phi(t)-\phi(t')\|&\leq 
			\|\phi(t)-f_{\sigma\circ\gamma(k)}(t)\|+\|f_{\sigma\circ\gamma(k)}
			(t)-f_{\sigma\circ\gamma(k)}(t')\|-\|f_{\sigma\circ\gamma(k)}(t')-\phi(t')\|\\
			&\leq 
			\|\phi(t)-f_{\sigma\circ\gamma(k)}(t)\|+L_{\sigma\circ\gamma(k)}|t-t'|-\|f_{\sigma\circ\gamma(k)}(t')-\phi(t')\|,
		\end{align*}which implies, taking the limit as $k\to \infty$,
		$\|\phi(t)-\phi(t')\|\leq L|t-t'|$.
		We have $\L(\phi)\leq \lim_{k\to\infty}L_k=L.$  
		Now, observe that
		\begin{align*}
			&\min_t\|X-f_{\sigma\circ\gamma(k)}(t)\|^2
			-\min_t\|X-\phi(t)\|^2\\&=
			\left(\min_t\|X-f_{\sigma\circ\gamma(k)}(t)\|
			-\min_t\|X-\phi(t)\|\right)\left(\min_t\|X-f_{\sigma\circ\gamma(k)}(t)\|
			+\min_t\|X-\phi(t)\|\right)
			\\&\leq
			\|\phi(t^*)-f_{\sigma\circ\gamma(k)}(t^*)\|(
			\|X-f_{\sigma\circ\gamma(k)}(t^*)\|+\|X-\phi(t^*)\|),
		\end{align*}
		where $\|X-\phi(t^*)\|=\min_t\|X-\phi(t)\|$. 
		Since $\E[\|X\|^2]<\infty$ and $f_{\sigma\circ\gamma(k)}$ converges uniformly to $\phi$,
		this shows that  $\Delta(f_{\sigma\circ\gamma(k)})$ converges to $\Delta(\phi)$.
		
		Finally, let us check that $\Delta(\phi)=G(L)$. If $L=0$, then for every $k$, $L_k\geq L$, thus $\Delta(f_{\sigma\circ\gamma(k)})=G(f_{\sigma\circ\gamma(k)})\leq G(0)$ for every $k$. Consequently, $\Delta(\phi)\leq G(0)$, which implies $\Delta(\phi)= G(0)$ since $\phi$ has length 0. If $L>0$, note that, for every $k$,  $\frac {L_k}{L}\hat X$ is a random vector with values in $\frac {L_k}{L}\mbox{Im} f$ since $\hat X$ is taking its values in $\mbox{Im} f$. Moreover, $\frac {L_k}{L}f$ has length at most $L_k$    since $f$ has length $L$. Thus, for every $k$, $$\E\left[\left\|X-\frac{L_{\sigma\circ\gamma(k)}}L\hat X\right\|^2\right]\geq G(L_{\sigma\circ\gamma(k)})=\Delta(f_{\sigma\circ\gamma(k)}).$$ taking the limit as $k\to \infty$, we obtain  $$\E\left[\|X-\hat X\|^2\right]\geq \Delta(\phi),$$ which means that $\Delta(\phi)=G(L)$ since $\L(\phi)\leq L$.
		
2. We have to show that $G$ is strictly decreasing as long as the length constraint is effective (that is $G(L)> 0$).
		Let us prove that for $0\leq L_1<L_2$, we have $G(L_2)<G(L_1)$ if $G(L_1)>0$.
		Let $f:[0,1]\to \R^d$ such that $\L(f)\leq L_1$ and $\Delta(f)=G(L_1)$.
		For $t_0\in[0,1]$ and $r>0$, we define $\hat Z_{t_0,r}$ by
		$$\begin{cases}
		\hat Z^J_{t_0,r}=f^J(t_0)+r\land (X^J-f^J(t_0))\I_{\{X^J\geq f^J(t_0)\}}+(-r)\lor (X^J-f^J(t_0))\I_{\{X^J< f^J(t_0)\}},\\\mbox{where }J=\min\{i:|X^i-f^i(t_0)|=\|X-f(t_0)\|_{\infty}\}\\
		\hat Z^i_{t_0,r}=f^i(t_0) \mbox{ if }i\neq J, i=1,\dots,d.
		\end{cases}$$
		Observe that  $\hat Z_{t_0,r}$ takes its values in $$\mathcal C (t_0,r)=\bigcup_{j=1}^d\{x\in\R^d: x^i=f^i(t_0) \mbox{ for } i\neq j, |x^j-f^j(t_0)|\leq r\}.$$
		Indeed, all coordinates of $\hat Z_{t_0,r}$ are equal to the corresponding coordinate of $f(t_0)$ apart from the $J$-th  coordinate, that is the first coordinate  for which the distance between $X$ and $f(t_0)$ is the largest one. Let us check that $|\hat Z_{t_0,r}^J-f^J(t_0)|\leq r$.
		
		If $X^J\geq f^J(t_0)$, either $\hat Z_{t_0,r}^J-f^J(t_0)=r $, or $\hat Z_{t_0,r}^J-f^J(t_0)=X^J-f^J(t_0)\leq r$.
		
		If $X^J< f^J(t_0)$, either $f^J(t_0)-\hat Z_{t_0,r}^J=r $, or $f^J(t_0)-\hat Z_{t_0,r}^J=f^J(t_0)-X^J\leq r$.
		
		\begin{figure}[H]\centering
			\begin{tikzpicture}[scale=0.3]
			\draw[color=red] (0,-5) -- (0,11);
			\draw[color=red] (-8,3) -- (8,3);
			\draw  plot[smooth, tension=.7] coordinates {(11,16) (9.6,11.4) (7.2,7.8) (4,5) (0,3) (-4,2)(-9,1)(-15,0)};
			\draw[<->,color=vert1] (-8,4) -- (-0.2,4);
			\node at (-4.2,4.5) {$r$};
			\node at (1.5,2) {$f(t_0)$};
			\node at (0,3) {$\bullet$};
			\end{tikzpicture}
			\caption{Example in $\R^2$, illustrating the support of $\hat Z_{t_0,r}$.}
		\end{figure}
		
		Then, letting again $\hat X$ be a random vector with values in $\mbox{Im} f$ such  that $\|X-\hat X\|=d(X,\mbox{Im}f)$ a.s., we set
		$$\hat X_{t_0,r}=\hat X\I_{\{\|X-\hat X\|\leq \|X-\hat Z_{t_0,r}\|\}}+\hat Z_{t_0,r}\I_{\{\|X-\hat X\|>\|X-\hat Z_{t_0,r}\|\}}.$$
		Since $\|X-\hat Z_{t_0,r}\|^2=\|X-f(t_0)\|^2-\|X-f(t_0)\|_\infty^2+(\|X-f(t_0)\|_\infty-r)^2_{+}, $

		\begin{align*}
			&\|X-\hat X\|^2-\|X-\hat{X}_{t_0,r}\|^2\\&=\left[\|X-\hat X\|^2-\|X-\hat{Z}_{t_0,r}\|^2\right]_+\\
			&=\left[\|X-\hat X\|^2-\|X-f(t_0)\|^2+\|X-f(t_0)\|_\infty^2-(\|X-f(t_0)\|_\infty-r)^2_{+}\right]_+\\
			&\geq \left[\|X-\hat X\|^2-\|X-f(t_0)\|^2+\|X-f(t_0)\|_\infty^2-(\|X-f(t_0)\|_\infty-r)^2\right]_+\\
			&= \left[\|X-\hat X\|^2-\|X-f(t_0)\|^2+2r\|X-f(t_0)\|_\infty-r^2\right]_+\\
			&= \left[\|f(t_0)-\hat X\|^2+2\langle X-f(t_0),f(t_0)-\hat X\rangle+2r\|X-f(t_0)\|_\infty-r^2\right]_+\\
			&= \left[-\|f(t_0)-\hat X\|^2+2\langle X-\hat X,f(t_0)-\hat X\rangle+2r\|X-f(t_0)\|_\infty-r^2\right]_+\\
			&\geq \left[-\|f(t_0)-\hat X\|^2+2\langle X-\hat X,f(t_0)-\hat X\rangle+\frac{2r}{\sqrt{d}}\|X-f(t_0)\|-r^2\right]_+\\&\quad\mbox{ since for every }x\in\R^d, \|x\|\leq\sqrt{d}\|x\|_{\infty}\\
			&\geq \left[-\|f(t_0)-\hat X\|^2+2\langle X-\hat X,f(t_0)-\hat X\rangle+\frac{2r}{\sqrt{d}}\|X-\hat{X}\|-r^2\right]_+\\&\quad\mbox{ since }\|X-\hat{X}\|\leq\|X-f(t_0)\|. 
		\end{align*}
		
		Besides, $\hat X_{t_0,r}$ takes its values in $\mbox{Im} f\cup \C(t_0,r)$, which is the image of a  curve with length at most $L_1+4dr$, so that $ \E[\|X-\hat X_{t_0,r}\|^2]\geq G(L_1+4dr)$. 
		
		Thus, \begin{multline}\label{eq:minorGL1}G(L_1)\geq  G(L_1+4dr)\\+\E\left[\left[-\|f(t_0)-\hat X\|^2+2\langle X-\hat X,f(t_0)-\hat X\rangle+\frac{2r}{\sqrt{d}}\|X-\hat{X}\|-r^2\right]_+\right].
		\end{multline}
		
		Since $G(L_1)>0$, $\P(\|X-\hat X\|>0)>0$, thus there exist $\delta>0$ and $K<\infty$ such that $\eta:=P(K\geq\|X-\hat X\|\geq\delta)>0$.
		
		Recall that, for all $(t,t')$, we have $\|f(t)-f(t')\|\leq L_1|t-t'|$. Then, for every $p\geq 1,$ there exists $k$, $1\leq k\leq p$, such that $\|\hat X-f(\frac{k}{p})\|\leq\frac{L_1}{p}$ and so, we have $$\sum_{k=1}^{p}\I_{\left\{\|\hat X-f(\frac{k}{p})\|\leq\frac{L_1}{p}\right\}} \geq 1.$$ Thus,
		$$\sum_{k=1}^{p}\P\left(K\geq\|X-\hat X\| \geq \delta,\left\|\hat X-f\left(\frac{k}{p}\right)\right\|\leq\frac{L_1}{p}\right) \geq \eta.$$ Consequently, for every $p\geq 1$, there exists $t_p\in[0,1]$ such that $$\P\left(K\geq\|X-\hat X\| \geq \delta,\|\hat X-f(t_p)\|\leq\frac{L_1}{p}\right) \geq \frac{\eta}{p}>0.$$
		
		According to \eqref{eq:minorGL1}, we obtain 
		\begin{align*}
			G(L_1)&\geq  G(L_1+4dr)+\E\left[-\|f(t_p)-\hat X\|^2+2\langle X-\hat X,f(t_p)-\hat X\rangle+\frac{2r}{\sqrt{d}}\|X-\hat{X}\|-r^2\right]_+\\
			&\geq  G(L_1+4dr)+\E\left[\I_{\left\{K\geq\|X-\hat X\| \geq \delta,\left\|\hat X-f\left(t_p\right)\right\|\leq\frac{L_1}{p}\right\}}\left(-\frac{L_1^2}{p^2}-\frac{2KL_1}{p}+\frac{2r\delta}{\sqrt{d}}-r^2\right)\right]\\
			&\geq G(L_1+4dr)+\frac{\eta}{p}\left(-\frac{L_1^2}{p^2}-\frac{2KL_1}{p}+\frac{2r\delta}{\sqrt{d}}-r^2\right).
		\end{align*}
		Now, choosing $r>0$ such that $\frac{2r\delta}{\sqrt{d}}-r^2>0$ and $L_1+4dr\leq L_2$, we finally obtain, taking $p$  large enough, $$G(L_1)>G(L_1+4dr)\geq G(L_2).$$
	
\end{proof}

\subsection{Default of self-consistency}

The next lemma
 states that a  principal curve with length $\leq L$ does not satisfy the so-called  self-consistency property, provided that the constraint is effective, that is $G(L)>0$.
 
\begin{lem}\label{lem:noneq}
	Let $L>0$ such that $G(L)>0$, and let $f\in\mathcal C_L$ be such that $\Delta(f)=G(L).$ If $\hat{X}$ is a random vector with values in $\mbox{Im} f$ such that $\|X-\hat{X}\|=d(X,\mbox{Im}f)$ a.s., then $\P(\E[X|\hat{X}]\neq \hat{X})>0.$
\end{lem}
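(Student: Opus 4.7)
The plan is to argue by contradiction: assume $\E[X\mid\hat X]=\hat X$ almost surely, and construct a curve $\tilde f_\epsilon\in\mathcal C_L$ with $\Delta(\tilde f_\epsilon)<\Delta(f)=G(L)$ for some small $\epsilon>0$. The strategy combines two perturbations of $f$: a global contraction toward the mean $m=\E X=\E\hat X$ (using \eqref{eq:XhatX}), whose cost in $\Delta$ is $O(\epsilon^2)$ under self-consistency; and a small spike added at a well-chosen point, which yields a linear $\Theta(\epsilon)$ decrease in $\Delta$ and therefore dominates the contraction cost.

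For the contraction, I would set $g_\epsilon(t)=(1-\epsilon)(f(t)-m)+m$, so that $\L(g_\epsilon)=(1-\epsilon)L$ and $g_\epsilon$ is closed whenever $f$ is. Taking $(1-\epsilon)(\hat X-m)+m\in g_\epsilon([0,1])$ as a candidate for the projection of $X$ onto $g_\epsilon$, a short expansion combined with the relations $\E\langle X-\hat X,\hat X-m\rangle=0$ (because $\hat X-m$ is $\sigma(\hat X)$-measurable and $\E[X-\hat X\mid\hat X]=0$) and $\Delta(f)=\Var(X)-\Var(\hat X)$ (another consequence of self-consistency) yields
\begin{equation*}
\Delta(g_\epsilon)\leq\E\|X-m-(1-\epsilon)(\hat X-m)\|^2=\Delta(f)+\epsilon^2\Var(\hat X).
\end{equation*}

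For the spike, since $G(L)>0$ while $\E[X-\hat X\mid\hat X]=0$, there must exist a point $p$ in the support of $\hat X$ at which $\E[\|X-\hat X\|^2\mid\hat X=p]>0$; choosing a unit eigenvector $v$ of the conditional covariance $\E[(X-\hat X)(X-\hat X)^\top\mid\hat X=p]$ with positive eigenvalue and a small closed ball $B=\bar B(p,\rho)$ with $\P(\hat X\in B)>0$, the quantity
\begin{equation*}
c:=\E\bigl[\langle X-\hat X,v\rangle_+\,\I_{\{\hat X\in B\}}\bigr]
\end{equation*}
is strictly positive, because $\E[\langle X-\hat X,v\rangle\,\I_{\{\hat X\in B\}}]=0$ while $\langle X-\hat X,v\rangle$ is not identically zero on $\{\hat X\in B\}$. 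I would then define $\tilde f_\epsilon$ by attaching to $g_\epsilon$, at the point $p_\epsilon=(1-\epsilon)(p-m)+m\in g_\epsilon([0,1])$, a spike of length $s=\epsilon L/2$ in direction $v$ (going out from $p_\epsilon$ to $p_\epsilon+sv$ and returning), so that $\L(\tilde f_\epsilon)=(1-\epsilon)L+2s=L$, $\tilde f_\epsilon\in\mathcal C_L$, and $\tilde f_\epsilon$ is closed if $f$ is. For $X$ whose projection onto $g_\epsilon$ is near $p_\epsilon$ and satisfies $\langle X-p_\epsilon,v\rangle\geq s$, the spike tip $p_\epsilon+sv$ beats the projection onto $g_\epsilon$ by at least $2s\langle X-p_\epsilon,v\rangle-s^2$. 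Since the projection onto $g_\epsilon$ converges a.s.\ to $\hat X$ as $\epsilon\to 0$ by the uniqueness of projection in Proposition \ref{prop:hatXneg}, the aggregated gain is at least $2sc+o(s)=c\epsilon L+o(\epsilon)$, so that
\begin{equation*}
\Delta(\tilde f_\epsilon)\leq\Delta(f)+\epsilon^2\Var(\hat X)-c\epsilon L+o(\epsilon),
\end{equation*}
which is $<\Delta(f)$ for $\epsilon>0$ small enough, contradicting $\Delta(\tilde f_\epsilon)\geq G(L)=\Delta(f)$.

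The hardest part will be transferring the constant $c>0$ from the frozen quantities $\hat X$ and $f$ into an effective lower bound for the actual gain obtained on $\tilde f_\epsilon$, in particular showing uniformly in small $\epsilon$ that the projection of $X$ onto $g_\epsilon$ stays close to $(1-\epsilon)(\hat X-m)+m$ and visits a neighborhood of $p_\epsilon$ with comparable probability, so that $c\epsilon L$ really does dominate both $\epsilon^2\Var(\hat X)$ and the $o(\epsilon)$ remainder. This should follow from a dominated-convergence argument based on the a.s.\ continuity of the projection map under the uniqueness afforded by Proposition \ref{prop:hatXneg}, but the bookkeeping is where the genuine work lies.
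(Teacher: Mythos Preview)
Your overall strategy---contract by a factor $1-\epsilon$, then append a short segment using the freed length---is exactly the paper's. The contraction step is correct. The gap is in the spike step: your assertion that ``the spike tip $p_\epsilon+sv$ beats the projection onto $g_\epsilon$ by at least $2s\langle X-p_\epsilon,v\rangle-s^2$'' has the inequality reversed. Writing $q_\epsilon$ for the projection of $X$ onto $g_\epsilon$,
\[
\|X-q_\epsilon\|^2-\|X-(p_\epsilon+sv)\|^2
=\bigl(\|X-q_\epsilon\|^2-\|X-p_\epsilon\|^2\bigr)+2s\langle X-p_\epsilon,v\rangle-s^2,
\]
and the bracket is $\le 0$ because $p_\epsilon\in g_\epsilon([0,1])$ while $q_\epsilon$ is optimal there. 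As $\epsilon\to 0$ this bracket tends to $\|X-\hat X\|^2-\|X-p\|^2\le 0$, which for $\hat X\in B\setminus\{p\}$ is a fixed nonpositive quantity independent of $\epsilon$; thus the positive part of the gain is not of order $\epsilon$ in general (when $\hat X$ has no atom at $p$, the event on which the spike helps shrinks as $s\downarrow 0$). The continuity-of-projections argument you suggest cannot rescue this, since the obstruction is the fixed quantity $\|X-p\|^2-\|X-\hat X\|^2\ge 0$, not a perturbation error.

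The paper overcomes this in two coupled ways. First, instead of a one-sided spike it attaches a \emph{cross} of $2d$ short segments in the coordinate directions, so the pointwise gain contains a term $\frac{2r}{\sqrt d}\,\|X-\hat X\|$. Second, it applies Jensen in the form $\E\bigl[[\,\cdot\,]_+\mid\hat X\bigr]\ge\bigl[\E[\,\cdot\,\mid\hat X]\bigr]_+$: conditioning on $\hat X$ kills the cross term $2\langle X-\hat X,f(t_0)-\hat X\rangle$ exactly by self-consistency, while $\E[\|X-\hat X\|\mid\hat X]$ survives and is bounded below on a set of positive probability. This is precisely where a one-directional spike fails: after conditioning, your leading term $2s\langle X-\hat X,v\rangle$ vanishes too. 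Finally, the attachment point is chosen by a pigeonhole over the grid $(f(k/p))_{1\le k\le p}$, so the localisation radius $\sim L/p$ shrinks together with $\epsilon\sim 1/p$ and the residual cost $\|f(t_p)-\hat X\|^2\le L^2/p^2$ becomes lower order.
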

\begin{proof}
	First of all, observe that $\L(f)=L$ since $G(L)>0$, according to Corollary \ref{cor:L(f)}.
	Assume that $\E[X|\hat{X}]=\hat{X}$ a.s..
	
For $\e\in[0,1], $ we set $\hat{X}_\e=(1-\e)\hat{X}$. Then,
		$$\|X-\hat{X}_\e\|^2=\|X-\hat{X}+
		\e\hat{X}\|^2=\|X-\hat{X}\|^2+\e^2\|\hat{X}\|^2+2\e\langle X-\hat{X},\hat{X}\rangle.$$ Since $\E[X|\hat{X}]=\hat{X}$ a.s., 
		$\E[X-\hat{X}|\hat{X}]=0$ a.s., and thus, $\E[\langle X-\hat{X},\hat{X}\rangle]=\E[\langle \E[X-\hat{X}|\hat{X}],\hat{X}\rangle]=0,$ so that \begin{equation}\label{eq:decriteps}
			\E[\|X-\hat{X}_\e\|^2]=\E[\|X-\hat{X}\|^2]+\e^2\E[\|\hat{X}\|^2].
		\end{equation}
		The random vector $\hat{X}_\e$ is taking its values in the image of $(1-\e)f$, which has length $(1-\e)L$.
		Observe that \begin{equation}\label{eq:hatXfini}
			\E[\|\hat{X}\|^2]<\infty,
		\end{equation}since $\E[\|X\|^2]<\infty$ and \begin{align*}
			\E[\|\hat{X}\|^2]&\leq 2\E[\|X-\hat{X}\|^2]+2\E[\|X\|^2]\\&\leq  2\E[\|X-f(0)\|^2]+2\E[\|X\|^2]\\&\leq 6\E[\|X\|^2]+4\|f(0)\|^2.
		\end{align*}
		We will show that, adding to $(1-\e)f$ a curve with length $\e L$, it is possible to  build $\hat{Y}_\e$ with $\E[\|X-\hat{Y}_\e\|^2]<\E[\|X-\hat{X}\|^2]$, 
		which contradicts the optimality of $f$.
	
	For $\e\in[0,1]$, let $f_\e=(1-\e)f.$ We then define $\hat{X}_{\e,t_0,r}$ as the variable $\hat{X}_{t_0,r}$ corresponding to $f_\e$. More precisely, similarly to the proof of Lemma \ref{lem:propG}, we define, for  $t_0\in[0,1]$ and $r>0$, the random  vector $\hat{Z}_{\e,t_0,r}$, with values in 
		$$\mathcal C (t_0,r)=\bigcup_{j=1}^d\{x\in\R^d: x^i=f_\e^i(t_0) \mbox{ for } i\neq j, |x^j-f^j_\e(t_0)|\leq r\},$$ by 
		$$\begin{cases}
		\hat Z^J_{\e,t_0,r}=f_\e^J(t_0)+r\land (X^J-f^J_\e(t_0))\I_{\{X^J\geq f^J_\e(t_0)\}}+(-r)\lor (X^J-f^J_\e(t_0))\I_{\{X^J< f^J_\e(t_0)\}},\\\mbox{where }J=\min\{i:|X^i-f^i_\e(t_0)|=\|X-f_\e(t_0)\|_{\infty}\}\\
		\hat Z^i_{\e,t_0,r}=f^i_\e(t_0) \mbox{ if }i\neq J, i=1,\dots,d.
		\end{cases}$$
		We set $$\hat X_{\e,t_0,r}=\hat X\I_{\{\|X-\hat X_\e\|\leq \|X-\hat Z_{\e,t_0,r}\|\}}+\hat Z_{\e,t_0,r}\I_{\{\|X-\hat X_\e\|>\|X-\hat Z_{\e,t_0,r}\|\}}.$$
		By the same calculation as in the proof of Lemma \ref{lem:propG}, we obtain 
		$$
		\|X-\hat X_\e\|^2-\|X-\hat{X}_{\e,t_0,r}\|^2\geq\left[-\|f_\e(t_0)-\hat X_\e\|^2+2\langle X-\hat X_\e,f_\e(t_0)-\hat X_\e\rangle+\frac{2r}{\sqrt{d}}\|X-f_\e(t_0)\|-r^2\right]_+. $$
		Since $\|X-f_{\e}(t_0)\|\geq \|X-f(t_0)\|-\e\|f(t_0)\|\geq \|X-\hat{X}\|-\e\|f(t_0)\|,$ we get
		\begin{multline*}
			\|X-\hat X_\e\|^2-\|X-\hat{X}_{\e,t_0,r}\|^2\geq \left[-(1-\e)^2\|f(t_0)-\hat X\|^2+2(1-\e)\langle X-\hat X_\e,f(t_0)-\hat X\rangle\right.\\\left.+\frac{2r}{\sqrt{d}}\|X-\hat{X}\|-\frac{2r}{\sqrt{d}}\e\|f(t_0)\|-r^2\right]_+.
		\end{multline*}
		Thus, \begin{align}\label{eq:minorlem2}
			&\E\left[\|X-\hat X_\e\|^2-\|X-\hat{X}_{\e,t_0,r}\|^2\Big| \hat{X}\right]\nonumber\\&\geq
			\left[-\|f(t_0)-\hat X\|^2+2(1-\e)\langle \E[X| \hat{X}]-\hat X_\e,f(t_0)-\hat X\rangle+\frac{2r}{\sqrt{d}}\E\left[\|X-\hat{X}\|\Big| \hat{X}\right]-\frac{2r}{\sqrt{d}}\e\|f(t_0)\|-r^2\right]_+\nonumber\\&=
			\left[-\|f(t_0)-\hat X\|^2+2(1-\e)\langle \e\hat{X},f(t_0)-\hat X\rangle+\frac{2r}{\sqrt{d}}\E\left[\|X-\hat{X}\|\Big| \hat{X}\right]-\frac{2r}{\sqrt{d}}\e\|f(t_0)\|-r^2\right]_+\nonumber\\&\geq 
			\left[-\|f(t_0)-\hat X\|^2-2\e\|\hat{X}\|\|f(t_0)-\hat X\|+\frac{2r}{\sqrt{d}}\E\left[\|X-\hat{X}\|\Big| \hat{X}\right]-\frac{2r}{\sqrt{d}}\e\|f(t_0)\|-r^2\right]_+. 
		\end{align}
		Besides, since $G(L)>0$, there exist $\delta>0$, $K<\infty,$ such that
		$$\eta=\P\left(\|\hat{X}\|\leq K, \E\left[\|X-\hat{X}\|\Big| \hat{X}\right]\geq \delta\right)>0.$$
		Moreover, for every $p\geq 1$, $\sum_{k=1}^p \I_{\{\|\hat{X}-f\left(\frac{k}{p}\right)\|\leq \frac{L}{p}\}}\geq 1$ since $f$ is $L$-Lipschitz. Consequently, $$ \sum_{k=1}^{p}\P\left(\|\hat{X}\|\leq K, \E\left[\|X-\hat{X}\|\Big | \hat{X}\right]\geq \delta,\left\|\hat{X}-f\left(\frac{k}{p}\right)\right\|\leq \frac{L}{p}\right)\geq \eta.$$ Hence, setting $$A_p=\left\{\|\hat{X}\|\leq K, \E\left[\|X-\hat{X}\|\Big| \hat{X}\right]\geq \delta,\left\|\hat{X}-f\left(\frac{k}{p}\right)\right\|\leq \frac{L}{p}\right\},$$ we see that there exists $t_p\in[0,1]$ such that $\P(A_p)\geq\frac{\eta}{p}$. From \eqref{eq:minorlem2}, we get
		\begin{align*}
			\E&\left[\|X-\hat X_\e\|^2-\|X-\hat{X}_{\e,t_p,r}\|^2\right]\\&\geq 
			\E\left[\I_{A_p}	\left[-\|f(t_p)-\hat X\|^2-2\e\|\hat{X}\|\|f(t_p)-\hat X\|+\frac{2r}{\sqrt{d}}\E\left[\|X-\hat{X}\|\Big| \hat{X}\right]-\frac{2r}{\sqrt{d}}\e\|f(t_p)\|-r^2\right]_+\right]\\&\geq
			\P(A_p)\left[-\frac{L^2}{p^2}-\frac{2\e K L}{p}+\frac{2r\delta}{\sqrt{d}}-\frac{2r\e M}{\sqrt{d}}-r^2\right],
		\end{align*}where $M=\sup_{t\in[0,1]}\|f(t)\|$.
		Since $\hat{X}_{\e,t_p,r}$ takes its values in $f_\e([0,1])\cup \mathcal C(\e,t_p,r)$, which is the image of a curve with length at most $(1-\e)L+4dr,$ then choosing $r$ such that $4dr=\e L$, we have
		\begin{align*}
			\E\left[\left\|X-\hat{X}_{\e,t_p,\frac{\e L}{4d}}\right\|^2\right]&\leq 
			\E\left[\|X-\hat X_\e\|^2\right]-\frac{\eta}{p}\left(-\frac{L^2}{p^2}-\frac{2 K L\e}{p}+\frac{ L\delta\e}{2d^{3/2}}-\frac{ML\e^2}{2d^{3/2}}-\frac{L^2\e^2}{16d^2}\right)\\&=
			\E[\|X-\hat{X}
			\|^2]+\e^2\E[\|\hat{X}\|^2]+\frac{\eta L^2}{p^3}+\frac{2\eta K L\e}{p^2}-\frac{ \eta L\delta\e}{2d^{3/2}p}+\frac{\eta ML\e^2}{2d^{3/2}p}-\frac{\eta L^2\e^2}{16d^2p},
		\end{align*}using \eqref{eq:decriteps}. Then, taking $\e=\frac{\rho}{p}$, we get 
		$$\E\left[\left\|X-\hat{X}_{\frac{\rho}{p},t_p,\frac{\rho L}{4dp}}\right\|^2\right]\leq 
		\E[\|X-\hat{X}
		\|^2]+\frac{\rho^2}{p^2}\E[\|\hat{X}\|^2]+\frac{\eta L^2}{p^3}+\frac{2\eta K L\rho}{p^3}-\frac{ \eta L\delta\rho}{2d^{3/2}p^2}+\frac{\eta ML\rho^2}{2d^{3/2}p^3}-\frac{\eta L^2\rho^2}{16d^2p^3}.$$
		If $\rho$ is small enough, then ${\rho^2}\E[\|\hat{X}\|^2]-\frac{ \eta L\delta\rho}{2d^{3/2}}<0$. Then, taking $p$ large enough, this leads to a random  vector $\hat{Y}$, with values in the image of a curve with length at most $L$, such that  $\E[\|X-\hat{Y}\|^2]<\E[\|X-\hat{X}\|^2]$.

\end{proof}

Equipped with lemmas \ref{lem:propG} and \ref{lem:noneq}, we can present the proof of the main result.

\subsection{Proof of Theorem \ref{theo:main}}

To obtain a length-constrained principal curve, we have to minimize a function  which may not be differentiable. We propose to  build a discrete approximation of the principal curve $f$, using a chain of points $v^n_1,\dots,v^n_n$, $n\geq 1$, in $\R^d$. For every $n\geq 1$, linking the points yields a polygonal curve $f_n$. The properties of the principal curve $f$ will be shown by passing to the limit.  The chain of points is obtained by minimizing a $k$-means-like  criterion, which is differentiable, under a length-constraint This criterion is based on the distances from the random vector $X$ to the $n$ points and not to the corresponding segments of the polygonal line $f_n$, which allows to simplify the computation of the gradients.

We have chosen to present the proof for open curves, that is in the case $\mathcal C_L=\{\phi:[0,1]\to \R^d, \L(\phi)\leq L\}$. It adapts straightforwardly to the case of closed curves, which turns out to be even simpler since there are no endpoints and so all points of the curve play the same role. Note that the normalization factor  ``$n-1$'' below becomes ``$n$'' in the closed curve context.

\subsubsection*{First insight into the proof}To facilitate understanding, we sketch the proof in a simpler case. Assume that $X$ has a density with respect to Lebesgue measure, and consider a polygonal line $f_n$ with vertices $v_1^n,\dots,v_n^n$ obtained by minimizing under length constraint the  criterion
\begin{equation}
F_n^0(x_1,\dots,x_n)= \E\left[\min_{1\leq i\leq n}\|X-x_i\|^2\right].\label{eq:crit}
\end{equation}
For $h=(h_1,\dots,h_n)\in(\R^d)^n$, $\nabla F_n^0.h=\sum_{i=1}^{n}\E\left[-2\langle X-\hat{X}_n,h_i\rangle\I_{\{\hat{X}=v_i^n\}}\right]$, where $\hat X$ is such that $\|X-\hat{X}\|=\min_{1\leq j\leq n}\|X-v_i^n\|$.
For differentiability, it is convenient to write the length constraint as follows: $$(n-1)\sum_{i=2}^{n}\|x_i-x_{i-1}\|^2\leq L^2.$$
 Let $\hat{t}_n$ be defined by  $\hat{t}_n=\frac{i-1}{n-1}$ on the event $\{\hat X=v_i^n\}$. For a test function $g$, set $h_i=g\big(\frac{i-1}{n-1}\big)$ for $i=1,\dots,n$.
Then, we obtain the Euler-Lagrange equation 
\begin{equation}\label{eq:euldis}
\E\left[\langle X-f_n(\hat{t}_n),g(\hat{t}_n)\rangle\right]=-\lambda_n\int_{[0,1]}\langle g(t),f_n''(dt)\rangle.
\end{equation}

Up to an extraction, $f_n$ converges uniformly to an optimal curve and $\hat{t}_n$ converges in distribution. Using the default of self-consistency \eqref{lem:noneq}, it may be shown that every limit point of the sequence $(\lambda_n)_{n\geq 1}$ is positive. Together with the discrete Euler-Lagrange equation \eqref{eq:euldis}, this allows to prove  that $f_n''$ converges weakly to a signed measure $f''$. Finally, the desired Euler-Lagrange equation is obtained as the limit of \eqref{eq:euldis}.

\subsubsection*{Complete proof}
Let us now start the complete proof of the theorem. 

First, some notation is in order.
	Let $Z$ be a standard $d$-dimensional Gaussian vector, independent of $X$. 
Let $(\zeta_n)$, $(\eta_n)$ and $(\e_n)$ be sequences of positive real numbers such that
$$ \zeta_n= \mathcal O(1/n), \quad \eta_n=\mathcal O(1/n),\quad
n\e_n\to\infty, \quad \e_n\to 0.$$
We also introduce i.i.d. random vectors $\xi_1^n,\dots,\xi_n^n$, independent of $X$ and $Z$,  with same distribution as a centered random  vector $\xi$ with compactly supported density, such that $\|\xi\|\leq \eta_n$.

We will construct a sequence of polygonal lines converging to the optimal curve $f$ by linking points $v^n_1,\dots,v^n_n$ obtained by minimization of a criterion generalizing \ref{eq:crit}. Proving  differentiability in this case is a little more involved in this case.

To begin with, since the random vector $X$ is not assumed to have a density with respect to Lebesgue measure, we convolve it with a  Gaussian random vector: we define, for $n\geq 1$,  $X_n=X+\zeta_n Z$. So, $X$ is approximated by a sequence $(X_n)_{n\geq 1}$ of continuous  random variables. 

For $1\leq i\leq n,$ let $$t_i^n:=\dfrac{i-1}{n-1}.$$
In order to be able to prove results which are true for any optimal curve $f$, we have to ensure that the points $v^n_1,\dots,v^n_n$, $n\geq 1$ are located on this curve $f$. To this aim, we add to the criterion \ref{eq:crit} a penalty proportional to $$\sum_{i=1}^n\|x_i-f(t_i^n)\|^2.$$ With this penalty, we cannot affirm any more that the $x_i$'s are pairwise distinct. 
To overcome this difficulty, a random vector $\xi^n_i$ is added to each $x_i$: the points $x_i+\xi^n_i$, approximating the $x_i$'s, are almost surely pairwise distinct.

The desired  chain of points is then defined, for $n\geq 1$, by minimizing in $x=(x_1,\dots,x_n)\in(\R^d)^n$ the criterion	\begin{equation}
 F_n(x_1,\dots,x_n)= \E\left[\min_{1\leq i\leq n}\|X_n-x_i-\xi_i^n\|^2\right]+\e_n\sum_{i=1}^n\|x_i-f(t_i^n)\|^2,\label{eq:crit-tot}
\end{equation}
under the constraint \begin{equation}\label{eq:constLvi}
(n-1)\sum_{i=2}^n\|x_i-x_{i-1}\|^2\leq L^2.
\end{equation}

\begin{lem}\label{lem:existvi}
	There exists $(v^n_1,\dots,v^n_n)\in(\R^d)^ n$, satisfying 
$$(n-1)\sum_{i=2}^n\|v^n_i-v^n_{i-1}\|^2\leq L^2,$$
 such that  $$F_n(v^n_1,\dots,v^n_n)=\min\bigg\{F_n(x_1,\dots,x_n); (n-1)\sum_{i=2}^n\|x_i-x_{i-1}\|^2\leq L^2\bigg\}.$$
\end{lem}	
Let  $\hat{X}_n^x$ be such that $\hat{X}_n^x\in\{x_1+\xi_1^n,\dots,x_n+\xi_n^n\}$ and 
\begin{equation}\label{eq:hatXnx}
\|X_n-\hat{X}_n^x\|=\min_{1\leq i\leq n}\|X_n-x_i-\xi_i^n\|
\end{equation} almost surely.
In the sequel,  $\hat{X}_n$ will stand for $\hat{X}_n^{(v^n_1,\dots,v^n_n)}$.

\begin{lem}\label{lem:F-fini}
\begin{equation*}\sup_{n\geq 1}F_n(v^n_1,\dots,v^n_n)<\infty.\end{equation*} 
\end{lem}

	We define the sequence $(f_n)_{n\geq 1}$ of polygonal lines approximating $f$, where each $f_n:[0,1]\to \R^d$, $n\geq 1$, is given by $$f_n(t)=v^n_i+(n-1)\left(t-t_i^n\right)(v^n_{i+1}-v^n_i), \quad t^n_i\leq t\leq t^n_{i+1},\quad 1\leq i \leq n-1. $$

	This function $f_n$ is absolutely continuous and we have $f '_n(t)=(n-1)(v^n_{i+1}-v^n_i)$ for  $t\in\big(t_i^n,t_{i+1}^n\big)$.
	Using the definition of $f '_n$, we obtain the following  regularity properties of $f_n$.
	\begin{lem}\label{lem:fn'} For $n\geq 1$, the curve $f_n$  satisfies:\begin{enumerate}
			\item $\L(f_n)\le  L.$
			\item For all $t,t'\in[0,1]$, $\|f_n(t)-f_n(t')\|\leq L\sqrt{|t-t'|}.$
		\end{enumerate}
	\end{lem}

Asymptotically, the penalty term ensuring that the points $v^n_1,\dots,v^n_n$, $n\geq 1$ belong to the curve $f$ can be neglected.
\begin{lem}\label{lem:pen}
	There exists $c\geq 0$ such that, for all $n\geq 1,$ \begin{equation*}
	\e_n\sum_{i=1}^{n}\|v^n_i-f(t_i^n)\|^2\leq\frac{c}{n}.\end{equation*}	
\end{lem}
	
		\begin{lem}\label{lem:conv-fn}
		The sequence $(f_n)_{n\geq 1}$ converges uniformly to the curve $f$.
	\end{lem}

	Let $\hat{t}_n = t_i^n$ on the event $\{ \hat{X}_n = v^n_i+\xi_i^n\}$, $1 \leq i \leq n$. Note that the sequence $(\hat{t}_{n})_{n\geq 1}$ is bounded.
Thus, up to extending the probability space  $(\Omega, \mathcal F, \P)$, and extracting a subsequence,  we may assume that $(X_n,\hat t_n)$ converge in distribution to a tuple $(X,\hat t)$.
This implies the next result.

\begin{lem}\label{lem:hat-t}There exists a random variable $\hat t$ with values in $[0,1]$, defined on an extension of the probability space  $(\Omega, \mathcal F, \P)$, such that
$$\|X-f(\hat{t})\|=d(X,\mbox{Im}f)\quad a.s.$$	
\end{lem}

In order to be able to state a first order  Euler-Lagrange equation for the criterion \eqref{eq:crit-tot}, we show that the quantity $\E\left[\min_{1\leq i\leq n}\|X_n-x_i-\xi_i^n\|^2\right]$ is differentiable at $x_i$.
Recall the definition \eqref{eq:hatXnx} of $\hat{X}_n^x$. 
 	\begin{lem}\label{lem:diff}The function $(x_1,\dots,x_n)\mapsto \E\left[\min_{1\leq i\leq n}\|X_n-x_i-\xi_i^n\|^2\right]$ is differentiable, and, for $1\leq i\leq n$, the gradient  with respect to $x_i$ is given by
	$$\frac{\partial}{\partial x_i}\E\left[\min_{1\leq j\leq n}\|X_n-x_j-\xi_j^n\|^2\right]=-2\E\left[(X_n-\hat{X}_n^x)\I_{\{\hat{X}_n^x=x_i+\xi_i\}}\right].$$
\end{lem}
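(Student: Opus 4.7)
The strategy is to compute the directional derivative pointwise almost surely and then pass the limit inside the expectation by dominated convergence. Write $h(x_1,\dots,x_n) = \min_{1\le j\le n}\|X_n - x_j - \xi_j^n\|^2$, fix $i\in\{1,\dots,n\}$, a direction $v\in\R^d$, and examine the difference quotient $\bigl(h(x_1,\dots,x_i+tv,\dots,x_n) - h(x_1,\dots,x_n)\bigr)/t$ as $t\to 0$. The main obstacle is that the $\min$ is non-smooth at ties between two coordinates; the resolution is the ``genericity'' provided by the regularization, as explained below.

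\textbf{Step 1: almost sure uniqueness of the minimizer.} By construction $X_n = X + \zeta_n Z$ with $Z\sim\Nr(0,I_d)$ independent of $X$, so $X_n$ admits a density on $\R^d$. For any fixed $i\ne j$, conditional on $(\xi_1^n,\dots,\xi_n^n)$, the event $\{\|X_n - x_i - \xi_i^n\| = \|X_n - x_j - \xi_j^n\|\}$ requires $X_n$ to lie on a fixed affine hyperplane, which has zero Lebesgue measure; since $X_n$ has a density, this conditional probability is zero. Integrating gives $\P(\|X_n - x_i - \xi_i^n\| = \|X_n - x_j - \xi_j^n\|) = 0$, and taking a union over the finitely many pairs $(i,j)$ shows that almost surely the $n$ distances are pairwise distinct. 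In particular $\hat X_n^x$ in \eqref{eq:hatXnx} is uniquely defined a.s.

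\textbf{Step 2: pointwise derivative.} On the almost sure event where there is a unique minimizer $i^* = i^*(x)$, by continuity of the distances in $t$, for $|t|$ small enough the index $i^*$ remains the unique minimizer of $j\mapsto \|X_n - x_j - t v\,\delta_{ij} - \xi_j^n\|^2$. Two cases arise: if $i^*\ne i$, the minimum does not depend on $x_i$ and the difference quotient is $0$; if $i^* = i$, the quotient equals $\bigl(\|X_n - x_i - tv - \xi_i^n\|^2 - \|X_n - x_i - \xi_i^n\|^2\bigr)/t$, whose limit is $-2\langle X_n - x_i - \xi_i^n, v\rangle$. Combining, the a.s.\ directional derivative is $-2\langle X_n - \hat X_n^x, v\rangle\,\I_{\{\hat X_n^x = x_i+\xi_i^n\}}$.

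\textbf{Step 3: domination and conclusion.} For the domination, note that if $a_j = \|X_n-x_j-\xi_j^n-tv\,\delta_{ij}\|$ and $b_j = \|X_n-x_j-\xi_j^n\|$, then $|a_j - b_j|\le |t|\,\|v\|$, hence $|\min_j a_j - \min_j b_j|\le |t|\,\|v\|$ and therefore
\begin{equation*}
\Bigl|\tfrac1t\bigl(\min_j a_j^2 - \min_j b_j^2\bigr)\Bigr|
\le \|v\|\bigl(\min_j a_j + \min_j b_j\bigr)
\le \|v\|\bigl(2\|X_n - x_i - \xi_i^n\| + |t|\,\|v\|\bigr).
\end{equation*}
Since $\E[\|X_n\|^2]<\infty$ and $\|\xi_i^n\|\le \eta_n$, the right-hand side is bounded uniformly in $|t|\le 1$ by an $L^1$ random variable. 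Dominated convergence then yields
\begin{equation*}
\frac{\partial}{\partial x_i}\E\Bigl[\min_{1\le j\le n}\|X_n - x_j - \xi_j^n\|^2\Bigr]\cdot v = -2\,\E\bigl[\langle X_n - \hat X_n^x, v\rangle\,\I_{\{\hat X_n^x = x_i+\xi_i^n\}}\bigr],
\end{equation*}
which gives the claimed gradient formula after identifying the direction $v$. The hardest part is really the uniqueness argument in Step 1; once the minimizer is a.s.\ unique, the envelope argument and the domination in Step 3 are routine.
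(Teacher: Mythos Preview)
Your approach is essentially the same as the paper's: establish a.s.\ uniqueness of the minimizer via the density of $X_n$, compute the pointwise derivative on that event, and pass to the expectation by dominated convergence. Two small technical points are worth tightening. First, in Step~1 your hyperplane argument implicitly assumes $x_i+\xi_i^n\neq x_j+\xi_j^n$; otherwise the equidistance set is all of $\R^d$, not a hyperplane. This is harmless since $\xi_i^n-\xi_j^n$ has a density, but you should say so (the paper does). Second, as written you only perturb one block $x_i\mapsto x_i+tv$ and obtain the partial gradients, i.e.\ G\^ateaux differentiability; the paper instead takes an arbitrary sequence $x^{(k)}\to x$ in $(\R^d)^n$ and shows the full Fr\'echet remainder tends to zero, which is what ``differentiable'' means here. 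Your domination bound in Step~3 adapts immediately to that setting (replace $|t|\,\|v\|$ by $\|x^{(k)}-x\|$), so the fix is routine.
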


The Lagrange multiplier method then leads to the next system of equations satisfied by  $v^n_1,\dots,v^n_n$.

\begin{lem}\label{lem:eqn}For $n\geq 1$, there exists a Lagrange multiplier $\lambda_n\geq 0$ such that
		$$\begin{cases}
	&-\E\left[(X_n-\hat{X}_n)\I_{\{\hat{X}_n=v^n_i+\xi_i^n\}}\right]+\e_n\big(v^n_i-f(t_i^n)\big)+\lambda_n(n-1)(v^n_i-v^n_{i-1}-(v^n_{i+1}-v^n_i))=0,\\&\hfill 2\leq i\leq n-1,\\
	&-\E\left[(X_n-\hat{X}_n)\I_{\{\hat{X}_n=v^n_1+\xi_1^n\}}\right]+\e_n(v^n_1-f(0))-\lambda_n(n-1)(v^n_2-v^n_1)=0,\\
	&-\E\left[(X_n-\hat{X}_n)\I_{\{\hat{X}_n=v^n_n+\xi_n^n\}}\right]+\e_n(v^n_n-f(1))+\lambda_n(n-1)(v^n_n-v^n_{n-1})=0.
	\end{cases}$$
\end{lem}

	\begin{lem}\label{lem:lambda0}
If $\lambda$ is a limit point of the sequence $(\lambda_n)_{n\geq 1}$, then $\lambda\in (0,\infty]$.
	\end{lem}

Hence, up to an extraction, we may assume that the sequence $(\lambda_n)_{n\geq 1}$ converges to a limit $\lambda \in (0,\infty]$.

Let $\delta_\ell$ denote the Dirac mass at $\ell$.	For every $n\geq 2$, we define  $f ''_n$ on $[0,1]$ by \begin{equation}
\label{eq:fn''}
f ''_n=(n-1)\left[\sum_{i=2}^{n-1}(v^n_{i+1}-v^n_i-(v^n_i-v^n_
{i-1}))\delta_{t_i^n}+(v^n_2-v^n_1)\delta_0-(v^n_n-v^n_{n-1})\delta_1\right],
\end{equation}
which is a vector-valued signed measure.

	\begin{lem}
		\label{lem:reg}
		The sequence $(f''_n)_{n\geq 1}$ converges weakly to a signed measure $f''$ on $[0,1]$, with values in $\R^d$, which is the second derivative of $f$.
The following regularity properties hold:
		\begin{itemize}	
			\item $f$ is right-differentiable on $[0,1)$, $\|f '_r(t)\|=L$ for all $t\in[0,1)$,
			\item $f$ is left-differentiable on $(0,1]$, $\|f '_\ell(t)\|=L$ for all $t\in (0,1]$,
			\item $f ''((s,t])=f'_r(t)-f'_r(s)$ for all $0\leq s\leq t<1$,
			\item $f ''([0,1])= 0$. 
	
			\item $f''(\{0\})=f'_r(0)$,
			\item $f''(\{1\})=-f'_\ell(1)$.
		\end{itemize}
	\end{lem}
These properties imply in particular that $\lambda$ is finite. 
\begin{lem}\label{lem:lafini}
We have	$\lambda<\infty$.
\end{lem}
Finally, collecting all the results allows to derive the Euler-Lagrange equation.
\begin{lem}\label{lem:equation}
	For every bounded Borel function $g:[0,1]\to \R^d$, 
	$$
	\E\left[\langle X-f(\hat{t}), g(\hat{t})\rangle\right]=-\lambda\int_{[0,1]}\langle g(t), f ''(dt)\rangle.
	$$ Moreover, $\lambda $ depends only on the curve $f$.
\end{lem}

\begin{proof}[Proof of Lemma \ref{lem:existvi}]
Since $\E[\|X_n\|^2]\leq 2 \E[\|X\|^2]+2 d\zeta_n^2<\infty$ and $\E[\|\xi\|^2]\leq\eta_n^2<\infty$, $F_n$ takes its values in $[0,\infty)$ and is continuous.
	The constraint \eqref{eq:constLvi} defines a nonempty closed set $D_n$.
	Since
	$$ \lim_{ \substack{\|x_1\|+\cdots+\|x_n\| \to\infty\\
	(x_1,\dots,x_n)\in D_n} } F_n(x_1,\dots,x_n) =\infty, $$
	the optimization  problem reduces thus to minimizing a continuous function on a compact set.
	
\end{proof}

\begin{proof}[Proof of Lemma \ref{lem:F-fini}]
	Recall that, for all $t,t'\in[0,1]$, $\|f(t)-f(t')\|\leq L|t-t'|$. Hence, we have \begin{equation*}\label{eq:Lfi}
		(n-1)\sum_{i=2}^{n}\|f(t_i^n)-f(t_{i-1}^n)\|^2\leq L^2,
	\end{equation*} and consequently, we may consider 
	$(x_1,\dots,x_n)=(f(t_1^n),\dots, f(t_n^n))$. We see that \begin{align*}
		F_n(v^n_1,\dots,v^n_n)&\leq \E\left[\|X_n-f(0)-\xi_1^n\|^2\right]\\
		&\leq 2\E\left[\|X_n-\xi_1^n\|^2\right]+2\|f(0)\|^2\\
		&\leq 2\E\left[\|X\|^2\right]+{ 2d\zeta_n^2}+2\eta_n^2+2\|f(0)\|^2.
	\end{align*}
	\end{proof}

\begin{proof}[Proof of Lemma \ref{lem:fn'}]
	 By definition of $f_n'$, and using  that $v^n_1,\dots,v^n_n$ satisfy  constraint \eqref{eq:constLvi}, we have $$\int_0^1\|f '_n(t)\|^2 dt=\sum_{i=1}^{n-1}(n-1)^2\|v^n_{i+1}-v^n_i\|^2\times\frac{1}{n-1}=(n-1)\sum_{i=1}^{n-1}\|v^n_{i+1}-v^n_i\|^2\leq L^2.$$ 
	Hence,	\begin{equation*}\label{eq:Lfn}
	\L(f_n)\le \Bigl(\int_0^1\|f '_n(t)\|^2 dt\Bigr)^{1/2} \le L,
	\end{equation*} and for all $t,t'\in[0,1]$, 
	\begin{equation*} \label{equiuc}
		\|f_n(t)-f_n(t')\|=\Big\|\int_{0}^{1}\I_{[t\land t',t\lor t']}f_n '(u)du\Big\|\leq L\sqrt{|t-t'|}.
	\end{equation*}

\end{proof}

	\begin{proof}[Proof of Lemma \ref{lem:pen}]
The aim is to show that there exists $c\geq 0$ such that, for all $n\geq 1,$ \begin{equation*}
		\e_n\sum_{i=1}^{n}\|v^n_i-f(t_i^n)\|^2\leq\frac{c}{n}.
	\end{equation*} 
	The following upper bound will be useful:
	\begin{equation*}
		\left|\min_{1\leq i\leq n}\left\|X_n-f(t_i^n)-\xi_i^n\right\|-\min_{1\leq i\leq n}\left\|X-f(t_i^n)\right\|\right|\leq \zeta_n\|Z\|+\eta_n.
	\end{equation*}
	By definition of $(v^n_1,\dots,v^n_n)$, since $(f(t_1^n),\dots, f(t_n^n))$ satisfies constraint \ref{eq:constLvi} as already mentioned in the proof of Lemma \ref{lem:F-fini}, we may write
	$$F_n(v^n_1,\dots,v^n_n)\leq \E\left[\min_{1\leq i\leq n}\left\|X_n-f(t_i^n)-\xi_i^n\right\|^2\right].$$

	Observe  that \begin{align*}
		&\left|\min_{1\leq i\leq n}\|X_n-f(t_i^n)-\xi_i^n\|-\min_{t\in[0,1]}\|X-f(t)\|\right|\\&\leq
		\left|\min_{1\leq i\leq n}\|X_n-f(t_i^n)-\xi_i^n\|-\min_{1\leq i\leq n}\|X-f(t_i^n)\|\right|+
		\left|\min_{1\leq i\leq n}\left\|X-f(t_i^n)\right\|-\min_{t\in[0,1]}\|X-f(t)\|\right|
		\\&\leq \zeta_n\|Z\|+\eta_n+ \frac{L}{n-1},
	\end{align*}
	so that \begin{multline*}
		\min_{1\leq i\leq n}\left\|X_n-f(t_i^n)-\xi_i^n\right\|^2\leq \min_{t\in[0,1]}\|X-f(t)\|^2+\left(\eta_n+\zeta_n\|Z\|+\frac{L}{n-1}\right)^2\\+2\left(\eta_n+\zeta_n\|Z\|+\frac{L}{n-1}\right)\min_{t\in[0,1]}\|X-f(t)\|.
	\end{multline*}
	Consequently, there exists $c_1\geq 0$, such that \begin{equation*}F_n(v^n_1,\dots,v^n_n)\leq G(L)+\frac{c_1}{n}.\end{equation*}
	
	Besides, 	$$F_n(v^n_1,\dots,v^n_n)=\E\left[\min_{1\leq i\leq n}\left\|X_n-f_n(t_i^n)-\xi_i^n\right\|^2\right]+\e_n\sum_{i=1}^n\|f_n(t_i^n)-f(t_i^n)\|^2,$$
	and, writing \begin{align*}
		&\left|\min_{1\leq i\leq n}\left\|X_n-f_n(t_i^n)-\xi_i^n\right\|^2- \min_{1\leq i\leq n}\left\|X-f_n(t_i^n)\right\|^2\right|\\&\leq 
		\left|\min_{1\leq i\leq n}\left\|X_n-f_n(t_i^n)-\xi_i^n\right\|-\min_{1\leq i\leq n}\left\|X-f_n(t_i^n)\right\|\right|\times\left(\min_{1\leq i\leq n}\left\|X_n-f_n(t_i^n)-\xi_i^n\right\|+\min_{1\leq i\leq n}\left\|X-f_n(t_i^n)\right\|\right)\\&\leq
		\bigg( \zeta_n\|Z\|+\eta_n\bigg)\bigg(\zeta_n\|Z\|+\eta_n+2\min_{1\leq i\leq n}\left\|X-f_n(t_i^n)\right\|\bigg)\\&=
		\bigg( \zeta_n\|Z\|+\eta_n\bigg)^2+2\bigg( \zeta_n\|Z\|+\eta_n\bigg)\min_{1\leq i\leq n}\left\|X-f_n(t_i^n)\right\|,
	\end{align*}
	we obtain
	\begin{align*}
		F_n(v^n_1,\dots,v^n_n)&\geq 
		\E\left[\min_{1\leq i\leq n}\left\|X-f_n(t_i^n)\right\|^2\right]-\E\left[( \zeta_n\|Z\|+\eta_n)^2\right]-2( \zeta_n\E[\|Z\|]+\eta_n)\E\left[\min_{1\leq i\leq n}\left\|X-f_n(t_i^n)\right\|\right]\\&\quad+\e_n\sum_{i=1}^n\|f_n(t_i^n)-f(t_i^n)\|^2
		\\&\geq 
		\E\left[\min_{t\in[0,1]}\left\|X-f_n(t)\right\|^2\right]- \zeta_n^2\E[\|Z\|^2]-\eta_n^2-2\eta_n\zeta_n\E[\|Z\|]\\&\quad-2( \zeta_n\E[\|Z\|]+\eta_n)\E\left[\min_{1\leq i\leq n}\left\|X-f_n(t_i^n)\right\|\right]+\e_n\sum_{i=1}^n\|f_n(t_i^n)-f(t_i^n)\|^2
		\\&\geq G(L)-\frac{c_2}{n}+\e_n\sum_{i=1}^n\|f_n(t_i^n)-f(t_i^n)\|^2,
	\end{align*} for some constant $c_2\geq 0$.
	Indeed, $\L(f_n)\leq L$ according to point 1 in Lemma \ref{lem:fn'}, which allows to lower bound $\E\left[\min_{t\in[0,1]}\left\|X-f_n(t)\right\|^2\right]$ by $G(L)$, and moreover, 
	$\E\left[\min_{1\leq i\leq n}\left\|X-f_n(t_i^n)\right\|\right]$ is bounded since $(f_n)_{n\geq 1}$ is uniformly bounded and $\E[\|X\|^2]<\infty$.
	Thus, there exists a constant $c_3$ such that  $G(L)-\frac{c_3}{n}+\e_n\sum_{i=1}^n\big\|f_n\big(t_i^n\big)-f\big(t_i^n\big)\big\|^2\leq G(L)+\frac{c_3}{n}$, which shows that $\e_n\sum_{i=1}^n\big\|f_n\big(t_i^n\big)-f(t_i^n)\big\|^2\leq \frac{2c_3}{n}.$
	\end{proof}
	
\begin{proof}[Proof of Lemma \ref{lem:conv-fn}]

	Point 2 in Lemma \ref{lem:fn'} and Lemma \ref{lem:pen} 
	, together with the assumption $n\e_n\to \infty$, imply that the sequence $(f_n)_{n\geq 1}$ converges uniformly to the curve $f$.
	
\end{proof}

	\begin{proof}[Proof of Lemma \ref{lem:hat-t}]

	For every $n\geq 1$, \begin{align*}
		&\Big|\|X_{n}-f_{n}(\hat{t}_{n})\|-\min_{1\leq i\leq n}\big\|X_{n}-f_{n}(t_i^ {n})\big\|\Big|\\&\quad\leq\Big|\|X_{n}-f_{n}(\hat{t}_{n})\|-\min_{1\leq i\leq n} \big\|X_{n}-f_{n}\big(t_i^{n}\big)-\xi_i^{n}\big\|\Big|+\left|\min_{1\leq i\leq n} \big\|X_{n}-f_{n}\big(t_i^{n}\big)-\xi_i^{n}\big\|-\min_{1\leq i\leq n} \big\|X_{n}-f_{n}\big(t_i^{n}\big)\big\|\right|\quad 
		\\&\quad\leq\Big|\|X_{n}-f_{n}(\hat{t}_{n})\|-\sum_{i=1}^n\|X_{n}-f_{n}(\hat{t}_{n})-\xi_i^{n}\|\I_{\{\hat{X}_n=f_n(t_i^n)+\xi_i^n\}}\Big|+\eta_n\quad 
		\\&\quad\leq\sum_{i=1}^n\Big|\|X_{n}-f_{n}(\hat{t}_{n})\|-\|X_{n}-f_{n}(\hat{t}_{n})-\xi_i^{n}\|\Big|\I_{\{\hat{X}_n=f_n(t_i^n)+\xi_i^n\}}+\eta_n\quad 
		\\&\quad\leq 2\eta_n,
	\end{align*}
	Hence, we obtain $$\|X-f(\hat{t})\|=\min_{t\in[0,1]}\|X-f(t)\|\quad a.s.$$
	
\end{proof}

	\begin{proof}[Proof of Lemma \ref{lem:diff}]
		For $x=(x_1,\dots,x_n)\in(\R^d)^n$ and $\omega\in \Omega$, we set 
		$$G_n(x,\omega):=\min_{1\leq i\leq n}\|X_n(\omega)-x_i-\xi_i^n(\omega)\|^2.$$
		For every $x$, since the distribution of $X_n$ gives zero measure to  affine hyperplanes of $\R^ d$ and
		the vectors $x_i+\xi_i^n $, $1\leq i\leq n$, are mutually distinct $\P(d\omega)$ almost surely, we have $\P(d\omega)$ almost surely, $$G_n(x,\omega)
		=\sum_{i=1}^{n}\|X_n(\omega)-x_i-\xi_i^n(\omega)\|^2\I_{\{
			\| {X}_n(\omega)-x_i-\xi_i^n(\omega)\| < \min_{j\not=i} \|{X}_n(\omega)-x_j-\xi_j^n(\omega)\|
			\}}.$$
		
		For every $x\in(\R^ d)^ n$, $\P(d\omega)$ almost surely, $y\mapsto G_n(y,\omega)$ is differentiable at $x$ and for $1\leq i\leq n$,
		\begin{align*}\frac{\partial }{\partial x_i}G_n(x,\omega)&=-	2(X_n(\omega)-x_i-\xi_i^n(\omega))\I_{\{
				\| {X}_n(\omega)-x_i-\xi_i^n(\omega)\| < \min_{j\not=i} \|{X}_n(\omega)-x_j-\xi_j^n(\omega)\|\}}.
			\\&=
			-2(X_n(\omega)-\hat{X}_n^x(\omega))\I_{\{\hat{X}_n^x(\omega)=x_i+\xi_i^n(\omega)\}}.
		\end{align*}
		For every $u=(u_1,\dots,u_n)\in(\R^d)^n$, we set $\|u\|=(\sum_{i=1}^{n}\|u_i\|^2)^{1/2}.$
		 Let $x^{(k)}=(x_1^{(k)},\dots,x_n^{(k)})$ be a sequence tending to $x=(x_1,\dots,x_n)\in(\R^d)^n$ as $k$ tends to infinity. Then, 	
		$$\left[G_n(x^{(k)},\omega)-G_n(x,\omega)-\sum_{i=1}^{n}\left\langle \frac{\partial }{\partial x_i}G_n(x,\omega),x_i^{(k)}-x_i\right\rangle\right]\times \frac 1{\|x^{(k)}-x\|}$$ converges  $\P(d\omega)$ almost surely to 0 as $k$ tends to infinity.
		Moreover, 
		\begin{align*}
			&\left| G_n(x,\cdot)-G_n(x^{(k)},\cdot)  \right|
			\\&\quad= \Big(\min_{1\leq i\leq n}\|X_n-x_i-\xi_i^n\|+\min_{1\leq i\leq n}\|X_n-x^{(k)}_i-\xi_i^n\|\Big)\Big|\min_{1\leq i\leq n}\|X_n-x_i-\xi_i^n\|-\min_{1\leq i\leq n}\|X_n-x^{(k)}_i-\xi_i^n\|\Big|\\&\quad\leq 2\left(\|X_n\|+\eta_n+\|x_1\|+\|x_1^{(k)}\|\right)\max_{1\leq i\leq n}\|x_i-x^{(k)}_i\|,
		\end{align*}
		so that $$ \frac{\left| G_n(x,\cdot)-G_n(x^{(k)},\cdot)  \right|}{\|x-x^{(k)}\|}\leq C(\|X_n\|+1),$$ where $C$ is a constant which does not depend on $k$. Similarly, we have, for $1\leq i\leq n$, $$\left\|\frac{\partial}{\partial x_i}G_n(x,\cdot)\right\|\leq C'(\|X_n\|+1),$$
		where $C'$ does not depend on $k$, and, thus, \begin{align*}
			\frac{1}{\|x-x^{(k)}\|}\left|\sum_{i=1}^{n}\left\langle \frac{\partial }{\partial x_i}G_n(x,),x_i^{(k)}-x_i\right\rangle\right|&\leq C'(\|X_n\|+1)\frac{\sum_{i=1}^{n}\|x_i^{(k)}-x_i\|}{\|x-x^{(k)}\|}\\&\leq C'\sqrt{n}(\|X_n\|+1).
		\end{align*}
		Since  $\E[\|X_n\|]<\infty$, the result follows from  Lebesgue's dominated convergence theorem.
	\end{proof}

	\begin{proof}[Proof of Lemma \ref{lem:eqn}]
	By Lemma \ref{lem:diff}, we obtain that $F_n$ is differentiable, and for $1\leq i\leq n$, the gradient  with respect to $x_i$ is given by 
	$$\frac{\partial}{\partial x_i}F_n(x_1,\dots,x_n)=-2\E\left[(X_n-\hat{X}_n^x)\I_{\{\hat{X}_n^x=x_i+\xi_i^n\}}\right]+2\e_n\big(x_i-f(t_i^n)\big),\quad 1\leq i\leq n.$$ 
	
	Consequently, considering 
	the  minimization of $F_n$ under the length constraint \eqref{eq:constLvi}, there exists a Lagrange multiplier $\lambda_n\geq0$ such that 
	$$\begin{cases}
	&-2\E\left[(X_n-\hat{X}_n)\I_{\{\hat{X}_n=v^n_i+\xi_i^n\}}\right]+2\e_n\big(v^n_i-f(t_i^n)\big)+2\lambda_n(n-1)(v^n_i-v^n_{i-1}-(v^n_{i+1}-v^n_i))=0,\\&\hfill 2\leq i\leq n-1,\\
	&-2\E\left[(X_n-\hat{X}_n)\I_{\{\hat{X}_n=v^n_1+\xi_1^n\}}\right]+2\e_n(v^n_1-f(0))-2\lambda_n(n-1)(v^n_2-v^n_1)=0,\\
	&-2\E\left[(X_n-\hat{X}_n)\I_{\{\hat{X}_n=v^n_n+\xi_n^n\}}\right]+2\e_n(v^n_n-f(1))+2\lambda_n(n-1)(v^n_n-v^n_{n-1})=0,
	\end{cases}$$
	that is,
	$$\begin{cases}
	&-\E\left[(X_n-\hat{X}_n)\I_{\{\hat{X}_n=v^n_i+\xi_i^n\}}\right]+\e_n\big(v^n_i-f(t_i^n)\big)+\lambda_n(n-1)(v^n_i-v^n_{i-1}-(v^n_{i+1}-v^n_i))=0,\\&\hfill 2\leq i\leq n-1,\\
	&-\E\left[(X_n-\hat{X}_n)\I_{\{\hat{X}_n=v^n_1+\xi_1^n\}}\right]+\e_n(v^n_1-f(0))-\lambda_n(n-1)(v^n_2-v^n_1)=0,\\
	&-\E\left[(X_n-\hat{X}_n)\I_{\{\hat{X}_n=v^n_n+\xi_n^n\}}\right]+\e_n(v^n_n-f(1))+\lambda_n(n-1)(v^n_n-v^n_{n-1})=0.
	\end{cases}$$
	\end{proof}

	\begin{proof}[Proof of Lemma \ref{lem:lambda0}]

	Let $g:[0,1]\to\R^d$ be an absolutely continuous function such that $\int_{0}^{1}\|g'(t)\|^2 dt<\infty$. For $n\geq 1$, we may write 
	\begin{align}
		\E&[\langle X_n-f_n(\hat{t}_n), g(\hat{t}_n)\rangle]\nonumber\\&=\sum_{i=1}^{n}\left\langle\E\left[(X_n-\hat{X}_n+
		\xi_i^n)\I_{\{\hat{X}_n=v^n_i+
			\xi_i^n\}}\right], g(t_i^n)\right\rangle\nonumber \\&=
		\sum_{i=1}^{n}\left\langle \E\left[(X_n-\hat{X}_n
		)\I_{\{\hat{X}_n=v^n_i+
			\xi_i^n\}}\right], g(t_i^n)\right\rangle+\sum_{i=1}^{n}\left\langle\E\left[
		\xi_i^n\I_{\{\hat{X}_n=v^n_i+
			\xi_i^n\}}\right], g(t_i^n)\right\rangle\nonumber\\
		&=\sum_{i=1}^{n}\llangle \E\left[
		\xi_i^n\I_{\{\hat{X}_n=v^n_i+
			\xi_i^n\}}\right], g(t_i^n)\rrangle+\e_n\sum_{i=1}^{n}\langle v^n_i-f(t_i^n), g(t_i^n)\rangle\nonumber\\&\quad+\lambda_n(n-1)\left[-\langle v^n_2-v^n_1, g(0)\rangle+\sum_{i=2}^{n-1}\langle v^n_{i}-v^n_{i-1}-(v^n_{i+1}-v^n_i), g(t_i^n)\rangle+\langle v^n_n-v^n_{n-1}, g(1)\rangle\right]\nonumber\\&=\sum_{i=1}^{n}\llangle \E\left[
		\xi_i^n\I_{\{\hat{X}_n=v^n_i+
			\xi_i^n\}}\right], g(t_i^n)\rrangle+\e_n\sum_{i=1}^{n}\langle v^n_i-f(t_i^n), g(t_i^n)\rangle\nonumber\\&\quad+\lambda_n(n-1)\Bigg[\sum_{i=1}^{n-2}\langle v^n_{i+1}-v^n_{i}, g(t_{i+1}^n)\rangle-\sum_{i=2}^{n-1}\langle v^n_{i+1}-v^n_i, g(t_i^n)\rangle -\langle v^n_2-v^n_1, g(0)\rangle\nonumber\\&\quad+\langle v^n_n-v^n_{n-1}, g(1)\rangle\Bigg]\nonumber\\&=\sum_{i=1}^{n}\llangle \E\left[
		\xi_i^n\I_{\{\hat{X}_n=v^n_i+
			\xi_i^n\}}\right], g(t_i^n)\rrangle\nonumber\\&\quad+\e_n\sum_{i=1}^{n}\langle v^n_i-f(t_i^n), g(t_i^n)\rangle+\lambda_n(n-1)\sum_{i=1}^{n-1}\langle v^n_{i+1}-v^n_i,g(t_{i+1}^n)-g(t_i^n)\rangle.\label{eq:suml}
	\end{align}
	Note first that
	\begin{align}
		\left|\sum_{i=1}^{n}\llangle \E\left[
		\xi_i^n\I_{\{\hat{X}_n=v^n_i+
			\xi_i^n\}}\right], g(t_i^n)\rrangle\right|&\leq\eta_n\|g\|_\infty
		\sum_{i=1}^{n}\E\left[
		\I_{\{\hat{X}_n=v^n_i+
			\xi_i^n\}}\right]\nonumber\\&=\eta_n\|g\|_\infty.\label{eq:Oetan}
	\end{align}
	Then,
	\begin{align}
		\left| \e_n\sum_{i=1}^{n}\langle v^n_i-f(t_i^n), g(t_i^n)\rangle\right|&\leq \e_n \sum_{i=1}^{n} \|v^n_i-f(t_i^n)\|\|g\|_\infty\nonumber\\
		&\leq \e_n \Bigl(\sum_{i=1}^{n}\|v^n_i-f(t_i^n)\|^2 \bigr)^{1/2}\sqrt{n}\|g\|_\infty\nonumber\\&\leq \sqrt{c\e_n}\|g\|_\infty,\label{eq:Oepsn}
	\end{align}according to Lemma \ref{lem:pen}. Regarding the last term, we may write 
	\begin{align*}
		\left|(n-1)\sum_{i=1}^{n-1}\langle v^n_{i+1}-v^n_i,g(t_{i+1}^n)-g(t_i^n)\rangle\right|&\leq (n-1) \left[\sum_{i=1}^{n-1}\left\|v^n_{i+1}-v^n_i\right\|^2\sum_{i=1}^{n-1}\left\|g(t_{i+1}^n)-g(t_i^n)\right\|^2\right]^{1/2}\\&\leq
		L\sqrt{n-1}\left[\sum_{i=1}^{n-1}\Big\|\int_{t_i^n}^{t_{i+1}^n}g'(t)dt\Big\|^2\right]^{1/2}\\&\leq L \left[\int_{0}^{1}\|g'(t)\|^2dt\right]^{1/2}.\end{align*}
	
	Thus, if $h:\R^d\to\R^d$ is continuously differentiable, we have \begin{align*}
		|\E[\langle X_n-f_n(\hat{t}_n), h(f_n(\hat t_n))\rangle]|&\leq \sqrt{c\e_n}\|h\|_\infty+\lambda_n L \left[\int_{0}^{1}\|\nabla h(f_n(t)), f_n '(t)\|^2dt\right]^{1/2}\\&\leq\sqrt{c\e_n}\|h\|_\infty+\lambda_nL
		\sup_{t\in[0,1]}\|\nabla h(f_n(t))\|
		\left[\int_{0}^{1}\|f_n'(t)\|^2dt\right]^{1/2}\\&\leq
		\sqrt{c\e_n}\|h\|_\infty+\lambda_nL^2
		\sup_{t\in[0,1]}\|\nabla h(f_n(t))\|.
	\end{align*}Recall that $(X_n,\hat{t}_n)$ is assumed to converge to $(X,\hat{t})$. Since $\e_n\to 0$ and $(f_n)_{n\geq 1}$ is uniformly bounded, we see that $\lambda=0$ would imply that  $$\E[\langle X-f(\hat{t}), h(f(\hat{t}))\rangle]
=0,$$ so that $\E[X-f(\hat{t})|f(\hat{t})]=0$ a.s. by density of continuously differentiable functions since $h$ is an arbitrary such function. This contradicts Lemma \ref{lem:noneq}.\end{proof}

	\begin{proof}[Proof of Lemma \ref{lem:reg}]
		
For an $\R^d$-valued signed measure $m=(m^1, \dots,m^d)$ on $[0,1]$, we set
\begin{equation} \label{normnu}
\| m \| = \Bigl( \sum_{j=1}^d \| m^j \|_{TV}^2 \Bigr)^{1/2}
\end{equation}
where $\| m^j \|_{TV}$ denotes the total variation norm of $m^j$.
Recall that $$
f ''_n=(n-1)\left[\sum_{i=2}^{n-1}(v^n_{i+1}-v^n_i-(v^n_i-v^n_
{i-1}))\delta_{t_i^n}+(v^n_2-v^n_1)\delta_0-(v^n_n-v^n_{n-1})\delta_1\right].
$$
Thanks to the Euler-Lagrange system of equations obtained in Lemma \ref{lem:eqn},	we may write
	\begin{align*}
		\lambda_n\times\|f''_n\|&\leq\lambda_n\sum_{i=1}^n\|f''_n(\{t^n_i\})\|\\&\leq 
		\sum_{i=1}^{n}\left\|\E\left[(X_n-\hat{X}_n)\I_{\{\hat{X}_n=v^n_i+\xi_i^n\}}\right]\right\|+\e_n \sum_{i=1}^{n}\|v_i ^n -f(t_i^n)\|\\&\leq \E[\|X_n-\hat{X}_n\|]+\e_n\sqrt{n}\left(\sum_{i=1}^{n}\|v^n_i-f(t_i^n)\|^2\right)^{1/2}\\
		&\le F_n(v^n_1,\dots,v^n_n)^{1/2} +\e_n\sqrt{n}\left(\sum_{i=1}^{n}\|v^n_i-f(t_i^n)\|^2\right)^{1/2}.
	\end{align*}
	Consequently,  using Lemma \ref{lem:F-fini} and  Lemma \ref{lem:pen},  $\e_n\to 0$ and  $\lim_{n\to\infty} \lambda_{n}=\lambda\in(0,\infty]$, we obtain that  $\sup_{n\geq 1}\|f''_n\|<\infty$, that is, the sequence of signed measures  $(f''_n)_{n\geq 1}$ is uniformly bounded in total variation norm.
	Hence, it is relatively compact for the topology induced by the bounded Lipschitz norm defined for every signed measure $m$ by $$\|m\|_{\mathrm{BL}}=\sup\left\{\Big\|\int g(x)m(dx)\Big\|, \|g\|_\infty\leq 1, \sup_{t\neq x}\frac{|g(x)-g(t)|}{|x-t|}\leq 1\right\} .$$

	Let us show that the sequence $(f''_{n})_{n\geq 1}$ converges weakly to some signed measure. Let $\nu$ be a limit point of $(f''_{n})_{n\geq 1}$.
There exists an increasing function $\sigma:\N\to\N$, such that, for every $(s,t)$ such that $\nu(\{s\})=\nu(\{t\})=0$, 
	\begin{align}
	&f''_{\sigma(n)}((s,t])\to \nu((s,t]),\label{eq:cvnu1}\\
		&f''_{\sigma(n)}([0,t])\to \nu([0,t]),\quad f''_{\sigma(n)}([0,t))\to \nu([0,t)).\label{eq:cvnu2}
	\end{align}
	Since, for $0\leq s \leq t<1$, $f''_n((s,t])=f '_{n,r}(t)-f '_{n,r}(s)$, we have, for  $0\leq t<1$, $$f_n(t)=f_n(0)+tf '_{n,r}(0)+ \int_{0}^{t}f ''_n((0,u])du.$$
	Note that $f'_{n,r}(0)=f''_n(\{0\})$, so that the fact that $\sup_{n\geq 1}\|f''_n\|<\infty$ implies in particular that $(f'_{n,r}(0))_{n\geq 1}$ is bounded.
	Thus, up to an extraction, by \eqref{eq:cvnu1}, all terms converge: there exists a vector $v\in\R^d$, such that, for $0 \leq t<1$, $$f(t)=f(0)+tv+ \int_{0}^{t}\nu((0,u])du.$$ 
	Consequently, $v=f'_{r}(0)$, and, for $0\leq s \leq t<1$, $$\nu((s,t])=f '_{r}(t)-f '_{r}(s).$$ In other words, the signed measure $\nu$ is the second derivative of $f$ in the distribution sense, called hereafter $f''$, and  $(f''_{n})_{n\geq 1}$ converges weakly to $f''$.

	Observe, on the definition \eqref{eq:fn''}, that $f''_n([0,1])=0$, so that $f''([0,1])=0$.
	
	We have, for $t$ such that $f''(\{t\})=0$, $$f''_n([0,t])\to f''([0,t]),\quad f''_n([0,t))\to f''([0,t)).$$
Hence, since, for $t\in[0,1)$, $f''_n([0,t])=f '_{n,r}(t)$, and $t\mapsto f ''([0,t])$ is right-continuous, $f_r '(t)=f ''([0,t])$ for $t\in [0,1)$. Similarly, for $t\in (0,1]$, $f''_n([0,t))=f '_{n,\ell}(t)$, and $t\mapsto f ''([0,t))$  is left-continuous, so that $f_\ell '(t)=f ''([0,t))$ for $t\in (0,1]$.

	Recall that $f$ is  $L$-Lipschitz.
	Moreover, according to Corollary \ref{cor:L(f)},  $\L(f)= L$ since $G(L)>0$. Thus, 
	we have $\|f '_r(t)\|=L$ $dt-$a.e., and, since $f '_r$ is right-continuous, this implies that $\|f '_r(t)\|=L$ for all $t\in [0,1)$. Similarly, we obtain that  $\|f '_\ell(t)\|=L$ for all $t\in (0,1]$.
\end{proof}

\begin{proof}[Proof of Lemma \ref{lem:lafini}]
Observe that $f''\neq 0$. Indeed, we have, for example, $f''(\{0\})=f'_r(0)$, with $\|f'_r(0)\|=L>0 $. Yet,   $\lambda=\infty$ would imply 	$f''=0$ since $\sup_{n\geq 1}\left(\lambda_n\times \|f''_n\|\right)<\infty$.
\end{proof}

\begin{proof}[Proof of Lemma \ref{lem:equation}]

	Clearly, it suffices to consider the case where the test function $g$ is continuous.
	Using  equation \eqref{eq:suml} and the upper bounds \eqref{eq:Oetan} and \eqref{eq:Oepsn} in the proof of Lemma \ref{lem:lambda0}, we obtain, for $n\geq 2$,
	$$\left|\E[\langle X_n-f_n(\hat{t}_n), g(\hat{t}_n)\rangle]-\lambda_n(n-1)\sum_{i=1}^{n-1}\langle v^n_{i+1}-v^n_i,g(t_{i+1}^n)-g(t_i^n)\rangle \right|\leq (\eta_n+c\sqrt{\e_n})\|g\|_\infty,$$
	and besides
	$$ \lambda_n(n-1)\sum_{i=1}^{n-1}\langle v^n_{i+1}-v^n_i,g(t_{i+1}^n)-g(t_i^n)\rangle=-\lambda_n\int_{[0,1]}\langle g(t), f ''_n(dt) \rangle.$$
	 Thus,  passing to the limit, we see that $f$ satisfies  equation \eqref{eq:formuleth}.
	 
	 Finally, the uniqueness of $\lambda$ follows from the uniqueness of $\hat X$ (Proposition \ref{prop:hatXneg}), and the fact that
	\begin{equation*}
	\E[\langle X-\hat{X}, \hat{X}\rangle]=\lambda\int_{0}^{1}\|f_r'(s)\|^2ds= \lambda L^2
	\end{equation*}
	obtained thanks to equation \eqref{eq:formuleIP} in Remark \ref{rem:formuleIP}.
	
		\end{proof}

\section{An application: injectivity of $f$}\label{section:inj}

In this section, we present an application of the formula \eqref{eq:formuleth} of Theorem \ref{theo:main}.
We will use this first order condition to show in dimension $d=2$ that an open optimal curve is injective, and a closed optimal curve restricted to $[0,1)$ is injective, except in the case where its image is a segment. To obtain the result, we  follow arguments exposed in \cite{LuSle} in the frame of the penalized problem, for open curves. The main difference is the fact that we have at hand the Euler-Lagrange equation, which allows to simplify the proof. 

Again, we consider $L>0$ such that $G(L)>0$ and a curve  
$f\in\mathcal C_L$ such that $\Delta(f)=G(L)$, which is $L$-Lipschitz. We let $\hat{t}$ be defined as in  Theorem \ref{theo:main}. The random  vector $f(\hat{t})$ will sometimes be denoted by $\hat{X}$. Recall that $\|X-\hat{X}\|=d(X, \mbox{Im}f)$ a.s. by Theorem \ref{theo:main}.

To prove the injectivity of $f$, we will need several preliminary lemmas. Let us point out that Lemma \ref{lem:turn} to Lemma \ref{lem:tangent} below are valid for every $d\geq 1$.

First of all, we state the next  lemma, which will be useful in the sequel, providing a lower bound on the  curvature of any closed arc of $f$. Recall that the  total variation of a  signed measure $\nu$ is defined by
\begin{equation*}
\| \nu \| = \Bigl( \sum_{j=1}^d \| \nu^j \|_{TV}^2 \Bigr)^{1/2},
\end{equation*}
where $\| \nu^j \|_{TV}$ denotes the total variation norm of $\nu^j$. For a Borel set $A\subset [0,1]$, $f''_A$ denotes the vector-valued signed measure defined by $f''_A(B)=f''(A\cap B)$ for all Borel set $B\subset[0,1]$.

\begin{lem}\label{lem:turn}
	If $0\leq a<b\leq 1$ and $f(a)=f(b)$, then $\|f''_{(a,b]}\| \geq L.$
\end{lem}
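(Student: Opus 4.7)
The plan is to argue by contradiction: I would assume $\|f''_{(a,b]}\| < L$ and derive a contradiction from the closure condition $f(a)=f(b)$ combined with the fact that $\|f'_r\|\equiv L$ on $[0,1)$.

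First, I would establish the pointwise bound $\|f'_r(t)-f'_r(a)\| < L$ for every $t\in(a,b)$ with $t<1$. This rests on the elementary estimate that for any Borel set $E\subseteq(a,b]$,
\begin{equation*}
\|f''(E)\|^2 \;=\; \sum_{j=1}^d \bigl|(f'')^j(E)\bigr|^2 \;\leq\; \sum_{j=1}^d \bigl\|(f'')^j_{(a,b]}\bigr\|_{TV}^2 \;=\; \|f''_{(a,b]}\|^2,
\end{equation*}
because the total variation norm of a signed measure dominates its signed measure on any subset. Specializing to $E=(a,t]$ and invoking the identity $f''((a,t])=f'_r(t)-f'_r(a)$ from Theorem \ref{theo:main} then yields the pointwise claim under the contradiction hypothesis.

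Next, I would extract a scalar witness by setting $v := f'_r(a)/L$ (a unit vector, since $\|f'_r(a)\|=L$ and $a<1$) and $\phi(t) := \langle v, f'_r(t)\rangle$. Cauchy--Schwarz applied to $L-\phi(t) = \langle v, f'_r(a)-f'_r(t)\rangle$ gives $\phi(t)>0$ for every $t$ in the range covered by the previous step.

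Finally, since $f$ is $L$-Lipschitz hence absolutely continuous, the equality $f(a)=f(b)$ gives $\int_a^b f'_r(t)\,dt = 0$, and taking the inner product with $v$ yields $\int_a^b \phi(t)\,dt = 0$. However, the set of $t\in(a,b)$ with $t<1$ has full Lebesgue measure in $(a,b)$, so $\phi$ is strictly positive almost everywhere there, forcing the integral to be strictly positive --- the desired contradiction. The most delicate aspect is the boundary case $b=1$, where $f'_r(1)$ is undefined and Theorem \ref{theo:main}'s formula for $f''((a,t])$ only applies for $t<1$; this is handled by noting that $\{1\}$ is Lebesgue-null, so the pointwise positivity of $\phi$ on $(a,1)$ suffices for the integral argument.
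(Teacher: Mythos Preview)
Your argument is correct. Both your proof and the paper's use the same ingredients---the identity $\int_a^b f'_r(t)\,dt=0$, the fact that $\|f'_r(a)\|=L$, and the relation $f''((a,t])=f'_r(t)-f'_r(a)$---but they combine them differently. The paper proceeds directly: an integration by parts (Fubini) yields
\[
0=f(b)-f(a)=(b-a)f'_r(a)+\int_{(a,b]}(b-s)\,f''(ds),
\]
so $-f'_r(a)=\int_{(a,b]}\tfrac{b-s}{b-a}\,f''(ds)$ with a weight bounded by $1$, whence $L=\|f'_r(a)\|\le\|f''_{(a,b]}\|$ immediately. Your route instead bounds $\|f'_r(t)-f'_r(a)\|$ pointwise and then argues by contradiction via the scalar projection $\phi$. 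The paper's approach is shorter and avoids the contradiction structure and the separate handling of $b=1$; your approach has the modest advantage of giving geometric information (the tangent direction never turns past a right angle from $f'_r(a)$ under the contradiction hypothesis), though that is not needed here.
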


\begin{proof}[Proof of Lemma \ref{lem:turn}]
	Let us write \begin{multline*}
	0=f(b)-f(a)=\int_{a}^{b}f'_r(t)dt\\=\int_{a}^{b}\left[f_r'(0)+\int_{(0,t]} f''(ds)\right]dt=(b-a)f_r'(0)+\int_{(0,b]}(b-(s\lor a)) f''(ds)\\=(b-a)f_r'(0)+(b-a)f''((0,a])+\int_{(a,b]}(b-s)f''(ds)
	\\=(b-a)f'_r(a)+\int_{(a,b]}(b-s)f''(ds).
	\end{multline*}
	Thus, $\int_{(a,b]}\frac{b-s}{b-a}f''(ds)=-f'_r(a),$ which implies $\|f''_{(a,b]}\| \geq \|f'_r(a)\|=L.$
\end{proof}

 As a first step toward injectivity, we now show that, if a point is multiple, it is only visited finitely many times.  

\begin{lem}\label{lem:doublefini}For every $t\in [0,1],$ the set $f^{-1}(\{f(t)\})$ is finite.
	\end{lem}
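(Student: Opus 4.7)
The plan is to argue by contradiction, using Lemma \ref{lem:turn} together with the finite total variation of $f''$ established in Theorem \ref{theo:main}. Suppose there exists $t_0\in[0,1]$ such that $f^{-1}(\{f(t_0)\})$ is infinite. Since $[0,1]$ is compact, any infinite set of distinct reals contains a strictly monotone subsequence, so we can extract distinct points $s_1<s_2<s_3<\dots$ (after possibly relabeling, as the decreasing case is symmetric) all lying in $f^{-1}(\{f(t_0)\})$.

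Next, I would apply Lemma \ref{lem:turn} to each consecutive pair: since $f(s_i)=f(s_{i+1})=f(t_0)$, we obtain
\[
\|f''_{(s_i,s_{i+1}]}\|\geq L\quad\text{for every }i\geq 1.
\]
The intervals $(s_i,s_{i+1}]$ are pairwise disjoint and all contained in $[0,1]$.

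The final step is to deduce that the total variation of $f''$ must then be infinite, contradicting the bound $\|f''\|<\infty$ obtained in the proof of Theorem \ref{theo:main}. With the norm \eqref{normnu}, write $A_i=(s_i,s_{i+1}]$. For each component $j$, the total variation measure $|{f''}^{j}|$ is a positive measure, hence additive and in particular $|{f''}^{j}|([0,1])\geq\sum_i |{f''}^{j}|(A_i)$. Using the elementary inequality $(\sum_i a_i)^2\geq \sum_i a_i^2$ for nonnegative reals, this gives
\[
\|f''\|^2 \;=\; \sum_{j=1}^d |{f''}^j|([0,1])^2 \;\geq\; \sum_{j=1}^d\sum_{i\geq 1} |{f''}^j|(A_i)^2 \;=\; \sum_{i\geq 1}\|f''_{A_i}\|^2 \;\geq\; \sum_{i\geq 1} L^2 \;=\; +\infty,
\]
which is the desired contradiction.

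The argument is essentially mechanical once Lemma \ref{lem:turn} is in hand; the only point requiring a moment of care is the passage from ``each piece has norm at least $L$'' to ``the whole has infinite norm'' with the particular vector-valued norm \eqref{normnu} used in the paper. Since this norm controls (and is controlled by) the sum of component total variations, and each component total variation is genuinely additive over disjoint Borel sets, no real obstacle arises there. I do not expect measurability or extraction issues since we work with countably many points and Borel intervals.
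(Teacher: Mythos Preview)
Your proof is correct and follows essentially the same approach as the paper: assume the preimage is infinite, extract ordered points, apply Lemma~\ref{lem:turn} to consecutive pairs, and conclude that $\|f''\|$ is unbounded. Your treatment of the vector-valued norm \eqref{normnu} via the inequality $\bigl(\sum_i a_i\bigr)^2\ge\sum_i a_i^2$ is in fact more careful than the paper's, which asserts $\|f''\|\ge\sum_i\|f''_{(t_{i-1},t_i]}\|$ directly.
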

	\begin{proof}
		Let $t\in[0,1]$. Suppose that $f^{-1}(\{f(t)\})$ is infinite. Then, for all $k\geq 1$, there exist $t_0,t_1,\dots, t_k\in f^{-1}(\{f(t)\})$  such that $0\leq t_0<t_1<\dots <t_k\leq 1$. So, by Lemma \ref{lem:turn}, $\|f''\|\geq \sum_{i=1}^{k}\|f''_{(t_{i-1},t_i]}\|\geq kL$, which contradicts the fact that $f$ has finite curvature.
	\end{proof}

In the case $\mathcal C_L=\{\phi:[0,1]\to \R^d,\L(\phi)\leq L\}$, the endpoints of the curve $f$ cannot be multiple points.

\begin{lem}\label{lem:extpasdb}Let $\mathcal C_L=\{\phi:[0,1]\to \R^d,\L(\phi)\leq L\}$. We have $f^{-1}(\{f(0)\})= \{0\}$ and  $f^{-1}(\{f(1)\})= \{1\}$.
	\end{lem}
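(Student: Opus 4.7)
Suppose for contradiction that there exists $a\in(0,1]$ with $f(a)=f(0)$; by Lemma \ref{lem:doublefini} we may choose $a$ minimal. Since $\|f'_r(0)\|=L>0$ by Theorem \ref{theo:main}, the curve is not constant on $[0,a]$, so $\ell:=\L(f|_{[0,a]})>0$. The symmetric statement about $f(1)$ follows by reparameterizing $t\mapsto 1-t$, so it suffices to treat $f^{-1}(\{f(0)\})$.

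The key input is formula \eqref{eq:formuleth} applied with the bounded Borel test function $g(t)=v\,\mathbf{1}_{\{0\}}(t)$. Using $f''(\{0\})=f'_r(0)$, this gives
$$\E\bigl[\langle X-f(0),v\rangle\,\mathbf{1}_{\{\hat t=0\}}\bigr]=-\lambda\langle v,f'_r(0)\rangle$$
for every $v\in\R^d$. Choosing the unit vector $v:=-f'_r(0)/L$, the right-hand side becomes $\lambda L>0$; hence the event $A:=\{\hat t=0\}\cap\{\langle X-f(0),v\rangle>0\}$ has positive probability, and on $A$ we have $\hat X=f(0)$.

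I then construct a competitor $\tilde f\in\mathcal C_L$ with $\Delta(\tilde f)<\Delta(f)$, contradicting $\Delta(f)=G(L)$. Let $S:=\{f(0)+sv:s\in[0,\ell]\}$ be the segment of length $\ell$ emanating from $f(0)$ in direction $v$, and let $\tilde f:[0,1]\to\R^d$ be any $L$-Lipschitz parameterization of the concatenation $S\cup f([a,1])$, starting at $f(0)+\ell v$, passing through $f(0)=f(a)$, and ending at $f(1)$. Then $\L(\tilde f)=\ell+(L-\ell)=L$, so $\tilde f\in\mathcal C_L$. To compare the two criteria, I split $\E\bigl[d(X,\operatorname{range}(\tilde f))^2-d(X,f([0,1]))^2\bigr]$ according to the location of the original projection $\hat X$: on $\{\hat X\in f([a,1])\}$ the contribution is nonpositive since $\hat X\in\operatorname{range}(\tilde f)$; on $\{\hat X=f(0)\}$ it is again nonpositive (because $f(0)\in S$) and strictly negative on $A$, using that projection onto $S$ gives a strictly closer point; on $\{\hat X\in f((0,a))\}$ the loss per $X$ is bounded by $\ell\|X-\hat X\|+\ell^2/4$, using that $f([0,a])\subset\bar B(f(0),\ell/2)$ and that $f(0)\in\operatorname{range}(\tilde f)$.

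The main obstacle is to argue that the strict gain on $A$ dominates the potential loss on $\{\hat X\in f((0,a))\}$. I expect this to be handled by applying formula \eqref{eq:formuleth} with $g(t)=(f(t)-f(0))\mathbf{1}_{(0,a)}(t)$, which via the integration-by-parts identity \eqref{eq:formuleIP} delivers a sharp identity relating the loss on $\{\hat X\in f((0,a))\}$ to $\lambda$; alternatively, by considering a one-parameter family $\tilde f_\eta$ that replaces only a portion of length $\eta$ of the loop by a segment of the same length in direction $v$, then letting $\eta\to 0$ to extract a strict first-order improvement from the positivity $\lambda L>0$. Since $f$ is critical, the leading terms from loop-shrinking and segment-addition cancel at first order, so one has to either choose the perturbation carefully to break this cancellation, or use the finite construction above and estimate both sides globally.
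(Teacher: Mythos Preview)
Your proof is incomplete, and you essentially say so yourself in the final paragraph. The competitor you build---replacing the entire loop $f([0,a])$ by a segment of the same length---does not come with a small parameter, so there is no mechanism to force the gain on $A$ to dominate the loss on $\{\hat X\in f((0,a))\}$. Your own estimates give a loss of order $\ell\,\E[\|X-\hat X\|\I_{\{\hat t\in(0,a)\}}]$ and a gain of order $\E[\langle X-f(0),v\rangle^2\wedge\ell^2\,\I_A]$, and nothing in the hypotheses prevents the former from being larger. The two ``expected'' fixes you sketch are not carried out; the first (test function $g(t)=(f(t)-f(0))\I_{(0,a)}(t)$) does not obviously produce the needed inequality, and for the second you already note that first-order terms may cancel.

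The paper's argument is quite different and avoids this balancing problem entirely. Instead of replacing the loop by a segment, it exploits the redundancy $f(0)=f(t_0)$ to \emph{delete} the initial arc $f([0,\e])$ (for small $\e<t_0$) and then \emph{rescale} the remaining curve $f([\e,1])$ by $1/(1-\e)$ to restore length $L$. Concretely, one sets $\hat X_\e=f(\hat t\vee\e)\I_{\{\hat t>0\}}+f(0)\I_{\{\hat t=0\}}$, which lies in $f([\e,1])$ since $f(0)=f(t_0)$, and considers $\hat X_\e/(1-\e)$. Expanding $\|X-\hat X_\e/(1-\e)\|^2$ yields three contributions: a term $O(\e^2)$ from $\|\hat X-\hat X_\e-\e\hat X\|^2$; a term $\E[\langle X-\hat X,\hat X-\hat X_\e\rangle]$, which is $o(\e)$ by applying \eqref{eq:formuleth} with $g(t)=(f(t)-f(\e))\I_{(0,\e]}(t)$; and the decisive term $-\tfrac{2\e}{1-\e}\E[\langle X-\hat X,\hat X\rangle]=-\tfrac{2\e}{1-\e}\lambda L^2$, obtained from \eqref{eq:formuleIP} with $g=f$. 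This last term is strictly negative of order $\e$, so for small $\e$ one gets $\E[\|X-\hat X_\e/(1-\e)\|^2]<\E[\|X-\hat X\|^2]$, contradicting optimality. The point you were missing is that the rescaling step is what produces a clean first-order gain $-2\e\lambda L^2$; deletion alone (your approach) only exploits the atom at $0$, which does not give a controllable comparison.
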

	
	\begin{proof}Observe that, by symmetry, we only need to prove the first statement since  the second one follows then by considering the curve $t\mapsto f(1-t)$.
		Assume that the set $f^{-1}(\{f(0)\})$ has cardinality at least 2. Thanks to Lemma \ref{lem:doublefini}, we may consider $t_0=\min\{t>0: f(t)=f(0)\}$. For $x\in \mbox{Im} f,$ we set $ \hat{t}(x)=\inf\{t\in[0,1], f(t)=x\}$. For every $\e\in(0,t_0)$, we let $$\hat{X}_\e=f\big(\hat{t}\lor\e\big)\I_{\{\hat{t}>0\}}+f(0)\I_{\{\hat{t}=0\}} .$$
		With this definition, the random  vector $\hat{X}_\e$ takes its values in $f([\e,1])\cup \{f(0)\}$, that is in $f([\e,1])$ since $f(t_0)=f(0)$ and $\e<t_0$. Thus, $\frac{\hat{X}_\e}{1-\e}$ takes its values in $\frac{f([\e,1])}{1-\e}$, which is the image of  a curve with length at most $L$. Consequently, by optimality of $f$, we have $$\E\left[\bigg\|X-\frac{\hat{X}_\e}{1-\e}\bigg\|^2\right]\geq \E[\|X-\hat{X}\|^2].$$
		Besides, we may write 
		\begin{align*}
		\bigg\|X-\frac{\hat{X}_\e}{1-\e}\bigg\|^2&=\bigg\|X-\hat{X}+\hat{X}-\frac{\hat{X}_\e}{1-\e}\bigg\|^2\\&=\|X-\hat{X}\|^2+\bigg\|\hat{X}-\frac{\hat{X}_\e}{1-\e}\bigg\|^2+2\bigg\langle X-\hat{X}, \hat{X}-\frac{\hat{X}_\e}{1-\e}\bigg\rangle
		\\&=\|X-\hat{X}\|^2+\frac{1}{(1-\e)^2}\|\hat{X}-\hat{X}_\e-\e\hat{X}\|^2+\frac{2}{1-\e}\left(\langle X-\hat{X}, \hat{X}-\hat{X}_\e\rangle-\e\langle X-\hat{X},\hat{X}\rangle\right).
		\end{align*}
		As $\|\hat{X}-\hat{X}_\e\|\leq L\e$ since $f$ is $L$-Lipschitz, we get 
		\begin{align*}
		\E[\|\hat{X}-\hat{X}_\e-\e\hat{X}\|^2]&\leq 2L^2\e^2+2\e^2\E[\|\hat{X}\|^2]=2(L^2+ \E[\|\hat{X}\|^2])\e^2.
		\end{align*}
		 Note that $\E[\|\hat{X}\|^2]<\infty$
	by the same argument as in $\eqref{eq:hatXfini}$.
	Moreover,  thanks to equation \eqref{eq:formuleIP} in Remark \ref{rem:formuleIP}, we have
	\begin{equation}\label{eq:formulehatX}
	\E[\langle X-\hat{X}, \hat{X}\rangle]=\lambda\int_{0}^{1}\|f_r'(s)\|^2ds= \lambda L^2.
	\end{equation}
	Furthermore, $\hat{X}-\hat{X}_\e=(f(\hat{t})-f(\e))\I_{\{0<\hat{t}\leq \e\}}$, so that equation \eqref{eq:formuleth} implies
	$$\E[\langle X-\hat{X},\hat{X}-\hat{X}_\e\rangle]=-\lambda \int_{[0,1]} \langle (f(t)-f(\e))\I_{\{0<t\leq \e\}} ,f''(dt)\rangle.$$
	Hence, \begin{align*}
	|\E[\langle X-\hat{X},\hat{X}-\hat{X}_\e\rangle]|&
	\le \lambda\sum_{j=1}^d \int_{(0,\e]} | f^j(t)-f^j(\e)|\, |(f'')^j|(dt)\\
	&\le \lambda L\e \sum_{j=1}^d |(f'')^j| ((0,\e]),
		\end{align*}
		where $| (f'')^j|$ stands for the total variation of the signed measure $(f'')^j$. 
		Finally, we obtain 
		$$ \E\left[\bigg\|X-\frac{\hat{X}_\e}{1-\e}\bigg\|^2\right]\leq \E\left[\|X-\hat{X}\|^2\right]+2(L^2+\E[\|\hat{X}\|^2])\e^2+\lambda L\e\rho(\e) -\frac{2\e}{1-\e}\lambda L^2,$$ where $\rho(\e)$ tends to 0 as $\e\to 0$. This inequality shows that, for $\e$ small enough, $\E\left[\big\|X-\frac{\hat{X}_\e}{1-\e}\big\|^2\right]<\E[\|X-\hat{X}\|^2]$,  which contradicts the optimality of $f$.
			\end{proof}

		For an open curve,
		there exists a multiple point which is the last multiple point.
		\begin{lem}\label{lem:lastdouble}
			Let $\mathcal C_L=\{\phi:[0,1]\to \R^d,\L(\phi)\leq L\}$. There exists $\delta>0$ such that for every $t\in[1-\delta,1],$ $f^{-1}(\{f(t)\})=\{t\}.$
		\end{lem}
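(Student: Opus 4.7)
The plan is to argue by contradiction, exploiting the fact that each occurrence of a double point costs a definite amount of total variation to $f''$ (Lemma \ref{lem:turn}), together with the finiteness of that total variation (Theorem \ref{theo:main}) and the fact that the endpoint $t=1$ itself is simple (Lemma \ref{lem:extpasdb}).

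Suppose the conclusion fails. Then for every $n\geq 1$ there exists $t_n\in[1-1/n,1]$ such that $f^{-1}(\{f(t_n)\})$ contains a point $s_n\neq t_n$. By construction $t_n\to 1$. Passing to a subsequence, we may assume $s_n\to s_\infty\in[0,1]$. By continuity, $f(s_\infty)=\lim_n f(s_n)=\lim_n f(t_n)=f(1)$. Applying Lemma \ref{lem:extpasdb}, we obtain $s_\infty=1$, so that $s_n\to 1$ as well. Moreover, since $f^{-1}(\{f(1)\})=\{1\}$ by that same lemma, we cannot have $t_n=1$, so $t_n<1$; and similarly $s_n<1$ eventually. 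Set $a_n=\min(s_n,t_n)$ and $b_n=\max(s_n,t_n)$, so that $a_n<b_n<1$, both tending to $1$, with $f(a_n)=f(b_n)$.

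The key step is then to invoke Lemma \ref{lem:turn} for each pair $(a_n,b_n)$, which yields $\|f''_{(a_n,b_n]}\|\geq L$. Fix an arbitrary $\e>0$. Since $a_n\to 1$ and $b_n<1$, the interval $(a_n,b_n]$ is eventually contained in $(1-\e,1)$. Using the monotonicity of each total variation $|(f'')^j|$ with respect to set inclusion, and the definition \eqref{normnu}, we deduce
\begin{equation*}
\|f''_{(1-\e,1)}\|\geq \|f''_{(a_n,b_n]}\|\geq L
\end{equation*}
for all $n$ large enough, hence for every $\e>0$.

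To reach the contradiction, I would now let $\e\downarrow 0$. Since $f$ has bounded curvature by Theorem \ref{theo:main}, each $|(f'')^j|$ is a finite positive measure, and the decreasing family of sets $(1-\e,1)$ satisfies $\bigcap_{\e>0}(1-\e,1)=\emptyset$. Continuity from above of finite measures gives $|(f'')^j|((1-\e,1))\to 0$ for each $j=1,\dots,d$, so that $\|f''_{(1-\e,1)}\|\to 0$, contradicting the lower bound $L>0$. The main delicate point in the argument is not computational but conceptual: one must ensure that the accumulation of double points truly lies in the open interval $(1-\e,1)$ rather than touching $\{1\}$ (where $f''$ carries a Dirac mass $-f'_\ell(1)$ of norm $L$), and this is precisely what Lemma \ref{lem:extpasdb} guarantees by ruling out $t_n=1$.
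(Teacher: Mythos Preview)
Your argument is correct and follows the same overall route as the paper: assume the conclusion fails, build sequences $t_n\to 1$ and $s_n\neq t_n$ with $f(s_n)=f(t_n)$, use Lemma \ref{lem:extpasdb} to force $s_n\to 1$ as well, and then combine Lemma \ref{lem:turn} with the finiteness of $\|f''\|$ to reach a contradiction. The only difference is in the last step: the paper extracts a further subsequence so that the intervals $[s_k\wedge t_k,\,s_k\vee t_k]$ are pairwise disjoint and sums the contributions $\|f''_{(s_k\wedge t_k,\,s_k\vee t_k]}\|\geq L$ to obtain $\|f''\|=\infty$, whereas you use continuity from above of the finite measures $|(f'')^j|$ on the shrinking sets $(1-\e,1)$. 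Both endings are equally valid; yours avoids the extra extraction, while the paper's makes the ``infinitely many disjoint loops'' picture slightly more explicit.
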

		\begin{proof}
Otherwise, we can build sequences $(t_k)_{k\geq 1}$ and $(s_k)_{k\geq 1}$ such that $t_k\to 1$ and $f(t_k)=f(s_k)$, with $s_k\neq t_k$ for all $k\geq 1.$ Up to extraction of a subsequence, we may assume that	$(s_k)$	 converges to a limit $s\in[0,1]$. Hence, we have $f(s)=f(1)$, which implies $s=1$ by Lemma \ref{lem:extpasdb}. Up to another extraction, we may consider that the intervals
$[s_k\land t_k,s_k\lor t_k]$, $k\geq1$, are mutually disjoint. 
Finally, using Lemma \ref{lem:turn}, we obtain $$\|f''\|
\geq\sum_{k\geq 1}\|f''_{(s_k\land t_k,s_k\lor t_k]}\|=\infty,$$ which yields a contradiction since we have shown that an optimal curve has finite curvature.

\end{proof}

Now, we show that the two branches of the curve are necessarily tangent at a multiple point.
\begin{lem}\label{lem:tangent}
\begin{enumerate}
	\item [$(i)$] If there exist $0<t_0<t_1<1$ such that $f(t_0)=f(t_1)$, then $f'_\ell(t_0)=f'_r(t_0)=- f'_r(t_1)=-f'_\ell(t_1)$.
\item [$(ii)$] In the case $\mathcal C_L=\{\phi:[0,1]\to \R^d,\L(\phi)\leq L, \phi(0)=\phi(1)\}$, if there exists $0<t<1$ such that $f(t)=f(0)$, then $f'_\ell(t)=f'_r(t)=- f'_r(0)=-f'_\ell(1)$.
\end{enumerate}
\end{lem}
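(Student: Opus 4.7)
My plan is to derive both parts (i) and (ii) from one common subclaim by exploiting the \textbf{swap curve}
$$\tilde f(t)=f(t)\ \text{for } t\notin[t_0,t_1],\qquad \tilde f(t)=f(t_0+t_1-t)\ \text{for } t\in[t_0,t_1].$$
Since $f(t_0)=f(t_1)$, $\tilde f$ is continuous; it visibly has the same range as $f$ and the same length $L$, and a quick triangle-inequality check shows that it is again $L$-Lipschitz. Therefore $\Delta(\tilde f)=\Delta(f)=G(L)$, so Theorem \ref{theo:main} applies to $\tilde f$. Crucially, the identity $\E[\langle X-\hat X,\hat X\rangle]=\lambda L^2$ derived at the very end of the proof of Theorem~\ref{theo:main} depends on the range of the curve only (through $\hat X$), which is common to $f$ and $\tilde f$; hence $\tilde f$ comes with the same Lagrange multiplier $\lambda$. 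A direct computation at the gluing parameters gives $\tilde f'_\ell(t_0)=f'_\ell(t_0)$, $\tilde f'_r(t_0)=-f'_\ell(t_1)$, $\tilde f'_\ell(t_1)=-f'_r(t_0)$, $\tilde f'_r(t_1)=f'_r(t_1)$.

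The plan is then to prove the following \textbf{subclaim}: an optimal length-constrained curve has no angle at an interior multiple point; i.e., if $f(t_0)=f(t_1)$ with $0<t_0\neq t_1<1$, then $f'_\ell(t_0)=f'_r(t_0)$. Granting this, applying the subclaim to $f$ at $t_0$ and $t_1$ yields $f'_\ell(t_0)=f'_r(t_0)$ and $f'_\ell(t_1)=f'_r(t_1)$, while applying it to the optimal curve $\tilde f$ at the same two points yields $f'_\ell(t_0)=-f'_\ell(t_1)$ and $f'_r(t_0)=-f'_r(t_1)$. Combined, these give the full chain $f'_\ell(t_0)=f'_r(t_0)=-f'_r(t_1)=-f'_\ell(t_1)$, proving (i). For (ii), the closed case, the swap on the arc between the two preimages of $f(0)=f(t)$ is still $L$-Lipschitz and optimal, and applying the subclaim to $f$ and its swap at both $t$ and at $0\equiv 1$ gives the analogous conclusions; in fact things are slightly cleaner here because $f''$ carries no boundary atoms in the closed setting.

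To prove the subclaim I would argue by contradiction in the style of Lemma~\ref{lem:extpasdb}. Suppose $u_0^-:=f'_\ell(t_0)\neq f'_r(t_0)=:u_0^+$; by the first remark after Theorem~\ref{theo:main} we have $\P(\hat t=t_0)>0$ and $\E[(X-p)\I_{\hat t=t_0}]=\lambda(u_0^--u_0^+)\neq 0$, where $p=f(t_0)=f(t_1)$. The idea is to build a competitor that exploits the \emph{second} preimage $t_1$ of $p$: near $t_0$ one locally rounds the corner on scale $\varepsilon$ (replacing $f|_{[t_0-\varepsilon,t_0+\varepsilon]}$ by a well-chosen short detour), thereby freeing a length of order $\varepsilon(2L-\|u_0^++u_0^-\|)>0$, and reinvests this freed length near $t_1$ via a local perturbation of the form $\hat X_\varepsilon=(1-\varepsilon)\hat X+\varepsilon(\cdot)$ on a small event, exactly as in the proof of Lemma \ref{lem:extpasdb}. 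The key point is to show that the induced change in $\Delta$ has a first-order negative term of the form $-c\,\lambda L^2 \varepsilon$ produced by the identity \eqref{eq:formuleIP} (i.e., the formula $\E[\langle X-\hat X,\hat X\rangle]=\lambda L^2$), which dominates the second-order corrections coming from the rounding for $\varepsilon$ small, contradicting $\Delta(f)=G(L)$.

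The main obstacle, as in Lemma \ref{lem:extpasdb}, is bookkeeping: one must identify the competitor precisely enough that the linear-in-$\varepsilon$ gain produced by the formula strictly beats the second-order loss $O(\varepsilon^2)+O(\varepsilon\,\|\hat X-\hat X_\varepsilon\|_{\mathrm{TV}})$ arising from the local modification of the range and from the variation of $\hat X$. Once the correct competitor is in hand, the estimate proceeds along the same lines as Lemma~\ref{lem:extpasdb}—using the Lipschitz character of $f$ to control the discrepancy between $\hat X$ and $\hat X_\varepsilon$, the formula \eqref{eq:formuleth} applied to an appropriate test function localized near $t_0$ to control the cross term $\E[\langle X-\hat X,\hat X-\hat X_\varepsilon\rangle]$, and finally the identity \eqref{eq:formulehatX} to extract the dominant negative term.
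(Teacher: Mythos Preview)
Your overall architecture is the paper's: establish the subclaim ``no corner at an interior multiple point,'' then apply it to both $f$ and the swap curve $\tilde f$ (which is indeed $L$-Lipschitz since $\|\tilde f'\|=L$ a.e.) to obtain the full chain of equalities. The paper proves the subclaim by precisely the mechanism you name at the end: replace $f|_{[t_0-\varepsilon,t_0+\varepsilon]}$ by the chord (strictly shorter because of the angle), keep the atom $\{\hat t=t_0\}$ mapped to $f(t_0)=f(t_1)$ which is still in the range, and reinvest the freed length via a \emph{global} dilation $\hat X\mapsto(1+\gamma)\hat X$ with $\gamma\sim\varepsilon$, whose first-order effect on the criterion is $-2\gamma\lambda L^2$ by the identity $\E[\langle X-\hat X,\hat X\rangle]=\lambda L^2$. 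One small difference: for (ii) the paper cyclically reparametrizes the closed curve so that both preimages become interior, reducing (ii) to (i) rather than rerunning the argument at the boundary.

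The one point in your sketch that needs correcting is the description of the reinvestment as ``a local perturbation of the form $\hat X_\varepsilon=(1-\varepsilon)\hat X+\varepsilon(\cdot)$ on a small event, near $t_1$.'' A modification of $\hat X$ supported on a small event cannot produce the first-order gain $-c\lambda L^2\varepsilon$ you claim; that gain comes only from a \emph{global} dilation of $\hat X$ (in Lemma~\ref{lem:extpasdb} it is $\hat X_\varepsilon\mapsto\hat X_\varepsilon/(1-\varepsilon)$, here it is multiplication by $1+\gamma$). The role of the second preimage $t_1$ is not to host the reinvestment but merely to guarantee that the point $f(t_0)$ survives in the range after the corner is excised, so that the mass $\P(\hat t=t_0)$ incurs no cost under the chord replacement. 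With this correction your plan coincides with the paper's proof.
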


\begin{proof}
First, we show that point $(ii)$ follows from point $(i)$. Let $t\in(0,1)$ such that $f(t)=f(0)$.
Define the curve $g$ by $g(s)=f(s+t/2)$ for $s\in[0,1-t/2]$ and $g(s)=f(s+t/2-1)$ for $s\in[1-t/2,1]$.
Clearly, $g$ is a closed curve, $\Delta(g)=\Delta(f)$ and $g$ is $L$-Lipschitz. Moreover, one has: $g(t/2)=g(1-t/2)$, $g'_r(t/2)=f'_r(t)$, $g'_\ell(t/2)=f'_\ell(t)$, $g'_r(1-t/2)=f'_r(0)$ and $g'_\ell(1-t/2)=f'_\ell(1)$.
Consequently, if $(i)$ holds true for $g$, one deduces $(ii)$.

It remains to show point $(i)$. Suppose that $f'_\ell(t_0)\neq f'_r(t_0)$.
Let $\gamma\in(0,1]$ and $\e>0$. We introduce the random vectors $\hat{X}_{0,\gamma}=(1+\gamma)\hat{X}$ and $$\hat{X}_{\e,\gamma}=(1+\gamma)\left[\hat{X}\I_{\hat{t}\in[0,t_0-\e)\cup (t_0+\e, 1]\cup\{t_0\}}+h_\e(\hat{t})\I_{\hat{t}\in[t_0-\e,t_0+\e]\setminus \{t_0\}}\right],$$ 
where $h_\e(t)=\left(\frac{f(t_0+\e)-f(t_0-\e)}{2\e}(t-(t_0-\e))+f(t_0-\e)\right)$. 

Let us write \begin{align}
\E[\|X-\hat{X}_{0,\gamma}\|^2]&=\E[\|X-\hat{X}\|^2]+\E[\|\hat{X}-\hat{X}_{0,\gamma}\|^2]+2\E[\langle X-\hat{X}, \hat{X}-\hat{X}_{0,\gamma}\rangle]\nonumber\\&=
\E[\|X-\hat{X}\|^2]+\gamma^2\E[\|\hat{X}\|^2]-2\E[\langle X-\hat{X}, \gamma\hat{X}\rangle]
\nonumber\\&=
\E[\|X-\hat{X}\|^2]+\gamma^2\E[\|\hat{X}\|^2]-2\gamma\lambda L^2.\label{eq:XhatXg0}
\end{align}For the last equality, we used equation \eqref{eq:formulehatX}. \\
Note that $\hat{X}_{\e,\gamma}=\hat{X}_{0,\gamma}+(1+\gamma)(h_\e(\hat{t})-f(\hat{t}))\I_{\hat{t}\in[t_0-\e,t_0+\e]\setminus \{t_0\}}$ and that $\|h_\e(\hat{t})-f(\hat{t})\|\leq 4\e L$. So, we have
\begin{multline}
\E[\|X-\hat{X}_{\e,\gamma}\|^2]=\E[\|X-\hat{X}_{0,\gamma}\|^2]+(1+\gamma)^2\E[\|h_\e(\hat{t})-f(\hat{t})\|^2\I_{\hat{t}\in[t_0-\e,t_0+\e]\setminus \{t_0\}}]\\+2(1+\gamma) \E [\langle X-\hat{X}_{0,\gamma},(h_\e(\hat{t})-f(\hat{t}))\I_{\hat{t}\in[t_0-\e,t_0+\e]\setminus \{t_0\}}\rangle]\\ =\E[\|X-\hat{X}_{0,\gamma}\|^2]+\mathcal O(\e^2)+  o(\e).\label{eq:XhatXge}
\end{multline}
Indeed, $\P([t_0-\e,t_0+\e]\setminus \{t_0\})$ tends to 0 as $\e$ tends to 0.
Besides, the random vector $\hat{X}_{\e,\gamma}$ is taking its values in the image of a curve of length $$L_{\e,\gamma}:=(1+\gamma)(L(1-2\e)+\|f(t_0+\e)-f(t_0-\e)\|).$$
Yet, since $f'_\ell(t_0)\neq f'_r(t_0)$, if $\e$ is small enough, there exists $\alpha\in[0,1)$ such that \begin{align*}
\|f(t_0+\e)-f(t_0-\e)\|^2&=\|f(t_0+\e)-f(t_0)+f(t_0)-f(t_0-\e)\|^2\\
&=\e^2\bigg[\Big\|\frac{f(t_0+\e)-f(t_0)}{\e}\Big\|^2+\Big\|\frac{f(t_0)-f(t_0-\e)}{\e}\Big\|^2\bigg.\\&\bigg.\quad+2\Big\langle \frac{f(t_0+\e)-f(t_0)}{\e}, \frac{f(t_0)-f(t_0-\e)}{\e}\Big\rangle\bigg].\\
&\leq \e^2(2L^2+2L^2\alpha).
\end{align*}
Hence, $\|f(t_0+\e)-f(t_0-\e)\|<\e L\sqrt{2(1+\alpha)}$, and, thus,
 $$L_{\e,\gamma}\leq (1+\gamma)(L-2\e L+\e L\sqrt{2(1+\alpha)})=(1+\gamma)(L-\eta\e),$$where $\eta>0.$
Let $\gamma=\frac{\eta\e}{L}$. Then, for $\e$ small enough, we get $L_{\e,\gamma}\leq L-\frac{(\eta\e)^2}{L}<L$ and, using equations \eqref{eq:XhatXg0} and \eqref{eq:XhatXge}, we have  $\E[\|X-\hat{X}_{\e,\gamma}\|^2]<\E[\|X-\hat{X}\|^2].$ This  contradicts the optimality of $f$.
So, $f'_\ell(t_0)= f'_r(t_0)$. Similarly, we obtain that $f'_\ell(t_1)= f'_r(t_1)$.
Finally, consider the curve $g$, defined by $$g(t)=\begin{cases}f(t)&\mbox{if }t\in[0,t_0]\cup [t_1,1]\\f(t_0+t_1-t)&\mbox{if }t\in(t_0,t_1).\end{cases}$$ This definition means that $g$ has the same image as $f$ but the arc between $t_0$ and $t_1$ is traveled along in the reverse direction. Since $g$, having the same image and length as $f$, is an optimal curve, which satisfies $g(t_0)=g(t_1)$, we have $g'_\ell(t_0)= g'_r(t_0)$ and $g'_\ell(t_1)= g'_r(t_1)$. On the other hand, by the definition of $g$, we know that $f'(t_0)=g'_\ell(t_0)=-g'_\ell(t_1)$ and $f'(t_1)=g'_r(t_1)=-g'_r(t_0)$. Hence, $f'(t_0)=-f'(t_1).$
\end{proof}

We introduce the set
\begin{equation*}
D= \Bigl\{ t\in[0,1) \mid \mbox{Card}\bigl( f^{-1}(\{f(t)\} )\cap [0,1) \bigr) \geq 2
\Bigr\}.
\end{equation*}

\begin{lem}\label{lem:Gronwall}
If  $f(t)$, $t\in(0,1)$, is a multiple point of $f:[0,1]\to \R^2$, then $t$ cannot be right- or left-isolated:\\
for all $t\in D\cap(0,1)$, for all $\e>0$, $(t,t+\e)\cap D \not=\emptyset$ and $(t-\e,t)\cap D\not=\emptyset$.
\end{lem}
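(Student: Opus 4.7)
The plan is to proceed by contradiction. Suppose $t_0 \in D \cap (0,1)$ is right-isolated in $D$: there exists $\varepsilon_0 > 0$ with $(t_0, t_0 + \varepsilon_0) \cap D = \emptyset$. The left-isolation case is handled symmetrically by applying the same argument to the reversed curve $t \mapsto f(1-t)$, which is also optimal with the same parameters.

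By Lemma \ref{lem:doublefini}, $f^{-1}(\{f(t_0)\})$ is finite with at least two elements, so I can pick $s_0 \in f^{-1}(\{f(t_0)\}) \setminus \{t_0\}$; Lemma \ref{lem:extpasdb} in the open case (or $f(0)=f(1)$ in the closed case) allows one to take $s_0 \in (0,1)$. Lemma \ref{lem:tangent} then gives $f'_r(t_0) = f'_\ell(t_0) = -f'_r(s_0) = -f'_\ell(s_0) =: v$, with $\|v\| = L$. The central objects are the two tangent arcs emanating from $p = f(t_0) = f(s_0)$ in the common direction $v$, namely $\alpha(u) = f(t_0 + u)$ and $\beta(u) = f(s_0 - u)$ for $u \in [0,\delta]$ with $\delta \in (0, \varepsilon_0)$ small. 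They satisfy $\alpha(0) = \beta(0) = p$ and $\alpha'_r(0) = \beta'_r(0) = v$. The right-isolation hypothesis forces disjointness: for $u \in (0,\delta]$, since $t_0 + u \notin D$, the point $\alpha(u)$ has only $t_0 + u$ as preimage in $[0,1)$, so $\alpha(u) \neq \beta(w)$ for any $w \in (0,\delta]$.

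The core of the argument is a planar Gronwall-type estimate exploiting $d=2$. Place coordinates so that $p = 0$ and $v$ is along the positive $x$-axis, and let $n = v^\perp$. The tangent-angle functions $\theta_\alpha, \theta_\beta$ of $\alpha$ and $\beta$ are right-continuous and of bounded variation (by the finite curvature $\|f''\| < \infty$ of Theorem \ref{theo:main}), with $\theta_\alpha(0) = \theta_\beta(0) = 0$. The signed normal deviation $h(u) := (\beta(u) - \alpha(u)) \cdot n = L\int_0^u (\sin\theta_\beta(s) - \sin\theta_\alpha(s))\,ds$ then satisfies $h(0) = h'_r(0) = 0$ together with a smallness bound $|h(u)| = o(u)$ inherited from the absolute continuity at $0$ of the total variation of $\theta_\alpha - \theta_\beta$. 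The disjointness from the previous paragraph forces $h$ to keep a constant sign on $(0, \delta]$; without loss of generality $h > 0$.

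Equipped with these estimates, I build an improving competitor $\tilde f$ of $f$: replace the two tangent arcs $\alpha|_{[0,\eta]}$ and $\beta|_{[0,\eta]}$ (total length $\sim 2L\eta$) by a single short arc remaining close to both (length $\sim L\eta$), freeing length of order $L\eta$, and reinvest the freed length in a region of positive $X$-mass not already optimally covered, which exists thanks to $G(L) > 0$ and the strict decrease of $G$ granted by Lemma \ref{lem:propG}. The $L^2$ cost of the local merge is controlled by $h(\eta)^2\eta = o(\eta^3)$ via the Gronwall-type bound, whereas the reinvestment gain is linear in $\eta$; for $\eta$ sufficiently small, $\Delta(\tilde f) < \Delta(f)$, contradicting the optimality of $f$. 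The main obstacle is precisely this last step: constructing $\tilde f$ explicitly and quantifying the balance between the local merge cost and the global reinvestment gain, which is where the Gronwall estimate plays its decisive role.
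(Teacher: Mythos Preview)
Your proposal diverges substantially from the paper's argument and, as written, has a genuine gap.

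The paper does \emph{not} build a competitor curve. Instead, it uses the first-order condition \eqref{eq:formuleth} of Theorem~\ref{theo:main} as the engine of the proof. With $y=f'_r(t_1)/L$ and $z\perp y$, the Euler--Lagrange relation gives, for $t\in(t_1,t_1+\delta]$,
\[
\langle z,f'_r(t)\rangle=\langle z,f'_r(t)-f'_r(t_1)\rangle=-\frac{1}{\lambda}\,\E\bigl[\langle X-f(\hat t),z\rangle\,\I_{\{t_1<\hat t\le t\}}\bigr],
\]
and optimality of $\hat t$ yields $-\langle X-f(\hat t),f(\hat t)-f(s)\rangle\le\tfrac12\|f(\hat t)-f(s)\|^2$ for any $s$. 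Choosing $s$ on the companion arc and using that $z$ is the unit vector from one arc to the other (here $d=2$ is used), one obtains $\langle z,f'_r(t)\rangle\le\tfrac{1}{2\lambda}\sup_{t_1<s\le t}\|D\|$, where $D$ is the separation between the two arcs after a common reparameterization by the $y$-coordinate. The same bound holds on the other side, giving $\|D\|'_r\le C\sup\|D\|$ with $D(0)=0$, and Gronwall forces $D\equiv 0$, contradicting isolation. In short, the Gronwall inequality is a consequence of the Euler--Lagrange equation, not of tangency alone.

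Your route bypasses \eqref{eq:formuleth} entirely, and the estimates you invoke are not established. The bound $|h(u)|=o(u)$ is merely the tangency $h(0)=h'_r(0)=0$ plus bounded variation of the angle; it is far weaker than a Gronwall inequality and does not by itself ``play a decisive role''. Your cost estimate $h(\eta)^2\eta$ presumes the $\hat t$-mass of the two short arcs is $O(\eta)$, which is unjustified (atoms are not excluded, and even without atoms no rate is available); moreover, the linear term $2\E[\|X-\hat X\|\cdot h]$ dominates $h^2$ and must be controlled. The claim that the reinvestment gain is ``linear in~$\eta$'' also needs proof: Lemma~\ref{lem:propG} gives strict monotonicity of $G$, not a linear rate. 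Finally, you admit the competitor $\tilde f$ is not constructed; this is exactly the step that carries the whole argument in your scheme. As it stands the proposal is an outline of a different strategy whose crucial quantitative steps are missing, whereas the paper's proof is a clean analytic argument resting on the first-order condition you never invoke.
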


\begin{proof}
Let $t_0\in D\cap(0,1)$. Assume that there exists $\e>0$ such that $(t_0,t_0+\e)\cap D =\emptyset$ or $(t_0-\e,t_0)\cap D=\emptyset$. We will show that this leads to a contradiction.
Without loss of generality, up to considering $t\mapsto f(1-t)$, we assume that $(t_0-\e,t_0)\cap D =\emptyset$. Let $t_1\in[0,1)$ such that $t_0\not=t_1$ and $f(t_0)=f(t_1)$.
By Lemma \ref{lem:tangent}, one has $f'_\ell(t_0)=- f'_r(t_1)$.

Let $$y=\frac{f'_r(t_1)}{L}$$ and define  the functions $\alpha$ and $\beta$ by
\begin{align*}
\alpha(t)&=\langle f(t)-f(t_1),y \rangle \mbox{ for } t\in [t_1,t_1+\e)
\\ \beta(t)&=\langle f(t)-f(t_0), y\rangle \mbox{ for } t\in (t_0-\e,t_0].
\end{align*}
Notice, since $f(t_0)=f(t_1)$, that $\alpha$ and $\beta$ are restrictions, to $[t_1,t_1+\e)$ and $(t_0-\e,t_0]$ respectively, of the same function. Nevertheless, this notation $\alpha$, $\beta$  were chosen for readability.

The functions $\alpha$ and $\beta$ satisfy the following properties:
\begin{itemize}
	\item $\alpha$ is right-differentiable and  $\alpha'_r(t)=\langle f'_r(t),y\rangle$ for every $t\in[t_1,t_1+\e)$. Since $\alpha '_r(t_1)=L>0$ and $\alpha'_r$ is right-continuous, there exists $\delta\in(0,\e)$, such that $\alpha '_r(t)\geq \delta L$ for every  $t\in[t_1,t_1+\delta]$.
	\item $\beta$ is left-differentiable and $\beta '_\ell(t)=\langle f_\ell'(t), y\rangle$	for every $t\in(t_0-\e,t_0].$ Since $\beta '_\ell(t_0)=-L<0 $ and $\beta_\ell '$ is left-continuous, there exists $\delta'\in(0,\e)$ such that $\beta_\ell '(t)\leq -\delta'L$ for every $t\in [t_0 - \delta', t_0].$
\end{itemize}

Without loss of generality, we may assume that $\delta'=\delta$, since it suffices to pick  the smallest of both values to have the properties on $\alpha'_r$ and $\beta'_\ell$.
In particular, we see that 
\begin{itemize}
	\item $\alpha$ is a bijection  from $[t_1,t_1+\delta]$ onto its image $\alpha([t_1,t_1+\delta])=[0,a],$ where $a:=\alpha(t_1+\delta)>0,$
	\item  $\beta$ is a bijection from $[t_0-\delta,t_0]$ onto its image $\beta([t_0-\delta,t_0])=[0,b]$, where $b:=\beta(t_0-\delta)>0$.
\end{itemize}  
We denote by $\alpha^{-1}$ and $\beta^{-1}$  their inverse functions.

Let $z\in\R^2$ be such that $\|z\|=1$ and $\langle z, y\rangle =0.$
For every $t\in(t_1,\alpha^{-1}(b)],$ we have $\langle f(t)-f(\beta^{-1}(\alpha(t))),y\rangle=0$. Then, we may write $ f(t)-f(\beta^{-1}(\alpha(t)))=\langle f(t)-f(\beta^{-1}(\alpha(t))),z\rangle z$. Moreover, for $t\in(t_1,\alpha^{-1}(b)]$, since there are no further multiple point before $t_0$, $f(t)-f(\beta^{-1}(\alpha(t)))\neq 0$.  Thus, there exists $\sigma\in\{-1,1\}$ such that $$\frac{f(t)-f(\beta^{-1}(\alpha(t)))}{\|f(t)-f(\beta^{-1}(\alpha(t)))\|}=\sigma z .$$ We suppose, without loss of generality, that the vector $z$ was chosen such that $\sigma=1.$
Now, let us show that, for $t\in(t_1,\alpha^{-1}(b)],$ $$\langle z,f'_r(t)\rangle\leq \frac 1{2\lambda}\sup_{t_1\leq s\leq t}\|f(s)-f(\beta^{-1}(\alpha(s)))\|.$$
Since $\langle z,f'_r(t_1)\rangle=0$, we have, according to Theorem \ref{theo:main}, 
\begin{align*}
\langle z,f'_r(t)\rangle&=\langle z,f'_r(t)-f'_r(t_1)\rangle\\&=\int_{(t_1,t]} \langle z,f''(ds)\rangle\\
&=-\frac{1}{\lambda} \E\left[\langle X-f(\hat{t}),z \rangle \I_{\{t_1<\hat{t}\leq t\}}\right]\\
&=-\frac{1}{\lambda} \E\left[\left\langle X-f(\hat{t}),\frac{f(\hat{t})-f(\beta^{-1}(\alpha(\hat{t})))}{\|f(\hat{t})-f(\beta^{-1}(\alpha(\hat{t})))\|} \right\rangle \I_{\{t_1<\hat{t}\leq t\}}\right]
\end{align*}
Besides, for $t\in[0,1]$, starting from $$\|X-f(t)\|^2= 
\|X-f(\hat{t})\|^2+\|f(\hat{t})-f(t)\|^2+2\langle X-f(\hat{t}),f(\hat{t})-f(t)\rangle,$$ we deduce, by optimality of $\hat{t}$, the inequality $$-\langle X- f(\hat{t}),f(\hat{t})-f(t) \rangle\leq \frac{1}{2}\|f(\hat{t})- f(t)\|^2\quad a.s.$$
Hence, we obtain
\begin{align}\label{eq:majr}
\langle z,f'_r(t)\rangle&\leq \frac{1}{2\lambda}\E\left[\|f(\hat{t})-f(\beta^{-1}(\alpha(\hat{t})))\|\I_{\{t_1<\hat{t}\leq t\}}\right]\nonumber\\
&\leq \frac{1}{2\lambda} \sup_{t_1<s\leq t}\|f(s)-f(\beta^{-1}(\alpha(s)))\|.
\end{align}
Similarly, we get, for every $t\in[\beta^{-1}(a),t_0)$,	\begin{equation}\label{eq:majl}
\langle z,f'_\ell(t)\rangle
\leq \frac{1}{2\lambda} \sup_{t\leq s< t_0}\|f(s)-f(\alpha^{-1}(\beta(s)))\|.
\end{equation} This may be seen for instance by considering the optimal curve parameterized in the reverse direction  $t\mapsto f(1-t)$.
For $x\in[0,a\land b)$, let $D(x)=f(\alpha^{-1}(x))-f(\beta^{-1}(x))$. This function $D$ is right-differentiable and 
$$ D'_r(x)=\frac{f'_r(\alpha^{-1}(x))}{\alpha'_r(\alpha^{-1}(x))}-\frac{f'_\ell(\beta^{-1}(x))}{\beta'_\ell(\beta^{-1}(x))}.$$
Moreover, $\alpha_r '(\alpha^{-1}(x))\geq \delta L$ and $-\beta '_\ell(\beta^{-1}(x))\geq \delta L$, so that 
\begin{align*}
\langle D'_r(x),z\rangle&\leq \frac{1}{\delta L}(\langle z, f'_r(\alpha^{-1}(x))\rangle+\langle z, f'_\ell(\beta^{-1}(x))\rangle)\\&\leq \frac{1}{\delta L\lambda}\sup_{u\leq x}\|D(u)\|.
\end{align*}For the last inequality, we used the upper bounds \eqref{eq:majr} and \eqref{eq:majl} together with the monotony of $\alpha$ and $\beta$. Observe, since $z=\frac{D(x)}{\|D(x)\|}$, that 
$\langle D'_r(x),z\rangle$ is the right-derivative of $\|D(x)\|$. As $D(0)=0$, the Gronwall Lemma implies that $D(x)=0$ for all $x\in[0,a\land b)$, which yields a contradiction, since  the considered  multiple point is supposed to be left-isolated.
\end{proof}

We may now state the injectivity result in dimension 2, for open and closed curves.

\begin{pro}
	\begin{enumerate}
		\item[$(i)$] If $\mathcal C_L=\{\phi\in[0,1]\to\R^2,\L(\phi)\leq L\}$, then $f$ is injective.
		\item[$(ii)$] If $\mathcal C_L=\{\phi\in[0,1]\to\R^2,\L(\phi)\leq L,\phi(0)=\phi(1)\}$, then either $f$ restricted to $[0,1)$ is injective or $\mbox{Im} f$ is a segment.
	\end{enumerate}
\end{pro}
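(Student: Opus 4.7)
My strategy is to prove both parts by contradiction, exploiting $\bar t:=\sup D$ as a candidate ``last'' multiple-point parameter whose very existence should contradict the no-isolation property of Lemma~\ref{lem:Gronwall}. The open case (i) goes through cleanly thanks to the endpoint-control lemmas, while the closed case (ii) needs extra structural work because these lemmas have no direct analogue.

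For (i), assume $D\neq\emptyset$. Lemmas~\ref{lem:extpasdb} and~\ref{lem:lastdouble} together give $0<\bar t\leq 1-\delta<1$. The first main step is to show $\bar t\in D$. Take $t_n\in D$ with $t_n\to\bar t$ and partners $s_n\in[0,1)\setminus\{t_n\}$ satisfying $f(s_n)=f(t_n)$, and extract $s_n\to s$, so that $f(s)=f(\bar t)$. If $s\neq\bar t$ then Lemma~\ref{lem:extpasdb} rules out $s=1$, hence $s\in[0,1)$ and $\bar t\in D$. If instead $s=\bar t$, I would extract a further subsequence along which the intervals with endpoints $s_n\wedge t_n$ and $s_n\vee t_n$ are pairwise disjoint (feasible by a greedy selection since $s_n,t_n\to\bar t$ with $s_n\neq t_n$); applying Lemma~\ref{lem:turn} to each interval, the total variations sum to $\|f''\|=\infty$, contradicting the finite-curvature part of Theorem~\ref{theo:main}. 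Thus $\bar t\in D\cap(0,1)$, but then Lemma~\ref{lem:Gronwall} provides elements of $D$ strictly greater than $\bar t$, contradicting its definition. Therefore $D=\emptyset$, and combined with Lemma~\ref{lem:extpasdb} this yields injectivity of $f$ on all of $[0,1]$.

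For (ii), the very same supremum argument shows that if $\sup D<1$ then $\sup D\in D\cap(0,1)$, again contradicting Lemma~\ref{lem:Gronwall}; hence $\sup D=1$, and by the symmetric argument $\inf D=0$. Viewing $D$ as a subset of the circle $\T=[0,1]/(0\sim 1)$, a cyclic reparametrization lets Lemma~\ref{lem:Gronwall} apply at every point of $D$, so $D$ has no isolated points in $\T$; inspecting the Gronwall identity in the proof of Lemma~\ref{lem:Gronwall} shows in fact that a full open arc around each point of $D$ lies again in $D$, so $D$ is open in $\T$. Together with $\inf D=0$ and $\sup D=1$, this forces $D=\T$: every point of $f([0,1])$ must be a multiple point.

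The main obstacle is to upgrade this total multiplicity to the conclusion that $f([0,1])$ is a segment. My plan is to assemble the local matchings from the proof of Lemma~\ref{lem:Gronwall} into a continuous involution $\varphi:\T\to\T$ satisfying $f\circ\varphi=f$; by Lemma~\ref{lem:tangent}, $f'_r\circ\varphi=-f'_r$, so $\varphi$ is orientation-reversing on $\T$ and hence has exactly two fixed points $a,b\in\T$. The curve $f$ then reduces to a constant-speed parametrization of the arc $f([a,b])$ traversed once forwards and once backwards. Ruling out non-segment shapes for this arc is the most delicate step: I would invoke the order-one equation~\eqref{eq:formuleth} of Theorem~\ref{theo:main} to show that any nonzero curvature of $f([a,b])$ would allow a competitor closed curve---obtained by locally flattening a piece of the arc while retaining the length budget---with strictly smaller criterion, contradicting the optimality of $f$.
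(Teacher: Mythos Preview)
Your argument for (i) is essentially the paper's, with the useful addition of verifying explicitly that $\bar t\in D$.

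For (ii) your architecture matches the paper's---construct an involution $\varphi$ with $f\circ\varphi=f$, locate its fixed points, deduce the back-and-forth structure---but the execution has an internal inconsistency and a wrong final step.

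The inconsistency: you conclude $D=\T$ and then assert that $\varphi$ has two fixed points $a,b$. These are incompatible: if every $t$ has a partner $t'\neq t$ and $\varphi(t)=t'$, then $\varphi$ is fixed-point free. Moreover, ``$D$ open, $\inf D=0$, $\sup D=1$'' does not force $D=\T$; it does not even force $\T\setminus D$ to be finite. In fact $\T\setminus D$ need not be empty: the paper shows it is finite and consists precisely of cusps (points with $f'_r(t)=-f'_\ell(t)$), by taking $t_k\in D$ with $t_k\to t\notin D$, applying Lemma~\ref{lem:tangent} to their partners $s_k$, and using that $s_k\to t$ as well. These cusps are exactly the fixed points of $\varphi$ (one sets $\varphi(t)=t$ there). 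You also omit the check that multiplicity is exactly $2$ on $D$, which is needed for $\varphi$ to be well-defined; this again follows from Lemma~\ref{lem:tangent}, since a triple point would force $f'_r=0$.

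The wrong turn is the final step. Once $f$ traverses the arc $f([a,b])$ forwards and then backwards, ``locally flattening a curved piece'' changes the range of the curve, so there is no reason $\Delta$ should decrease; invoking \eqref{eq:formuleth} does not rescue this. The paper's argument is elementary and does not use the order-one equation: replace the return trip by the straight segment from $f(b)$ back to $f(a)$. The resulting closed curve has range containing $f([a,b])=f([0,1])$, hence $\Delta$ does not increase, while its length equals $L/2+\|f(a)-f(b)\|$, which is strictly less than $L$ unless $f([a,b])$ is already the segment $[f(a),f(b)]$.
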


\begin{proof}
$(i)$ $\mathcal C_L=\{\phi\in[0,1]\to\R^2,\L(\phi)\leq L\}$.\\
Thanks to Lemma \ref{lem:lastdouble}, if $f$ has multiple points, there exists a last multiple point. As such, this multiple point is right-isolated. However, by Lemma \ref{lem:Gronwall}, this cannot happen. So, $f$ is injective.

$(ii)$  $\mathcal C_L=\{\phi\in[0,1]\to\R^2,\L(\phi)\leq L,\phi(0)=\phi(1)\}$.\\ 
We assume that $f$ restricted to $[0,1)$ is not injective. So, our aim is to prove that $\mbox{Im} f$ is a segment.
As $f$ is supposed not to be injective, the set $D=\{ t\in[0,1) \mid \mbox{Card} ([0,1) \cap f^{-1}(\{f(t)\})) \ge 2 \}$ is non-empty. Without loss of generality, we can assume that $D\cap(0,1)\not=\emptyset$. Indeed, if $D=\{0\}$, we can replace $f$ by the curve $t\mapsto f((t+1/2) \text{ mod } 1)$ for which $D=\{1/2\}$.

Let us show that $D$ is dense in $(0,1)$. Proceeding by contradiction, we assume that there exists a non-empty open interval $(a,b)\subset(0,1)$ such that $D\cap (a,b)=\emptyset$.
Since $D\cap (0,1)\not=\emptyset$, one has $D\cap (0,a] \not=\emptyset$ or $D\cap [b,1) \not=\emptyset$.
Consider the case where $D\cap [b,1) \not=\emptyset$. Define $\beta=\inf( D\cap [b,1) )$. There exist two sequences $(t_k)_{k\ge 1}\subset D$ and $(s_k)_{k\ge 1}\subset D$ such that $t_k\downarrow \beta$, $f(t_k)=f(s_k)$ and $s_k\not= t_k$ for all $k\geq 1$.
Up to an extraction, $s_k$ converges to a limit $s\in[0,1]$. If $\beta\not= s$ then $\beta\in D$ is left-isolated which is impossible by Lemma \ref{lem:Gronwall}. Thus $s=\beta$ and consequently $s_k \ge \beta$ for $k$ large enough. This yields $f'_r(s_k) \to f'_r(\beta)$. Besides, for all $k$, $f'_r(t_k) \to f'_r(\beta)$ and, by Lemma \ref{lem:tangent}, $f'_r(t_k)=-f'_r(s_k)$, which contradicts the fact that $f$ has speed $L$.
The case where $D\cap (0,a] \not=\emptyset$ is similar.

The next step is to prove that the set $[0,1)\setminus D$ is finite.
Let $t\in(0,1)\setminus D$. Since $D$ is dense, there exists a sequence $(t_k)_{k\geq 1}\in D$ such that $t_k \downarrow t$. For every $k\geq 1$, there exists $s_k\neq t_k$ such that $f(t_k)=f(s_k)$. If $s\in[0,1]$ is a limit point of $(s_k)$, then $f(t)=f(s)$ which implies $t=s$ since $t\notin D$ and $t\not=0$. Therefore $\lim_{k\to\infty} s_k=t$.
Up to an extraction, we may assume that $(s_k)$ converges increasingly or decreasingly to $t$. 
By Lemma \ref{lem:tangent}, one has $f'(t_k)=-f'(s_k)$ for $k$ large enough.
If $s_k\downarrow t$, one obtains a contradiction: $f'_r(t)=\lim_k f'_r(t_k)=-\lim_k f'_r(s_k)=-f'_r(t)$. Thus $s_k\uparrow t$ and one gets $f'_r(t)=-f'_\ell(t)$. This means that $f(t)$ is a cusp. Since $\|f''\|([0,1])<\infty$, there are only a finite number of such points.

Observe that, as a consequence of Lemma \ref{lem:tangent}, 
for every $t\in[0,1)$, $\mbox{Card} ([0,1) \cap f^{-1}(\{f(t)\}))<3$.
Indeed, if a point has multiplicity at least 3, that is there exist $0\le t_1<t_2<t_3<1$ such that 
$f(t_1)=f(t_2)=f(t_3),$ then, on the one hand, $f'_r(t_1)=-f'(t_2)=-f'(t_3)$,  and on the other hand,  $f'(t_2)=-f'(t_3)$. Thus, one obtains again a contradiction: $f'_r(t_1)=f'(t_2)=f'(t_3)=0$.
In other words, $D=\{ t\in[0,1) \mid \mbox{Card} ([0,1) \cap f^{-1}(\{f(t)\})) = 2 \}.$

We introduce the function $\phi:[0,1)\to[0,1)$, defined as follows: for $t\in[0,1)\setminus D$, set
$ \phi(t)=t$ and for $t\in D$, set $\varphi(t)=t'$ where $t'\in f^{-1}(\{f(t)\})$ and $t'\notin t$. Note that $\phi$ is an involution.

Let us show that the function $\varphi$ is continuous on $(0,1)\setminus\{\phi(0)\}$. 
First, observe that $f$ is derivable on $D\cap (0,1)$ by Lemma \ref{lem:tangent}, and that $f'$ is continuous on $D\cap (0,1)$ since $f'_r$ is right-continuous and $f'_\ell$ is left-continuous.
Let $t\in(0,1)$ such that $t\not=\phi(0)$ and let $(t_k)_{k\geq 1}$ be a sequence converging to $t$.
Let $s\in[0,1]$ be a limit point of $(\phi(t_k))$. Since $f(t_k)=f(\phi(t_k))$,  for all $k\geq 1$, one has $f(s)=f(t)$.
Necessarily, $s\in(0,1)$ since $t\not=\phi(0)$.
If $t\notin D$, one has $s=t=\phi(t)$. If $t\in D$, then $s\in\{t,\phi(t)\}$. Since $D\cap(0,1)$ is open, $t_k\in D$ for $k$ large enough, hence
$f'(\varphi(t_k))=-f'(t_k)$ for $k$ large enough. Thus $f'(s)=-f'(t)$ and consequently $s=\phi(t)$.

Let us show that $\phi$ is derivable on $D\cap (0,1) \setminus\{\phi(0)\}$ and $\phi'(t)=-1$ for all $t\in D\cap (0,1) \setminus\{\phi(0)\}$. Let $t\in D\cap (0,1)$, $t\not=\phi(0)$. For all $h\in\R$ such that $|h|< t\land(1-t)$, we have
\begin{align*}
f(t+h)-f(t) &= f(\varphi(t+h))-f(\varphi(t))\\
&= \int_{\phi(t)}^{\phi(t+h)} f'(s) ds\\
&= \bigl(\phi(t+h)-\phi(t)\bigr) \int_0^1 f'\bigl(\phi(t)+u(\phi(t+h)-\phi(t))\bigr) du.
\end{align*}
Besides, since $f'$ is continuous at the point $\phi(t)\in D\cap(0,1)$ and $\phi$ is continuous at the point $t$, one has $\lim_{h\to 0} \int_0^1 f'\bigl(\phi(t)+u(\phi(t+h)-\phi(t))\bigr) du= f'(\phi(t))=-f'(t)$. One deduces that
$\lim_{h\to 0} \bigl(\phi(t+h)-\phi(t)\bigr)/h=-1$.

Let us prove that $\phi(\phi(0)/2+t)=\phi(0)/2 + 1-t  \mod 1$ for all $t\in[-\phi(0)/2, 1-\phi(0)/2)$.
From the two previous steps, one deduces that
if $\phi(0)=0$, $\phi(t)=1-t$ for all $t\in(0,1)$, as desired, while, if $\phi(0)\in(0,1)$,
there exist two constants $c_1$ and $c_2$ such that 
$$ \phi(t)=c_1-t \quad \forall t\in(0,\phi(0)), \quad \phi(t)=c_2-t \quad \forall t\in(\phi(0),1).$$
It remains to prove that $c_1=\phi(0)$ and $c_2=1+\phi(0)$.
As $\phi$ takes its values in $[0,1)$, one has $\phi(0)\le c_1\le 1$ and $1\le c_2\le 1+\phi(0)$.
Moreover, since $\phi$ is a bijection,
$c_2-t \ge c_1$ for $t\ge \phi(0)$ or $c_2-t\le c_1-\phi(0)$ for $t\ge \phi(0)$, that is $c_2-1\ge c_1$ or $c_2\le c_1$. In the first case, one gets $c_1=\phi(0)$ and $c_2=1+\phi(0)$. In the second case, one gets $c_1=c_2=1$,
which is not possible: necessarily, $\phi(0)=1/2$, since otherwise $\phi(1-\phi(0))=\phi(0)$ which yields $1-\phi(0)=0$, and we see that the restriction of $f$ to $[0,1/2]$ is a closed curve with the same image as $f$, hence $f$ is not optimal.

	Finally, define the curve $\tilde f$ by
	$$\tilde f(t)= f\bigl((\phi(0)/2 +t) \mod 1\bigr).$$
	This curve $\tilde f$ has the same image as $f$ and, from the last step,
	$\tilde f(t)=\tilde f(1-t)$ for all $t\in[0,1]$.
	Let us show that $\mbox{Im} f$ is a segment. Otherwise, the curve $g$ defined by
	$$ g(t)=\tilde f(t) \quad\text{if $t\in[0,1/2]$}, \quad
	g(t)=\tilde f(1/2)+ 2(t-1/2)\bigl(\tilde f(1)-\tilde f(1/2)\bigr) \quad\text{if $t\in[1/2,1]$}$$
	satisfies $\L(g)<\L(f)$ and $\Delta(g)\le \Delta(f)$, since $\mbox{Im} f=\tilde f([0,1/2])$, thus $f$ cannot be optimal.

\end{proof}

\section{Examples of principal curves}

\subsection{Uniform distribution on an enlargement of a curve}

The purpose of this section is to study the principal curve problem for the uniform distribution on an enlargement of some generative curve.
For $A\subset\R^d$ and $r\ge 0$, we denote by 
$$A\oplus r=\left\{x\in\R^d \mid d(x,A) \le r\right\}$$
the $r$-enlargement of $A$.
 Under some conditions on the generative curve $f:[0,1]\to\R^d$, for $r$ small enough, it turns out that the image of an optimal curve with length $\L(f)$ for the uniform distribution on an $r$-enlargement of $\mbox{Im} f$ is necessarily $\mbox{Im} f$. More specifically, the radius $r$ must not exceed the reach of $\mbox{Im} f$.

The reach of a set $A\subset\R^d$ is  the supremum of the radii $\rho$ such that every point at distance at most $\rho$ of $A$ has a unique projection on $A$. 
More formally, following  \cite{F59}, we define
for $A\subset\R^d$
$$ \mbox{reach}(A)=\sup\left\{\rho\ge 0 \mid \forall x\in\R^d \quad d(x,A)\le \rho \Rightarrow \exists! a\in A\quad d(x,a)=d(x,A)\right\} \in[0,+\infty].$$
				
The question of the optimality of the generative curve when considering the uniform distribution on an enlargement has been first addressed in dimension $d=2$ in \cite{MT05}. Observe that related ideas can be found in \cite{GPVW}.  Our proof in arbitrary dimension $d\geq 1$ relies on arguments in \cite{F59}, which moreover allow to show uniqueness.

\begin{theo} \label{th}
	Let $f:[0,1]\to\R^d$ be a  curve. Suppose that $f$ is injective, differentiable, $f'$ is Lipschitz, and there exists $c>0$ such that $\|f'(t)\|\ge c$ for all $t\in[0,1]$. Then, the reach of $\mbox{Im} f$ is positive. Let $r\in (0, \mbox{reach}(\mbox{Im} f]$ and let $X$ be a random vector uniformly distributed on $ \mbox{Im} f\oplus r.$
	Consider a function $V:[0,\infty)\to[0,\infty)$ continuous, increasing and such that $V(0)=0$. Then, for every curve $g:[0,1]\to \R^d$ such that $\L(g)\le \L(f)$ one has
	$$ \E\big[V\left(d(X, \mbox{Im} f )\right)\big]\le \E\big[V\left(d(X, \mbox{Im} g )\right)\big].$$
	with equality if and only if $\mbox{Im} g=\mbox{Im} f$.
\end{theo}

The proof of the theorem is based on two lemmas.
For $k\ge 1$, $\lambda_k$ denotes the Lebesgue measure on $\R^k$ and $\alpha_k$  the volume of the unit ball in $\R^k$.
From \cite[Lemma 42]{MT05}, we have the next result.
\begin{lem} \label{maj}
	Let $A$ be a compact connected subset of $\R^d$ with $\mathcal H^1(A)<\infty$. Then for all $r\ge 0$ one has
	$$ \lambda_d(A \oplus r) \le \mathcal H^1(A) \alpha_{d-1} r^{d-1} + \alpha_d r^d.$$
\end{lem}

\begin{lem}
	Let $f:[0,1]\to\R^d$ be a curve. Suppose that $f$ is injective,  $f$ is differentiable, $f'$ is Lipschitz, and there exists $c>0$ such that $\|f'(t)\|\ge c$ for all $t\in[0,1]$.
	Then, the reach of $A=\mbox{Im} f$ is positive and for all $r\le \mbox{reach}(A)$ one has
	\begin{equation} \label{vol}
	\lambda_d\left(A\oplus r\right)= \L(f)\alpha_{d-1} r^{d-1} + \alpha_d r^d
	\end{equation}
	Moreover, 
	one has
	\begin{equation} \label{fron}
	\left\{x\in A\oplus r \mid d\left(x,\partial (A\oplus r)\right) \ge r\right\} \subset A.
	\end{equation}
\end{lem}

\begin{proof}
The assumptions on $f$ imply that there exists $\e>0$, a set $B\subset\R^d$ and a function $\phi: (-\e,1+\e)\to B$
such that $\phi$ is bijective, $\phi=f$ on $[0,1]$, $\phi$ is differentiable, $\phi'$ is Lipschitz and $\phi^{-1}$ is Lipschitz. From  \cite[Theorem 4.19]{F59}, we deduce that $\mbox{reach}(A)>0$.
For $r\in(0,\mbox{reach}(A))$, equality \eqref{vol} follows from \cite[Theorem 5.6 \& Remark 6.14]{F59}.
For $r=\mbox{reach}(A)$, we can write
$$\L(f)\alpha_{d-1} r^{d-1} + \alpha_d r^d
= \lim_{n\to\infty} \lambda_d\bigpar{A\oplus(r-1/n)}=\lambda_d\bigpar{\{x\in\R^d \mid d(x,A)<r\}}$$
and $\lambda_d\bigpar{A\oplus r}
\le \L(f)\alpha_{d-1} r^{d-1} + \alpha_d r^d$ by Lemma \ref{maj}. Thus, equality \eqref{vol} holds.

Now, we prove \eqref{fron}. Let $x\in A\oplus r$ such that $d\bigpar{x,\partial (A\oplus r)}\ge r$.
According to \cite[Corollary 4.9]{F59}, if $0<s<\mbox{reach}(A)$ and $A'_s=\{y\in\R^d\mid d(y,A)\ge s\}$ then
$$ d(y,A'_s)=s-d(y,A) \quad \text{whenever $0<d(y,A)\le s$}.$$
Suppose that $d(x,A)>0$, then for all $s\in [d(x,A), r)$ one has
$d(x,A)=s-d(x,A'_s)$. Since $\lim_{s\to r} d(x,A'_s)=d(x,A'_r)=d\bigpar{x,\partial (A\oplus r)}$, one gets $d(x,A)\le 0$. This proves that $x\in A$.

\end{proof}

\begin{proof}[Proof of Theorem \ref{th}]
We set $A=\mbox{Im} f$ and $B=\mbox{Im} g$. On the one hand, denoting by $V^{-1}$ the inverse of $V:[0,\infty)\to[0,V(\infty))$,
\begin{align*}
\E\bigcro{V(d(X,B))} &= \int_0^\infty  \P\bigcro{ V(d(X,B)) >t} dt\\
&= \int_{0}^{V(\infty)}  \Bigpar{1-\P\bigpar{ d(X,B) \le V^{-1}(t)}} dt\\
&=\int_0^{V(\infty)} \Bigpar{1-\frac{ \lambda_d\bigpar{(B\oplus V^{-1}(t))\cap (A\oplus r)} }{ \lambda_d(A\oplus r)}} dt.
\end{align*}
On the other hand, for $t\le V(r)$, by Lemma \ref{maj} and equation \eqref{vol}, one gets
\begin{multline} \label{ine}
\lambda_d\bigpar{(B\oplus V^{-1}(t))\cap (A\oplus r)} \le
\lambda_d\bigpar{B\oplus V^{-1}(t)} \le \mathcal H^1(\mbox{Im} g) \alpha_{d-1} V^{-1}(t)^{d-1} + \alpha_d V^{-1}(t)^d\\
 \le \L(f) \alpha_{d-1} V^{-1}(t)^{d-1} + \alpha_d V^{-1}(t)^d=\lambda_d\bigpar{A\oplus V^{-1}(t)}.\end{multline}
Therefore, for all $t\in[0,V(\infty))$,
$$1- \frac{ \lambda_d\bigpar{(B\oplus V^{-1}(t))\cap (A\oplus r)} }{ \lambda_d(A\oplus r)}
\ge \biggcro{1- \frac{\lambda_d\bigpar{A\oplus V^{-1}(t)}}{ \lambda_d\bigpar{A\oplus r}}}_+ .$$
Consequently,
\begin{align*}
\E\bigcro{V(d(X,B))}
&\ge \int_0^{V(r)}  \biggpar{1- \frac{\lambda_d\bigpar{A\oplus V^{-1}(t)}}{ \lambda_d\bigpar{A\oplus r}}} dt
=\E\bigcro{V(d(X,A))}.
\end{align*}
Suppose that $\E\bigcro{V(d(X,B))}=\E\bigcro{V(d(X,A))}$. Then, we have
$$ \lambda_d\bigpar{A\oplus r} =\lambda_d\bigpar{(B\oplus V^{-1}(t))\cap (A\oplus r)}\; \mbox{$dt-$a.e. on } [V(r),+\infty).$$
 By right continuity with respect to $t$, we obtain that
$\lambda_d\bigpar{A\oplus r} =\lambda_d\bigpar{(B\oplus r)\cap (A\oplus r)}.$
From the chain of inequalities \eqref{ine} with $t=V(r)$, we deduce that $\L(f)=\mathcal H^1(\mbox{Im} g)$ and
$$ \lambda_d\bigpar{(A\oplus r)\cap (B\oplus r)^c } =\lambda_d\bigpar{(A\oplus r)^c\cap (B\oplus r) }=0.$$
Let us show that $A\oplus r=B\oplus r$. Suppose that $(A\oplus r)\cap (B\oplus r)^c\not=\emptyset$. Then one can find $x\in\R^d$ and $a\in A$ such that $d(x,a)\le r$ and $d(x,B)>r$. Set $y=x-\e(x-a)$ where $0<\e\le 1$ and
$\e < d(x,B)/r-1$. One has
$d(y,a)\le r-\e r<r$ and $d(y,B)>d(x,B)-\e r>r$. Thus $y$ belongs to the interior of $(A\oplus r)\cap (B\oplus r)^c$ which implies $\lambda_d\bigpar{(A\oplus r)\cap (B\oplus r)^c }>0$.
Therefore $(A\oplus r)\cap (B\oplus r)^c=\emptyset$. Similarly one can prove that $(A\oplus r)^c\cap (B\oplus r)=\emptyset$.

Finally, from \eqref{fron}, we deduce that
$$B\subset \{x\in B\oplus r \mid d\bigpar{x,\partial (B\oplus r)} \ge r\}
=\{x\in A\oplus r \mid d\bigpar{x,\partial (A\oplus r)} \ge r\}\subset A.$$
Since $\L(f)=\mathcal H^1(\mbox{Im} g)$, this implies that $A=B$.
\end{proof}

\subsection{Uniform distribution on a circle}\label{section:exe}

In this section, we investigate the  principal curve problem for a particular distribution, the  uniform distribution on a circle.

\begin{pro}Consider the unit circle centered at the origin with parameterization given by  $$g(t)=(\cos(2\pi t), \sin(2\pi t))$$ for $t\in[0,1]$. Let $U$ be a uniform random variable on $[0,1]$ and let $X=g(U)$.
Then, for every $L<2\pi$,  the circle centered at the origin with radius $\dfrac{L}{2\pi}$ is the unique closed principal curve with length $L$ for $X$.
\end{pro}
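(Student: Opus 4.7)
The circle $f_0(t)=r(\cos 2\pi t,\sin 2\pi t)$ with $r=L/(2\pi)$ is the obvious candidate, so I would start by verifying it is a critical point for Theorem \ref{theo:main}. Since $X=g(U)$ lies on the unit circle and the nearest point on $f_0([0,1])$ to any $x\in S^1$ is $rx$, one has $\Delta(f_0)=(1-r)^2$; because $L<2\pi$ forces $r<1$, we get $G(L)>0$ and Theorem \ref{theo:main} applies. A direct substitution then shows that, with $\hat t(g(U))=U$, the pair $(f_0,\lambda)$ satisfies \eqref{eq:formuleth} for the explicit value $\lambda=r(1-r)/L^2$, so $f_0$ is indeed admissible.

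Now let $f$ be any optimal closed curve with $\Delta(f)=G(L)\le(1-r)^2$. By the injectivity result (Proposition in Section \ref{section:inj}, part (ii)), either $f$ restricted to $[0,1)$ is injective, or $f([0,1])$ is a segment. The segment case is ruled out by direct computation: rotational invariance of the law of $X$ allows one to centre the segment at the origin on the $x$-axis, so $\Delta$ reduces to an explicit integral in the length $s\le L/2$ which is easily seen to strictly exceed $(1-r)^2$. Hence $f$ is injective on $[0,1)$ and parametrises a Jordan curve $\Gamma$ of length $L$.

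For the injective case I would pass to a constant-speed parametrisation $|f'|=L$ and use complex notation, $f(t)=\sum_{n\in\Z}c_n e^{2\pi i n t}$. Parseval gives $\sum_n(2\pi n)^2|c_n|^2=L^2$, so in particular $(2\pi)^2|c_1|^2\le L^2$, i.e.\ $|c_1|\le r$, with equality iff $c_n=0$ for $n\ne1$. Next I introduce the rotational average
\[
\bar f(t)=\frac{1}{2\pi}\int_0^{2\pi}R_\alpha f\!\left(t-\tfrac{\alpha}{2\pi}\right)d\alpha,
\]
which, after a change of variable, evaluates to $\bar f(t)=c_1 e^{2\pi i t}$: the concentric circle of radius $|c_1|$, whose criterion value is $\Delta(\bar f)=(1-|c_1|)^2$. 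The target inequality is then
\[
\Delta(f)\;\ge\;\Delta(\bar f)\;=\;(1-|c_1|)^2\;\ge\;(1-r)^2,
\]
which combined with $\Delta(f)=G(L)\le(1-r)^2$ forces equality throughout: $|c_1|=r$, whence $c_n=0$ for $n\ne1$, so $f(t)=c_1 e^{2\pi i t}$ traces the circle of radius $r$ centred at the origin, as claimed.

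The main obstacle I expect is proving the comparison $\Delta(f)\ge\Delta(\bar f)$. A straight Jensen bound $\|X-\bar f(t)\|^2\le\E_\alpha\|X-R_\alpha f(t-\alpha/(2\pi))\|^2$ does not survive minimisation over $t$, since $\min_t$ and $\E_\alpha$ do not commute in the useful direction. The plan is instead to combine it with the identity $\E[\langle X-\hat X,\hat X\rangle]=\lambda L^2$ of Remark \ref{rem:formuleIP} and the centring $\E[\hat X]=\E[X]=0$ from \eqref{eq:XhatX}: these allow one to express $\Delta(f)=1-\E[\|\hat X\|^2]-2\lambda L^2$ and, using Cauchy--Schwarz together with the Parseval identity applied to $\hat X=f(\hat t)$, to bound $\E[\langle X,\hat X\rangle]\le|c_1|$. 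The sharpness of the inequality is dictated by the equality case of Jensen, which forces $R_\alpha f(s-\alpha/(2\pi))$ to be independent of $\alpha$, and hence $f(s)=R_{2\pi s}f(0)$, recovering the concentric circle and giving the uniqueness statement.
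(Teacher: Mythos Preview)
Your route is quite different from the paper's. The paper never uses Fourier series, injectivity, or the first-order condition \eqref{eq:formuleth}. Instead it takes an arbitrary optimal closed curve $f$, forms the convex hull $K$ of $f([0,1])$, and considers the projected curve $h=\pi_K\circ g$. Trivially $\Delta(h)\le\Delta(f)$; the key step is $\L(h)\le\L(f)$, proved by a Cauchy--Crofton argument comparing the number of intersections of a generic line with $\partial K$ and with $f([0,1])$. Optimality of $f$ then forces $h([0,1])\subset f([0,1])$, and a strict-convexity argument with the rotations $A_\theta$ shows that $\pi_{A_\theta(K)}(X)=\pi_K(X)$ a.s.\ for every $\theta$, whence $h$ is a circle centred at the origin of radius $L/2\pi$.

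Your proposal has a genuine gap precisely where you flag it. You want $\Delta(f)\ge(1-|c_1|)^2$ and suggest combining $\Delta(f)=1-\E[\|\hat X\|^2]-2\lambda L^2$ with a bound $\E[\langle X,\hat X\rangle]\le |c_1|$ obtained ``by Cauchy--Schwarz together with Parseval applied to $\hat X=f(\hat t)$''. But Parseval controls $\int_0^1 f(t)\,dt$ and $\int_0^1\|f(t)\|^2\,dt$, not $\E[f(\hat t)]$ or $\E[\|f(\hat t)\|^2]$: the law $\nu_{\hat t}$ is in general far from Lebesgue (Theorem \ref{theo:main} even allows atoms). Concretely, since $\E[\langle X,\hat X\rangle]=\E[\|\hat X\|^2]+\lambda L^2$, your plan would require \emph{both} $\E[\langle X,\hat X\rangle]\le |c_1|$ and $\E[\|\hat X\|^2]\ge |c_1|^2$; neither follows from the tools you cite once $\hat t$ is non-uniform, and the unknown $\lambda$ does not drop out. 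The identities $\bar f(t)=c_1e^{2\pi i t}$ and $|c_1|\le r$ are correct, but without a valid comparison $\Delta(f)\ge\Delta(\bar f)$ they do not close the argument. If you want to rescue the Fourier approach you would need an inequality linking the distance functional directly to the first Fourier coefficient of the \emph{parametrisation}, independently of the (unknown) distribution of $\hat t$; nothing in the paper supplies such an estimate, which is presumably why the authors chose the convex-hull/Cauchy--Crofton route instead.
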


\begin{proof}
	Let $f:[0,1]\to\R^2$ be  an optimal closed curve with length $L$.
	We denote by $K$ the convex hull of $\mbox{Im} f$. Since $\mbox{Im} f$ is compact,  $K$ is a compact convex set (consequence of Caratheodory's theorem; see, e.g., \cite{hiriart2012}).
	Notice that $\mbox{Im} f$ is included in the unit disk: indeed, if not, since $f$ is a closed curve, with $\L(f)<2\pi$, there exist $u_1$ and $u_2$, such that $f(u_1)$ and $f(u_2)$ belong to the unit circle and the arc $t\in(u_1,u_2)\mapsto f(t)$ is outside the disk, which is not optimal since replacing this arc by the corresponding unit circle arc yields a better and shorter curve. In turn, the convex hull $K$ is also included in the unit disk, by convexity of the latter.
			Let $\pi_K: \R^2 \to K$ denote the projection onto $K$ et define the curve $h$ by $h(t)=\pi_K(g(t))$ for $t\in[0,1]$.
	By this definition of $h$ as projection of the unit circle on a set included in the unit disk containing $\mbox{Im} f$, we have \begin{equation*}
	\Delta(h)\leq\Delta(f).
	\end{equation*}
	\begin{itemize}
		\item Let us prove that $h$ has length at most $L$.
	First, note that $h$ has finite length, since $\pi_K$ is Lipschitz. By properties of the projection on a closed convex set, we know that the set of points of $\R^2$ projecting onto a given element of the boundary $\partial K$ of $K$ is a cone. This ensures that $h:[0,1]\to\partial K$ is onto, because a cone with vertex in the unit disk intersects the unit circle $\mbox{Im} g$ at least once. More specifically, if the cone reduces to a half-line (degenerated case), then it intersects  $\mbox{Im} g$ exactly once. Otherwise, the cone is the region delimited  by two distinct half-lines with common origin in the disk, and, thus, contains an infinity of such distinct half-lines, each of them intersecting $\mbox{Im} g$ once. Hence, for every  $v\in \mbox{Im} h$, there is either one $t$ such that $v=h(t)$, or an infinity.

	We will use Cauchy-Crofton's formula on the length of a  curve (for a proof, see, e.g., \cite{ayari}). Let $d_{r,\theta}$ denote the line with equation $x\cos\theta+y\sin\theta=r$. For every  curve $\phi=(\phi^1,\phi^2)$, if $$N_\phi(r,\theta)=\mbox{Card}(\{t\in[0,1],\phi(t)\in d_{r,\theta}\})=\mbox{Card}(\{t\in[0,1],\phi^1(t)\cos\theta+ \phi^2(t)\sin\theta=r\}),$$  then the length of $\phi$ is given by $$\frac 1 4 \int_0^{2\pi}\int_{-\infty}^{\infty}N_\phi(r,\theta)drd\theta .$$
	
	Let us compare $N_h(r,\theta)$ and $N_f(r,\theta)$ for $(r,\theta)\in \R\times [0,2\pi]$. To begin with, note that $N_h(r,\theta)$ is finite almost everywhere since $h$ has finite length. So, we  need only consider the cases where $N_h(r,\theta)$ is finite. This allows to exclude the points $v\in \mbox{Im} h$ such that $h^{-1}(\{v\})$ is infinite, as well as the cases where a line $d_{r,\theta}$ and $\mbox{Im} h$ have a whole segment in common.
	Observing that, if the line $d_{r,\theta}$ does not intersect $\mbox{Im} h$, then it does not intersect $\mbox{Im} f$ either, since $\mbox{Im} h$ is the boundary of the convex hull of $\mbox{Im} f$, it remains to look at the two following cases for comparing $N_h(r,\theta)$ and $N_f(r,\theta)$.
\begin{itemize}
	\item If the line $d_{r,\theta}$ intersects $\mbox{Im} h$ at a single point, then this point belongs to $\mbox{Im} f$.
	\item If the line $d_{r,\theta}$ intersects $\mbox{Im} h$ at exactly two points, then $\mbox{Im} f$ crosses the line. If $\mbox{Im} f$ were located on one side of the line, $K$ were not the convex hull. Since $f$ is a closed curve, $\mbox{Im} f$ crosses the line at least twice.
\end{itemize}
So, $N_h(r,\theta)\leq N_f(r,\theta)$ almost everywhere, that is $\L(h)\leq \L(f)=L$.

	\item Now, observe that 	$ \mbox{Im} h\subset \mbox{Im} f$. Indeed, otherwise, there exists $t\in[0,1]$ such that $h(t)\notin \mbox{Im} f$, which means that $d(g(t),\mbox{Im} f)>d(g(t),K)$. By continuity this implies that $d(g(s),\mbox{Im} f)>d(g(s),K)$ for all $s$ in a non-empty open set and one obtains that $\Delta(h)<\Delta(f)$.
	By optimality of $f$, this is not possible since  $\L(h)\le L$.

	\item Since $ \mbox{Im} h\subset \mbox{Im} f$ and $\L(f)=L$, to obtain that $\mbox{Im} f$ is the circle with  center $(0,0)$ and radius $L/2\pi$, it remains to show that $\mbox{Im} h$ is the circle with  center $(0,0)$ and radius $L/2\pi$.  Let $\theta\in[0,1]$ and let $A_\theta:\R^2\to\R^2$ denote the  rotation with center $(0,0)$ and angle $2\pi\theta$.
We set $ h_\theta(t)=\pi_{A_\theta(K)}(g(t))$, for every $t\in[0,1]$. Since $ h_\theta(t)=A_\theta\circ \pi_K
		( A_\theta^{-1}(g(t)))= A_\theta\circ\pi_K( g(t-\theta))$, $ h_\theta$ is a curve with same length as $h$.
Moreover, $A_\theta(X)$ has the same distribution as $X$, so that
	\begin{align*}
	\E\left[\|X-\pi_{A_\theta(K)}(X)\|^2\right]&=\E\left[\|A_\theta(X)-\pi_{A_\theta(K)}(A_\theta(X))\|^2\right]\\
	&=\E\left[\|A_\theta(X)-A_\theta(\pi_{K}(X))\|^2\right]\\
	&=\E\left[\|X-\pi_{K}(X)\|^2\right].
	\end{align*}
	By strict convexity, we deduce from this equality that, 
	if  $\P\left(\pi_{A_\theta(K)}(X)\not=\pi_K(X)\right)>0$, then
	$$ \E\left[\|X-(\pi_K(X)+\pi_{A_\theta(K)}(X))/2\|^2\right] < \Delta(h).$$
	Since the random variable  $(\pi_K(X)+\pi_{A_\theta(K)}(X))/2$ takes its values in the image of the  curve $(h+ h_\theta)/2$ with length  smaller than $\L(h)\le L$, that is not possible. Consequently, $\pi_{A_\theta(K)}(X)=\pi_K(X)$ almost surely. In other words,  
	$\pi_{A_\theta(K)}(g(t))=h(t)$ for almost every $t\in[0,1]$, and, thus, by continuity, $h_\theta(t)=h(t)$ for every $t\in[0,1]$.
	For $t\in[0,1]$, let $\theta=t$. We have $h(t)=h_t(t)=A_t\circ \pi_K(g(0))=A_t( h(0) )$. 
	Since $\Delta(f)=\Delta(h)$ and $\L(h)\le L$, $\L(h)=L$.
	Hence, $\mbox{Im} h$ is the circle with center $(0,0)$ and  radius $L/2\pi$.

	\end{itemize}
\end{proof}

\begin{rem}
	Observe that  radial symmetry of a distribution is not sufficient to guarantee that a given circle will be a constrained principal curve for this distribution. Let us exhibit two counterexamples.
	\begin{itemize}
		\item Let $p>0$ and let $\mathcal U$ denote the uniform distribution on the unit circle. Consider a random variable $X$ taking its values in $\R^2$, distributed according to the mixture distribution $$p\delta_{(0,0)}+(1-p)\mathcal U,$$ where $\delta_{(0,0)}$ stands for the Dirac mass at the origin $(0,0)$. Then, for every circle with center $(0,0)$ and radius $r\in(0,1]$, because of the atom at the origin, the projection of $X$ on the circle is not unique almost surely, which implies, thanks to Proposition \ref{prop:hatXneg}, that none of these circles may be a constrained principal curve for $X$. 
	
		\item We consider the case where $X$ is a standard Gaussian random vector in $\R^2$. Lemma \ref{lem:noneq} ensures that  the circle with center $(0,0)$ and radius $\E[\|X\|]=\sqrt{\pi/2}$ cannot be a constrained principal curve for $X$ because it   is self-consistent.
	\end{itemize}
	
\end{rem}

\bibliography{courbiblio}

\bibliographystyle{plainnat}

\Addresses
\end{document}